\numberwithin{equation}{section}
\DeclareFontFamily{OT1}{rsfs}{}
\DeclareFontShape{OT1}{rsfs}{n}{it}{<-> rsfs10}{}
\DeclareMathAlphabet{\mathscr}{OT1}{rsfs}{n}{it}
\theoremstyle{plain}
\newtheorem{theorem}{Theorem}[section]
\newtheorem{proposition}[theorem]{Proposition}
\newtheorem{lemma}[theorem]{Lemma}
\newtheorem{corollary}[theorem]{Corollary}
\newtheorem{conjecture}[theorem]{Conjecture}
\newtheorem{heuristic}[theorem]{Heuristic}
\theoremstyle{definition}
\newtheorem{definition}[theorem]{Definition}
\newtheorem{remark}[theorem]{Remark}
\renewcommand\P{\mathbb{P}}
\newcommand\E{\mathbb{E}}
\newcommand\R{\mathbb{R}}
\newcommand\Z{\mathbb{Z}}
\newcommand\N{\mathbb{N}}
\renewcommand\a{\mathbf{a}}
\renewcommand\b{\mathbf{b}}
\renewcommand\j{\mathbf{j}}
\renewcommand\k{\mathbf{k}}
\renewcommand\v{\mathbf{v}}
\renewcommand\t{\mathbf{t}}
\renewcommand\r{\mathbf{r}}
\renewcommand\l{\mathbf{l}}
\newcommand\X{\mathbf{X}}
\newcommand\Y{\mathbf{Y}}
\newcommand\Unif{\mathbf{Unif}}
\newcommand\Geom{\mathbf{Geom}}
\newcommand\Log{\mathbf{Log}}
\newcommand\Pascal{\mathbf{Pascal}}
\newcommand\Syrac{\mathbf{Syrac}}
\newcommand\Hold{\mathbf{Hold}}
\newcommand\C{\mathbb{C}}
\newcommand\Q{\mathbb{Q}}
\newcommand\TV{{\operatorname{TV}}}
\newcommand\Aff{{\operatorname{Aff}}}
\newcommand\Pass{{\operatorname{Pass}}}
\newcommand\dist{{\operatorname{dist}}}
\newcommand\Col{{\operatorname{Col}}}
\newcommand\Syr{{\operatorname{Syr}}}
\newcommand\Osc{{\operatorname{Osc}}}
\newcommand\eps{\varepsilon}
\renewcommand{\mod}{\bmod}
\begin{document}

\title[Collatz orbits attain almost bounded values]{Almost all orbits of the Collatz map attain almost bounded values}

\author{Terence Tao}
\address{UCLA Department of Mathematics, Los Angeles, CA 90095-1555.}
\email{tao@math.ucla.edu}


\subjclass[2010]{37P99}

\begin{abstract}  Define the \emph{Collatz map} $\Col \colon \N+1 \to \N+1$ on the positive integers $\N+1 = \{1,2,3,\dots\}$ by setting $\Col(N)$ equal to $3N+1$ when $N$ is odd and $N/2$ when $N$ is even, and let $\Col_{\min}(N) \coloneqq \inf_{n \in \N} \Col^n(N)$ denote the minimal element of the Collatz orbit $N, \Col(N), \Col^2(N), \dots$. The infamous \emph{Collatz conjecture} asserts that $\Col_{\min}(N)=1$ for all $N \in \N+1$.  Previously, it was shown by Korec that for any $\theta > \frac{\log 3}{\log 4} \approx 0.7924$, one has $\Col_{\min}(N) \leq N^\theta$ for almost all $N \in \N+1$ (in the sense of natural density).  In this paper we show that for \emph{any} function $f \colon \N+1 \to \R$ with $\lim_{N \to \infty} f(N)=+\infty$, one has $\Col_{\min}(N) \leq f(N)$ for almost all $N \in \N+1$ (in the sense of logarithmic density).  Our proof proceeds by establishing a stabilisation property for a certain first passage random variable associated with the Collatz iteration (or more precisely, the closely related Syracuse iteration), which in turn follows from estimation of the characteristic function of a certain skew random walk on a $3$-adic cyclic group $\Z/3^n\Z$ at high frequencies.  This estimation is achieved by studying how a certain two-dimensional renewal process interacts with a union of triangles associated to a given frequency.
\end{abstract}

\maketitle


\section{Introduction}

\subsection{Statement of main result}

Let $\N \coloneqq \{0,1,2,\dots\}$ denote the natural numbers, so that $\N+1 = \{1,2,3,\dots\}$ are the positive integers.  The \emph{Collatz map} $\Col \colon \N+1 \to \N+1$ is defined by setting $\Col(N) \coloneqq 3N+1$ when $N$ is odd and $\Col(N) \coloneqq N/2$ when $N$ is even.  For any $N \in \N+1$, let $\Col_{\min}(N) \coloneqq \min \Col^\N(N) = \inf_{n \in \N} \Col^n(N)$ denote the minimal element of the Collatz orbit $\Col^\N(N) \coloneqq \{ N, \Col(N), \Col^2(N), \dots\}$.    We have the infamous \emph{Collatz conjecture} (also known as the $3x+1$ conjecture):

\begin{conjecture}[Collatz conjecture]\label{collatz}  We have $\Col_{\min}(N)=1$ for all $N \in \N+1$.
\end{conjecture}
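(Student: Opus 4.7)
\emph{Plan.} The conjecture is equivalent to the conjunction of two structural claims about the Collatz dynamics on $\N+1$: (a) every forward orbit is bounded, i.e.\ $\sup_n \Col^n(N) < \infty$ for every $N$; and (b) the only periodic orbit is the trivial cycle $\{1,2,4\}$. Claims (a) and (b) together force every trajectory to eventually enter the trivial cycle and hence satisfy $\Col_{\min}(N)=1$. I would therefore split the proof into these two halves and attack them by completely different methods.

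\emph{Step 1: Excluding nontrivial cycles.} Work with the Syracuse acceleration $T(N) \coloneqq (3N+1)/2^{v_2(3N+1)}$ acting on odd integers. A hypothetical $k$-cycle with total $2$-adic valuation $s$ yields the Diophantine identity $N(2^s - 3^k) = \sum_{i=0}^{k-1} 3^{k-1-i} 2^{s_i}$ for some composition $s = s_0+\cdots+s_{k-1}$. Baker's theorem on linear forms in logarithms, together with the refinements of Steiner, Simons, and de~Weger, yields an explicit lower bound of the form $|s\log 2 - k\log 3| \gg k^{-c}$, which when combined with the cycle identity rules out all cycles of length below a computable threshold $k_0$. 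Combining this with the numerical verification of the conjecture up to $\approx 2^{68}$ should close out the finitely many surviving cases. This step, while technical, is within reach of existing technology.

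\emph{Step 2: Excluding divergent trajectories.} This is the deep half. The plan is to upgrade the main theorem of this paper from a statement of logarithmic density $1$ to a statement holding for every $N$. Concretely, I would try to show that for every odd $N$, the Syracuse iterate $T^n(N)$ drops below $N^{1-\delta}$ for some absolute $\delta>0$ within $n = O(\log N)$ steps, with no exceptional set. Given such a pointwise drop, iterating yields $\Col_{\min}(N)=O(1)$ for every $N$, and combined with Step~1 and a finite check this forces $\Col_{\min}(N)=1$. The natural route is to replace the averaged $3$-adic Fourier/renewal analysis, whose conclusion is a stabilisation of a first-passage distribution in total variation, with a pointwise equidistribution statement for the skew walk on $\Z/3^n\Z$ \emph{uniform in the initial residue}. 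This would amount to a Diophantine non-concentration estimate for the sequence $\{3^k/2^{s_k(N)}\}$ that is valid for every $N$ rather than merely on average.

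\emph{Main obstacle.} Step~2 is precisely where every known approach, including the Fourier-analytic strategy announced in the abstract of this paper, fundamentally stalls. The estimation of characteristic functions at high frequencies and the $L^2$-type renewal bounds furnish control over averages of the first-passage variable, and intrinsically tolerate an exceptional set of density zero; there is no mechanism within this framework to suppress that set altogether. Removing it would require an arithmetic input, presumably of transcendence-theoretic origin and stronger than what Baker-type methods currently supply, certifying that no individual $N$ can conspire with the $2$-adic valuations of $3N+1, 3T(N)+1, \ldots$ to prevent the drop. In the absence of such an input the plan cannot be completed, and producing one would resolve a problem that has been open since Collatz posed it in 1937.
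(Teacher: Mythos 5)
You have correctly identified that the statement labelled \emph{Collatz conjecture} is precisely that---a conjecture, not a theorem of this paper. The paper offers no proof of it, and indeed its main result (Theorem \ref{main}) is explicitly a weaker, density-type statement. Your ``proof proposal'' is therefore not a proof at all but an honest assessment of what a proof would require, and that assessment is accurate: Step~1 (excluding nontrivial cycles) is a finite-computation-plus-Baker's-theorem strategy that has been partially carried out in the literature for cycles up to a certain length, but a complete resolution is not known; Step~2 (excluding divergent trajectories) is the genuinely open half, and your observation that the Fourier/renewal machinery in this paper is intrinsically an averaged argument tolerating a null exceptional set---with no mechanism to suppress that set pointwise---is exactly right. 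The paper itself makes this limitation explicit in the remark following Theorem \ref{main}, noting that even the bounded-constant version of the almost-everywhere statement is ``likely to be almost as hard to settle as the full Collatz conjecture.''

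So: there is a genuine gap, you have named it concretely, and you have correctly diagnosed why no argument along the lines of this paper (or any currently known technique) closes it. The appropriate conclusion is that the statement remains a conjecture, and your write-up should be read as a lucid explanation of the obstruction rather than as a candidate proof.
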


We refer the reader to \cite{lag}, \cite{chamber} for extensive surveys and historical discussion of this conjecture.

While the full resolution of Conjecture \ref{collatz} remains well beyond reach of current methods, some partial results are known.
Numerical computation has verified $\Col_{\min}(N)=1$ for all $N \leq 5.78 \times 10^{18}$ \cite{olive}, for all $N \leq 10^{20}$ \cite{eric}, and most recently for all $N \leq 2^{68} \approx 2.95 \times 10^{20}$ \cite{barina}, while Krasikov and Lagarias \cite{kl} showed that
$$ \# \{ N \in \N+1 \cap [1,x]: \Col_{\min}(N) = 1 \} \gg x^{0.84}$$
for all sufficiently large $x$, where $\# E$ denotes the cardinality of a finite set $E$, and our conventions for asymptotic notation are set out in Section \ref{notation-sec}.  In this paper we will focus on a different type of partial result, in which one establishes upper bounds on the minimal orbit value $\Col_{\min}(N)$ for ``almost all'' $N \in \N+1$.  For technical reasons, the notion of ``almost all'' that we will use here is based on logarithmic density, which has better approximate multiplicative invariance properties than the more familiar notion of natural density (see \cite{tao:chowla} for a related phenomenon in a more number-theoretic context).  Due to the highly probabilistic nature of the arguments in this paper, we will define logarithmic density using the language of probability theory.

\begin{definition}[Almost all]  Given a finite non-empty subset $R$ of $\N+1$, we define\footnote{In this paper all random variables will be denoted by boldface symbols, to distinguish them from purely deterministic quantities that will be denoted by non-boldface symbols.  When it is only the distribution of the random variable that is important, we will use multi-character boldface symbols such as $\Log$, $\Unif$, or $\Geom$ to denote the random variable, but when the dependence or independence properties of the random variable are also relevant, we shall usually use single-character boldface symbols such as $\a$ or $\j$ instead.}  $\Log(R)$ to be a random variable taking values in $R$ with the logarithmically uniform distribution
$$ \P( \Log(R) \in A ) = \frac{\sum_{N \in A \cap R} \frac{1}{N}}{\sum_{N \in R} \frac{1}{N}}$$
for all $A \subset \N+1$.  The \emph{logarithmic density} of a set $A \subset \N+1$ is then defined to be $\lim_{x \to \infty} \P( \Log(\N+1 \cap [1,x]) \in A )$, provided that the limit exists.  We say that a property $P(N)$ holds for \emph{almost all} $N \in \N+1$ if $P(N)$ holds for $N$ in a subset of $\N+1$ of logarithmic density $1$, or equivalently if
$$ \lim_{x \to \infty} \P( P( \Log( \N+1 \cap [1,x] ) ) ) = 1.$$
\end{definition}

In Terras \cite{terras} (and independently Everett \cite{everett}) it was shown that $\Col_{\min}(N) < N$ for almost all $N$.  This was improved by Allouche \cite{allouche} to $\Col_{\min}(N) < N^\theta$ for almost all $N$, and any fixed constant $\theta>\frac{3}{2} - \frac{\log 3}{\log 2} \approx 0.869$; the range of $\theta$ was later extended to $\theta > \frac{\log 3}{\log 4} \approx 0.7924$ by Korec \cite{korec}.  (Indeed, in these results one can use natural density instead of logarithmic density to define ``almost all''.)  It is tempting to try to iterate these results to lower the value of $\theta$ further.  However, one runs into the difficulty that the uniform (or logarithmic) measure does not enjoy any invariance properties with respect to the Collatz map: in particular, even if it is true that $\Col_{\min}(N) < x^\theta$ for almost all $N \in [1,x]$, and $\Col_{\min}(N') \leq x^{\theta^2}$ for almost all $N' \in [1, x^\theta]$, the two claims cannot be immediately concatenated to imply that $\Col_{\min}(N) \leq x^{\theta^2}$ for almost all $N \in [1,x]$, since the Collatz iteration may send almost all of $[1,x]$ into a very sparse subset of $[1,x^\theta]$, and in particular into the exceptional set of the latter claim $\Col_{\min}(N') \leq x^{\theta^2}$.

Nevertheless, in this paper we show that it is possible to locate an alternate probability measure (or more precisely, a family of probability measures) on the natural numbers with enough invariance properties that an iterative argument does become fruitful.  More precisely, the main result of this paper is the following improvement of these ``almost all'' results.

\begin{theorem}[Almost all Collatz orbits attain almost bounded values]\label{main}  Let $f \colon \N + 1 \to \R$ be any function with $\lim_{N \to\infty} f(N) = +\infty$.  Then one has $\Col_{\min}(N) < f(N)$ for almost all $N \in \N+1$ (in the sense of logarithmic density).
\end{theorem}

Thus for instance one has $\Col_{\min}(N) < \log\log\log\log N$ for almost all $N$.

\begin{remark}  One could ask whether it is possible to sharpen the conclusion of Theorem \ref{main} further, to assert that there is an absolute constant $C_0$ such that $\Col_{\min}(N) \leq C_0$ for almost all $N \in \N+1$.  However this question is likely to be almost as hard to settle as the full Collatz conjecture, and out of reach of the methods of this paper.  Indeed, suppose for any given $C_0$ that there existed an orbit $\Col^\N(N_0) = \{N_0, \Col(N_0), \Col^2(N_0),\dots\}$ that never dropped below $C_0$ (this is the case if there are infinitely many periodic orbits, or if there is at least one unbounded orbit).  Then probabilistic heuristics (such as \eqref{snn} below) suggest that for a positive density set of $N \in \N+1$, the orbit $\Col^\N(N) = \{N, \Col(N), \Col^2(N), \dots\}$ should encounter one of the elements $\Col^n(N_0)$ of the orbit of $N_0$ before going below $C_0$, and then the orbit of $N$ will never dip below $C_0$.  However, Theorem \ref{main} is easily seen\footnote{Indeed, if the latter assertion failed, then there exists a $\delta$ such that the set $\{ N \in \N+1: \Col_{\min}(N) \leq C\}$ has lower logarithmic density less than $1-\delta$ for every $C$.  A routine diagonalisation argument then shows that there exists a function $f$ growing to infinity such that $\{ N \in \N+1: \Col_{\min}(N) \leq f(N)\}$ has lower logarithmic density at most $1-\delta$, contradicting Theorem \ref{main}.} to be equivalent to the assertion that for any $\delta>0$, there exists a constant $C_\delta$ such that $\Col_{\min}(N) \leq C_\delta$ for all $N$ in a subset of $\N+1$ of lower logarithmic density (in which the limit in the definition of logarithmic density is replaced by the limit inferior) at least $1-\delta$; in fact (see Theorem \ref{main-alt}) our arguments give a constant of the form $C_\delta \ll \exp(\delta^{-O(1)})$, and it may be possible to refine the subset so that the logarithmic density (as opposed to merely the lower logarithmic density) exists and is at least $1-\delta$.  In particular\footnote{We thank Ben Green for this observation.}, it is possible in principle that a sufficiently explicit version of the arguments here, when combined with numerical verification of the Collatz conjecture, can be used to show that the Collatz conjecture holds for a set of $N$ of positive logarithmic density.  Also, it is plausible that some refinement of the arguments below will allow one to replace logarithmic density by natural density in the definition of ``almost all''.
\end{remark}

\subsection{Syracuse formulation}

We now discuss the methods of proof of Theorem \ref{main}.  It is convenient to replace the Collatz map $\Col \colon \N+1 \to \N+1$ with a slightly more tractable acceleration $N \mapsto \Col^{f(N)}(N)$ of that map.  One common instance of such an acceleration in the literature is the map $\Col_2 \colon \N+1 \to \N+1$, defined by setting $\Col_2(N) \coloneqq \Col^2(N) = \frac{3N+1}{2}$ when $N$ is odd and $\Col_2(N) \coloneqq \frac{N}{2}$ when $N$ is even.  Each iterate of the map $\Col_2$ performs exactly one division by $2$, and for this reason $\Col_2$ is a particularly convenient choice of map when performing ``$2$-adic'' analysis of the Collatz iteration.  It is easy to see that $\Col_{\min}(N) = (\Col_2)_{\min}(N)$ for all $N \in \N+1$, so all the results in this paper concerning $\Col$ may be equivalently reformulated using $\Col_2$.  The triple iterate $\Col^3$ was also recently proposed as an acceleration in \cite{carletti}.  However, the methods in this paper will rely instead on ``$3$-adic'' analysis, and it will be preferable to use an acceleration of the Collatz map (first appearing to the author's knowledge in \cite{crandall}) which performs exactly one multiplication by $3$ per iteration.  More precisely, let $2\N+1 = \{1,3,5,\dots\}$ denote the odd natural numbers, and define the \emph{Syracuse map} $\Syr \colon 2\N+1 \to 2\N+1$ (OEIS A075677) to be the largest odd number dividing $3N+1$; thus for instance
$$ \Syr(1)=1; \quad \Syr(3) = 5; \quad \Syr(5) = 1; \quad \Syr(7) = 11.$$
Equivalently, one can write
\begin{equation}\label{S-def}
 \Syr(N) = \Col^{\nu_2(3N+1)+1}(N) = \Aff_{\nu_2(3N+1)}(N)
\end{equation}
where for each positive integer $a \in \N+1$, $\Aff_a\colon \R \to \R$ denotes the affine map
$$ \Aff_a(x) \coloneqq \frac{3x+1}{2^a}$$
and for each integer $M$ and each prime $p$, the \emph{$p$-valuation} $\nu_p(M)$ of $M$ is defined as the largest natural number $a$ such that $p^a$ divides $M$ (with the convention $\nu_p(0) = +\infty$).  (Note that $\nu_2(3N+1)$ is always a positive integer when $N$ is odd.)  For any $N \in 2\N+1$, let $\Syr_{\min}(N) \coloneqq \min \Syr^\N(N)$ be the minimal element of the Syracuse orbit
$$\Syr^\N(N) \coloneqq \{ N, \Syr(N), \Syr^2(N), \dots\}.$$
This Syracuse orbit $\Syr^\N(N)$ is nothing more than the odd elements of the corresponding Collatz orbit $\Col^\N(N)$, and from this observation it is easy to verify the identity
\begin{equation}\label{c-ident}
 \Col_{\min}(N) = \Syr_{\min}( N / 2^{\nu_2(N)} )
\end{equation}
for any $N \in \N+1$.  Thus, the Collatz conjecture can be equivalently rephrased as

\begin{conjecture}[Collatz conjecture, Syracuse formulation]\label{collatz-syr}  We have $\Syr_{\min}(N)=1$ for all $N \in 2\N+1$.
\end{conjecture}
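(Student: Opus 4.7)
The statement is the full Collatz conjecture in Syracuse form, which is open; any proposal here is necessarily speculative, so I describe how I would attempt to combine the framework of this paper with classical tools rather than claim a complete argument. The assertion $\Syr_{\min}(N) = 1$ for every $N \in 2\N+1$ splits naturally into two subclaims: \textbf{(A)} every Syracuse orbit is bounded, i.e.\ there are no divergent trajectories; and \textbf{(B)} every bounded Syracuse orbit eventually reaches $1$. Given (A) and (B), the conjecture follows immediately: a bounded orbit must revisit some value and hence be eventually periodic, and (B) asserts the only periodic orbit is the fixed point at $1$. The plan is to attack these subclaims separately, supplementing (B) with the numerical verification $\Syr_{\min}(N) = 1$ for $N \le 10^{20}$ to reduce the search for potential nontrivial cycles to a finite range.

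For subclaim (A), the plan is to upgrade Theorem \ref{main} from logarithmic density one to every $N$. Theorem \ref{main} gives $\Syr_{\min}(N) < f(N)$ for almost all $N$, via a stabilisation property of a first-passage random variable and estimation of characters of a skew random walk on $\Z/3^n\Z$. To eliminate the exceptional set, I would iterate along orbits: if $N, \Syr(N), \Syr^2(N), \ldots$ were unbounded, then $\log \Syr^n(N)$ would define a biased walk with negative drift $\log 3 - (\log 2) \E[a]$ under the geometric distribution of $a = \nu_2(3\cdot + 1)$, which a divergent orbit must persistently violate. This would contradict Theorem \ref{main} applied along the orbit, \emph{provided} one can rerandomise at each step, i.e.\ show that the $2$-adic valuations of successive iterates decorrelate. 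The rerandomisation would come from promoting the paper's distributional equidistribution of Syracuse iterates on $\Z/3^n\Z$ to a pointwise statement; the difficulty is that the character bounds produced here save only a fractional power, which is enough for averages but not for the worst starting point.

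For subclaim (B), a hypothetical nontrivial Syracuse cycle $N_1, \ldots, N_k = N_1$ with valuations $a_1, \ldots, a_k$ and $A \coloneqq a_1 + \cdots + a_k$ satisfies $N_1 (2^A - 3^k) = \sum_{i=1}^k 3^{k-i} 2^{a_1 + \cdots + a_{i-1}}$, forcing $2^A/3^k$ to be a specific rational extraordinarily close to $1$. Baker's theorem on linear forms in logarithms gives $|A \log 2 - k \log 3| \gg k^{-C}$, which combined with the numerical lower bound $N_1 \ge 10^{20}$ bounds $k$ from above. The remaining finite range would be ruled out by an explicit search over admissible valuation tuples $(a_1, \ldots, a_k)$; the $3$-adic renewal-process estimates of this paper could plausibly slash the number of admissible tuples, since a cycle imposes strong congruence conditions modulo high powers of $3$ that the renewal model can test against.

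The main obstacle, and the reason this plan is unlikely to succeed as stated, is that the methods of this paper are intrinsically measure-theoretic: they produce an exceptional set that is null in logarithmic density but nonempty, with no apparent mechanism within the renewal and character analysis to force that set to be empty. I expect a genuine proof of (A) would require a new Lyapunov function or conserved quantity for the Syracuse dynamics that forces descent pointwise, and a proof of (B) would require either a transcendence input substantially sharper than currently available or a genuinely new combinatorial constraint on admissible valuation sequences in a cycle. Neither is in reach of the present framework.
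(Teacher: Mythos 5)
The statement in question is Conjecture \ref{collatz-syr}, which is the full Collatz conjecture recast through the identity $\Col_{\min}(N) = \Syr_{\min}(N/2^{\nu_2(N)})$; the paper does not prove it and does not attempt to. The whole point of the paper is the much weaker Theorem \ref{main-syr}, which controls $\Syr_{\min}(N)$ only for almost all $N$ in logarithmic density, and the remark following Theorem \ref{main} explicitly cautions that even the modest strengthening to a uniform bound $\Col_{\min}(N) \leq C_0$ for almost all $N$ is likely as hard as the full conjecture, because a single high-lying periodic or unbounded orbit would be expected to attract a positive-density set of starting points. You correctly identify the status of the statement and honestly label your proposal as a speculative outline rather than a proof, so there is no hidden gap for me to expose; the purpose of these remarks is only to confirm that your assessment matches the paper's.

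Your decomposition into (A) no divergent orbits and (B) no nontrivial cycles is standard, as is the route to (B) via Baker's theorem on linear forms in logarithms combined with numerical verification to bound hypothetical cycle lengths. Your diagnosis of why the paper's machinery cannot deliver (A) is precisely right: Propositions \ref{rach}, \ref{tv-bound}, and \ref{f-decay} concern the distribution of Syracuse iterates under a logarithmically uniform measure, and the output, a total-variation bound of size $O(2^{-cn})$ or a Fourier-decay bound of size $O_A(n^{-A})$, inherently tolerates a sparse but nonempty exceptional set. Rerandomising along a single orbit, as you suggest, would require pointwise control of the distribution of $\vec a^{(n)}(N)$ uniformly over $N$, which is a categorically stronger input than the averaged character estimate of Proposition \ref{f-decay} supplies. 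Nothing in the paper points to a Lyapunov function or conserved quantity forcing pointwise descent, and the author's framing agrees with your conclusion that neither (A) nor (B) is within reach of this framework.
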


We may similarly reformulate Theorem \ref{main} in terms of the Syracuse map.  We say that a property $P(N)$ holds for \emph{almost all} $N \in 2\N+1$  if
$$ \lim_{x \to \infty} \P( P(\Log( 2\N+1 \cap [1,x] ) )  )= 1,$$
or equivalently if $P(N)$ holds for a set of odd natural numbers of logarithmic density $1/2$.  Theorem \ref{main} is then equivalent to

\begin{theorem}[Almost all Syracuse orbits attain almost bounded values]\label{main-syr}  Let $f\colon 2\N + 1 \to \R$ be a function with $\lim_{N \to\infty} f(N) = +\infty$.  Then one has $\Syr_{\min}(N) < f(N)$ for almost all $N \in 2\N+1$.
\end{theorem}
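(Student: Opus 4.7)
The plan is to reduce Theorem~\ref{main-syr} to the quantitative statement that for every $\eps > 0$ there is a constant $C_\eps$ with
$$ \P\bigl( \Syr_{\min}(\Log(2\N+1 \cap [1,x])) \leq C_\eps \bigr) \geq 1 - \eps $$
for all sufficiently large $x$; a diagonalisation over $\eps \downarrow 0$ that exploits the hypothesis $f(N) \to \infty$ then yields the $f$-form of the conclusion.

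To prove this quantitative claim I would set up an iterative contraction scheme. Fix $\alpha \in (3/4,1)$ close to $3/4$. Heuristically the Syracuse map contracts by a factor $3/2^a$ per step, where $a = \nu_2(3N+1)$ behaves like a $\Geom(1/2)$ variable of mean $2$, so after $O(\log x)$ steps a typical orbit should have dropped from scale $x$ to scale $x^\alpha$. The key technical claim I would target is a \emph{stabilisation property}: if $N$ is distributed as $\Log(2\N+1\cap[1,x])$ and $T = T(N)$ is the first passage time at which $\Syr^T(N) \leq x^\alpha$, then the law of $\Syr^T(N)$ is within total-variation distance $\eps_0$ of $\Log(2\N+1\cap[1,x^\alpha])$, for some absolute small $\eps_0 > 0$. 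Granted such a single-step stabilisation, iterating it $O(\log\log x)$ times and union-bounding the total-variation losses brings the scale down from $x$ to $O(1)$ with probability $1 - o(1)$, producing the desired uniform drop.

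The route to stabilisation I would take is Fourier-analytic. Writing $\Syr^n(N) = (3^n N + R_n)/2^{A_n}$ with $A_n = a_1+\cdots+a_n$ and $R_n$ determined entirely by $(a_1,\ldots,a_n)$, the residue $\Syr^n(N) \bmod 3^n$ is governed by a skew random walk on $\Z/3^n\Z$ driven by i.i.d.\ $\Geom(1/2)$ increments. Since the logarithmic distribution on a $2$-adic interval factorises, approximately, into an independent pair of a $3^n$-residue and a multiplicative magnitude, a Plancherel / Chinese-remainder argument reduces the stabilisation step to showing that the distribution $\mu_n$ of this skew walk is close to uniform on $\Z/3^n\Z$ for $n$ of order $(1-\alpha)\log_3 x$. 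That closeness in turn follows from a characteristic function bound of the shape $|\widehat{\mu}_n(\xi)| \leq 3^{-c n}$ uniformly across all nonzero $\xi \in \Z/3^n\Z$.

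The hardest step, and the genuine technical heart of the argument, will be proving this Fourier bound at high frequencies — frequencies $\xi$ with small $3$-adic valuation, where any single step of the walk contributes a phase factor too close to $1$ in modulus to yield decay on its own. My plan here is to view the trajectory $(k, A_k)_{k=0}^n$ as a two-dimensional renewal process and, for each such $\xi$, to locate a union of triangles in $(k, A_k)$-space on which one has a quantitative lower bound for $|1 - \phi_\xi|$, where $\phi_\xi$ is the local Geometric characteristic factor. The remaining task is then combinatorial/geometric: show that a lattice path with i.i.d.\ Geometric vertical increments traverses sufficiently many such triangles with overwhelming probability, yielding enough independent decrement factors which, when multiplied, produce the required exponential gain. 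This renewal-versus-triangles estimate is where the main difficulty concentrates; once it is secured, the iterative contraction scheme above delivers Theorem~\ref{main-syr}.
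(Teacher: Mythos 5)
Your high-level architecture matches the paper's — reduce to a quantitative claim, iterate a ``first passage'' step, and obtain the key input from a Fourier bound on a $3$-adic skew walk proved via a two-dimensional renewal process meeting a union of triangles. But the single step you propose to iterate is false, and correcting it is the main conceptual difficulty of the argument.

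Your stabilisation claim asserts that, starting from $\mathbf{N} \equiv \Log(2\N+1\cap[1,x])$, the first passage location $\Syr^T(\mathbf{N})$ below $x^\alpha$ is within total variation $\eps_0$ of $\Log(2\N+1\cap[1,x^\alpha])$. This cannot be right: $\Syr(M)=(3M+1)/2^{\nu_2(3M+1)}$ is always coprime to $3$, so the first passage location is never divisible by $3$, whereas $\Log(2\N+1\cap[1,x^\alpha])$ places a proportion $\approx 1/3$ of its mass on multiples of $3$. Hence $d_{\TV}$ between these two laws is bounded below by $\geq 2/3$ uniformly in $x$, and no choice of small $\eps_0$ is available. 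More generally, the law of $\Syr^n(\mathbf{N}) \bmod 3^k$ converges to the Syracuse distribution $\Syrac(\Z/3^k\Z)$ introduced in \eqref{syrac-def}, which is highly non-uniform at coarse $3$-adic scales; only at \emph{fine} $3$-adic scales (cosets of $3^m\Z/3^n\Z$ for $m$ large) does one get equidistribution, and that is exactly the content of Proposition~\ref{tv-bound}. The paper sidesteps the false claim by not comparing $\Pass_x(\mathbf{N}_{x^{\alpha}})$ to any explicit measure at all; instead it compares two first passage distributions across consecutive scales, $\Pass_x(\mathbf{N}_{x^\alpha})$ versus $\Pass_x(\mathbf{N}_{x^{\alpha^2}})$ (Proposition~\ref{transport}), and iterates that self-comparison. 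This ``family of approximately invariant measures'' idea (in the spirit of Bourgain's invariant-measure trick for almost sure global well-posedness) is the genuine new idea and is missing from your plan.

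Two smaller issues. First, even if your stabilisation claim were true with an \emph{absolute} small constant $\eps_0$, summing that loss over $O(\log\log x)$ iterations would give a total error of order $\eps_0 \log\log x \to \infty$, which does not leave a $1-o(1)$ residual; the per-step TV loss has to decay with the scale, as the paper's bound $O(\log^{-c}x)$ in \eqref{tv} does. Second, in the paper's normalisation the iteration is at scales $y, y^{\alpha}, y^{\alpha^2}, \dots$ with $\alpha$ slightly bigger than $1$, i.e.\ a small drop at each step, so that each step stays in the regime governed by Proposition~\ref{rach}; a per-step drop by a factor $\alpha$ near $3/4$ is far too aggressive for the $2$-adic equidistribution input to apply.
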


Indeed, if Theorem \ref{main-syr} holds and $f\colon \N +1 \to \R$ is such that $\lim_{N \to \infty} f(N) = +\infty$, then from \eqref{c-ident} we see that for any $a \in \N$, the set of $N \in \N+1$ with $\nu_2(N) = a$ and $\Col_{\min}( N ) = \Syr_{\min}(N/2^a) < f(N)$ has logarithmic density $2^{-a}$.  Summing over any finite range $0 \leq a \leq a_0$ we obtain a set of logarithmic density $1 - 2^{-a_0}$ on which the claim $\Col_{\min}(N) < f(N)$ holds, and on sending $a_0$ to infinity one obtains Theorem \ref{main}.  The converse implication (which we will not need) is also straightforward and left to the reader.

The iterates $\Syr^n$ of the Syracuse map can be described explicitly as follows.  For any finite tuple $\vec a = (a_1,\dots,a_n) \in (\N+1)^n$ of positive integers, we define the composition $\Aff_{\vec a} = \Aff_{a_1,\dots,a_n}\colon \R \to \R$ to be the affine map
$$ \Aff_{a_1,\dots,a_n}(x) \coloneqq \Aff_{a_n}( \Aff_{a_{n-1}}( \dots (\Aff_{a_1}(x)) \dots )).$$
A brief calculation shows that
\begin{equation}\label{a-form}
\Aff_{a_1,\dots,a_n}(x) = 3^n 2^{-|\vec a|} x + F_n(\vec a)
\end{equation}
where the \emph{size} $|\vec a|$ of a tuple $\vec a$ is defined as
\begin{equation}\label{length}
 |\vec a| \coloneqq a_1 + \dots + a_n,
\end{equation}
and we define the \emph{$n$-Syracuse offset map} $F_n\colon (\N+1)^n \to \Z[\frac{1}{2}]$ to be the function
\begin{equation}\label{fn-def}
\begin{split}
 F_n(\vec a) &\coloneqq \sum_{m=1}^n 3^{n-m} 2^{-a_{[m,n]}} \\
&= 3^{n-1} 2^{-a_{[1,n]}} + 3^{n-2} 2^{-a_{[2,n]}} + \dots + 3^1 2^{-a_{[n-1,n]}} + 2^{-a_n},
\end{split}
\end{equation}
where we adopt the summation notation
\begin{equation}\label{sum}
a_{[j,k]} \coloneqq \sum_{i=j}^k a_i
\end{equation}
for any $1 \leq j \leq k \leq n$, thus for instance $|\vec a| = a_{[1,n]}$.  The $n$-Syracuse offset map $F_n$ takes values in the ring $\Z[\frac{1}{2}] \coloneqq \{ \frac{M}{2^a}: M \in\Z, a \in \N \}$ formed by adjoining $\frac{1}{2}$ to the integers.

By iterating \eqref{S-def} and then using \eqref{a-form}, we conclude that
\begin{equation}\label{s-iter}
\Syr^n(N) = \Aff_{\vec a^{(n)}(N)}(N) = 3^n 2^{-|\vec a^{(n)}(N)|} N + F_n(\vec a^{(n)}(N))
\end{equation}
for any $N \in 2\N+1$ and $n \in \N$, where we define \emph{$n$-Syracuse valuation} $\vec a^{(n)}(N) \in (\N+1)^n$ of $N$ to be the tuple
\begin{equation}\label{van}
 \vec a^{(n)}(N) \coloneqq \left(\nu_2(3N+1), \nu_2(3\Syr(N)+1), \dots, \nu_2(3\Syr^{n-1}(N)+1)\right).
\end{equation}
This tuple is referred to as the \emph{$n$-path} of $N$ in \cite{ks}.

The identity \eqref{s-iter} asserts that $\Syr^n(N)$ is the image of $N$ under a certain affine map $\Aff_{\vec a^{(n)}(N)}$ that is determined by the $n$-Syracuse valuation $\vec a^{(n)}(N)$ of $N$.  This suggests that in order to understand the behaviour of the iterates $\Syr^n(N)$ of a typical large number $N$, one needs to understand the behaviour of $n$-Syracuse valuation $\vec a^{(n)}(N)$, as well as the $n$-Syracuse offset map $F_n$.  For the former, we can gain heuristic insight by observing that for a positive integer $a$, the set of odd natural numbers $N \in 2\N+1$ with $\nu_2(3N+1)=a$ has (logarithmic) relative density $2^{-a}$.  To model this probabilistically, we introduce the following probability distribution:

\begin{definition}[Geometric random variable]\label{geom}  If $\mu > 1$, we use $\Geom(\mu)$ to denote a geometric random variable of mean $\mu$, that is to say $\Geom(\mu)$ takes values in $\N+1$ with
$$ \P( \Geom(\mu) = a ) = \frac{1}{\mu} \left( \frac{\mu-1}{\mu} \right)^{a-1}$$
for all $a \in \N+1$.  We use $\Geom(\mu)^n$ to denote a tuple of $n$ independent, identically distributed (or \emph{iid} for short) copies of $\Geom(\mu)$, and use $\X \equiv \Y$ to denote the assertion that two random variables $\X,\Y$ have the same distribution.  Thus for instance
$$ \P( \a = a ) = 2^{-a}$$
whenever $\a \equiv \Geom(2)$ and $a \in \N+1$, and more generally
$$ \P( \vec \a = \vec a ) = 2^{-|\vec a|}$$
whenever $\vec \a \equiv \Geom(2)^n$ and $\vec a \in (\N+1)^n$ for some $n \in \N$.
\end{definition}

In this paper, the only geometric random variables we will actually use are $\Geom(2)$ and $\Geom(4)$.

We will then be guided by the following heuristic:

\begin{heuristic}[Valuation heuristic]\label{Val}  If $N$ is a ``typical'' large odd natural number, and $n$ is much smaller than $\log N$, then the $n$-Syracuse valuation $\vec a^{(n)}(N)$ behaves like $\Geom(2)^n$.
\end{heuristic}

We can make this heuristic precise as follows.  Given two random variables $\X,\Y$ taking values in the same discrete space $R$, we define the \emph{total variation} $d_\TV(\X,\Y)$ between the two variables to be the total variation of the difference in the probability measures, thus
\begin{equation}\label{tv-1}
 d_\TV(\X,\Y) \coloneqq \sum_{r \in R} |\P( \X = r ) - \P( \Y = r )|.
\end{equation}
Note that
\begin{equation}\label{tv-2}
 \sup_{E \subset R} |\P(\X \in E) - \P(\Y \in E)| \leq d_\TV(\X,\Y) \leq 2 \sup_{E \subset R} |\P(\X \in E) - \P(\Y \in E)|.
\end{equation}
For any finite non-empty set $R$, let $\Unif(R)$ denote a uniformly distributed random variable on $R$.  Then we have the following result, proven in Section \ref{rach-sec}:

\begin{proposition}[Distribution of $n$-Syracuse valuation]\label{rach}  Let $n \in \N$, and let $\mathbf{N}$ be a random variable taking values in $2\N+1$.  Suppose there exist an absolute constant $c_0 > 0$ and some natural number $n' \geq (2+c_0) n$ such that $\mathbf{N} \mod 2^{n'}$ is approximately uniformly distributed in the odd residue classes $(2\Z+1)/2^{n'}\Z$ of $\Z/2^\ell\Z$, in the sense that
\begin{equation}\label{boy}
 d_\TV( \mathbf{N} \mod 2^{n'}, \Unif((2\Z+1)/2^{n'}\Z) ) \ll 2^{-n'}.
\end{equation}
Then
\begin{equation}\label{girl}
 d_\TV( \vec a^{(n)}(\mathbf{N}), \Geom(2)^n ) \ll 2^{-c_1 n}
\end{equation}
for some absolute constant $c_1>0$ (depending on $c_0$).  The implied constants in the asymptotic notation are also permitted to depend on $c_0$.
\end{proposition}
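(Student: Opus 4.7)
The plan is to reduce the claim to a single \emph{fibre lemma} and then transfer it via total variation. The fibre lemma asserts that for every $\vec a \in (\N+1)^n$, the preimage $\{N \in 2\N+1 : \vec a^{(n)}(N) = \vec a\}$ is a single odd residue class modulo $2^{|\vec a|+1}$. I would prove this by induction on $n$: the case $n = 0$ is immediate, and for the step from $\vec a' = (a_1, \dots, a_{n-1})$ to $\vec a = (\vec a', a_n)$, formula \eqref{s-iter} shows that on the fibre of $\vec a'$ (a single class modulo $2^{|\vec a'|+1}$) the map $N \mapsto \Syr^{n-1}(N)$ is affine with slope $3^{n-1} 2^{-|\vec a'|}$. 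Refining $N$ modulo $2^{|\vec a'|+1+a_n} = 2^{|\vec a|+1}$ therefore puts $\Syr^{n-1}(N) \bmod 2^{a_n+1}$ in bijection with the $2^{a_n}$ odd residues modulo $2^{a_n+1}$, exactly one of which, the solution of $3M+1 \equiv 2^{a_n} \pmod{2^{a_n+1}}$, satisfies $\nu_2(3M+1) = a_n$. Hence exactly one refinement lies in the fibre of $\vec a$. The crucial arithmetic input is that $3$ is a unit modulo every power of $2$, which keeps this bijection non-degenerate at every $2$-adic scale.

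An immediate consequence is that if $\mathbf{N}''$ is any odd random integer with $\mathbf{N}'' \bmod 2^m$ uniformly distributed on $2\Z/2^m\Z + 1$, then $\P(\vec a^{(n)}(\mathbf{N}'') = \vec a) = 2^{-|\vec a|}$ for every $\vec a$ with $|\vec a| < m$, matching $\Geom(2)^n$ exactly on this range. Summing, $\P(|\vec a^{(n)}(\mathbf{N}'')| \geq m) = \P(|\Geom(2)^n| \geq m)$; and since $|\Geom(2)^n|$ is a sum of $n$ i.i.d.\ $\Geom(2)$ variables of mean $2$ whose tail is being evaluated at level $\geq (2+c_0) n$, a standard Cram\'er--Chernoff bound gives $\P(|\Geom(2)^n| \geq m) \ll e^{-cn}$ for some $c > 0$ depending only on $c_0$.

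For a general $\mathbf{N}$ satisfying \eqref{boy}, I would then observe that every event $\{\vec a^{(n)}(N) = \vec a\}$ with $|\vec a| < m$, and also the event $\{|\vec a^{(n)}(N)| \geq m\}$, is determined by $N \bmod 2^m$, and together these events partition $2\Z/2^m\Z + 1$ into pairwise disjoint subsets. Applying the triangle inequality on this partition gives
\begin{align*}
\sum_{|\vec a| < m} &\bigl|\P(\vec a^{(n)}(\mathbf{N}) = \vec a) - 2^{-|\vec a|}\bigr| + \bigl|\P(|\vec a^{(n)}(\mathbf{N})| \geq m) - \P(|\Geom(2)^n| \geq m)\bigr| \\
&\qquad \leq d_\TV\bigl(\mathbf{N} \bmod 2^m,\ \Unif(2\Z/2^m\Z + 1)\bigr) \ll 2^{-m},
\end{align*}
which, combined with the previous paragraph, yields $d_\TV(\vec a^{(n)}(\mathbf{N}), \Geom(2)^n) \ll 2^{-m} + e^{-cn} \ll 2^{-c_1 n}$, using $m \geq (2+c_0)n$ and choosing any positive $c_1 < \min(c/\log 2, 2+c_0)$. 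The only substantive ingredient is the fibre lemma; the rest is routine bookkeeping with total variation and large deviations, so I expect no real obstacle.
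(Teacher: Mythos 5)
Your proposal is correct and follows essentially the same route as the paper: the core observation in both is that the fibre $\{N \in 2\N+1 : \vec a^{(n)}(N) = \vec a\}$ is a single odd residue class modulo $2^{|\vec a|+1}$, which the paper extracts from Lemma~\ref{iter-lem} together with \eqref{a-form}, and which makes the probability of each fibre with $|\vec a|<m$ exactly $2^{-|\vec a|}$ under the idealized uniform distribution. The only difference is cosmetic: where the paper separately proves a tail bound (Lemma~\ref{tail}) and then sums per-class errors $O(2^{-m})$ over $\binom{m-1}{n}$ tuples, you coarsen the $2^m$-coset partition to the fibre partition and invoke monotonicity of total variation in one step, which gives a marginally cleaner error of $O(2^{-m})$ before adding the $O(2^{-cn})$ tail from Chernoff; both accountings yield the same $2^{-c_1 n}$ conclusion.
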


Informally, this proposition asserts that Heuristic \ref{Val} is justified whenever $N$ is expected to be uniformly distributed modulo $2^{n'}$ for some $n'$ slightly larger than $2n$.  The hypothesis \eqref{boy} is somewhat stronger than what is actually needed for the conclusion \eqref{girl} to hold, but this formulation of the implication will suffice for our applications.  We will apply this proposition in Section \ref{main-sec}, not to the original logarithmic distribution $\Log(2\N+1 \cap [1,x])$ (which has too heavy a tail near $1$ for the hypothesis \eqref{boy} to apply), but to the variant $\Log( 2\N+1 \cap [y,y^\alpha])$ for some large $y$ and some $\alpha>1$ close to $1$.

\begin{remark}\label{cat}  Another standard way in the literature to justify Heuristic \ref{Val} is to consider the Syracuse dynamics on the $2$-adic integers $\Z_2 \coloneqq \varprojlim_m \Z/2^m\Z$, or more precisely on the odd $2$-adics $2\Z_2+1$.  As the $2$-valuation $\nu_2$ remains well defined on (almost all of) $\Z_2$, one can extend the Syracuse map $\Syr$ to a map on $2\Z_2+1$.  As is well known (see e.g., \cite{lag}), the Haar probability measure on $2\Z_2+1$ is preserved by this map, and if $\mathbf{Haar}(2\Z_2+1)$ is a random element of $2\Z_2+1$ drawn using this measure, then it is not difficult (basically using the $2$-adic analogue of Lemma \ref{iter-lem} below) to show that the random variables $\nu_2( 3\Syr^{j}(\mathbf{Haar}(2\Z_2+1)) + 1)$ for $j \in \N$ are iid copies of $\Geom(2)$. However, we will not use this $2$-adic formalism in this paper.
\end{remark}

In practice, the offset $F_n(\vec a)$ is fairly small (in an Archimedean sense) when $n$ is not too large; indeed, from \eqref{fn-def} we have
\begin{equation}\label{fn-bound}
0 \leq F_n(\vec a) \leq 3^n 2^{-a_n} \leq 3^n
\end{equation}
for any $n \in \N$ and $\vec a \in (\N+1)^n$.  For large $N$, we then conclude from \eqref{s-iter} that we have the heuristic approximation
$$ \Syr^n(N) \approx 3^n 2^{-|\vec a^{(n)}(N)|} N $$
and hence by Heuristic \ref{Val} we expect $\Syr^n(N)$ to behave statistically like
\begin{equation}\label{snusnu}
 \Syr^n(N) \approx 3^n 2^{-|\Geom(2)^n|} N = N \exp( n \log 3 - |\Geom(2)^n| \log 2 )
\end{equation}
if $n$ is much smaller than $\log N$.
One can view the sequence $n \mapsto n \log 3 - |\Geom(2)^n| \log 2$ as a simple random walk on $\R$ with negative drift $\log 3 - 2 \log 2 = \log \frac{3}{4}$.  From the law of large numbers we expect to have
\begin{equation}\label{lln}
|\Geom(2)^n| \approx 2n
\end{equation}
most of the time, thus we are led to the heuristic prediction
\begin{equation}\label{snn}
 \Syr^n(N) \approx (3/4)^n N
\end{equation}
for typical $N$; indeed, from the central limit theorem or the Chernoff bound we in fact expect the refinement
\begin{equation}\label{snn-2}
 \Syr^n(N) = \exp( O(n^{1/2}) ) (3/4)^n N
\end{equation}
for ``typical'' $N$.  In particular, we expect the Syracuse orbit $N, \Syr(N), \Syr^2(N), \dots$ to decay geometrically in time for typical $N$, which underlies the usual heuristic argument supporting the truth of Conjecture \ref{collatz}; see \cite{lw}, \cite{kont} for further discussion.  We remark that the multiplicative inaccuracy of $\exp( O(n^{1/2}) )$ in \eqref{snn-2} is the main reason why we work with logarithmic density instead of natural density in this paper (see also \cite{km}, \cite{ls} for a closely related ``Benford's law'' phenomenon).

\subsection{Reduction to a stablisation property for first passage locations}

Roughly speaking, Proposition \ref{rach} lets one obtain good control on the Syracuse iterates $\Syr^n(N)$ for almost all $N$ and for times $n$ up to $c \log N$ for a small absolute constant $c$.  This already can be used in conjunction with a rigorous version of \eqref{snn} or \eqref{snn-2} to recover the previously mentioned result $\Syr_{\min}(N) \leq N^{1-c}$ for almost all $N$ and some absolute constant $c>0$; see Section \ref{main-sec} for details.  In the language of evolutionary partial differential equations, these type of results can be viewed as analogous to ``almost sure local wellposedness'' results, in which one has good short-time control on the evolution for almost all choices of initial condition $N$.

In this analogy, Theorem \ref{main-syr} then corresponds to an ``almost sure almost global wellposedness'' result, where one needs to control the solution for times so large that the evolution gets arbitrary close to the bounded state $N=O(1)$.  To bootstrap from almost sure local wellposedness to almost sure almost global wellposedness, we were inspired by the work of Bourgain \cite{bourgain}, who demonstrated an almost sure global wellposedness result for a certain nonlinear Schr\"odinger equation by combining local wellposedness theory with a construction of an invariant probability measure for the dynamics.  Roughly speaking, the point was that the invariance of the measure would almost surely keep the solution in a ``bounded'' region of the state space for arbitrarily long times, allowing one to iterate the local wellposedness theory indefinitely.

In our context, we do not expect to have any useful invariant probability measures for the dynamics due to the geometric decay \eqref{snn} (and indeed Conjecture \ref{collatz-syr} would imply that the only invariant probability measure is the Dirac measure on $\{1\}$).  Instead, we can construct a \emph{family} of probability measures $\nu_x$ which are \emph{approximately} transported to each other by certain iterations of the Syracuse map (by a variable amount of time).  More precisely,  given a threshold $x \geq 1$ and an odd natural number $N \in 2\N+1$, define the \emph{first passage time}
$$ T_x(N) \coloneqq \inf \{ n \in \N: \Syr^n(N) \leq x \},$$
with the convention that $T_x(N) \coloneqq +\infty$ if $\Syr^n(N) > x$ for all $n$.  (Of course, if Conjecture \ref{collatz-syr} were true, this latter possibility could not occur, but we will not be assuming this conjecture in our arguments.)  We then define the \emph{first passage location}
$$ \Pass_x(N) \coloneqq \Syr^{T_x(N)}(N)$$
with the (somewhat arbitrary and artificial) convention that $\Syr^\infty(N) \coloneqq 1$; thus $\Pass_x(N)$ is the first location of the Syracuse orbit $\Syr^\N(N)$ that falls inside $[1,x]$, or $1$ if no such location exists; if we ignore the latter possibility, then $\Pass_x$ can be viewed as a further acceleration of the Collatz and Syracuse maps.  We will also need a constant $\alpha > 1$ sufficiently close to one. The precise choice of this parameter is not critical, but for sake of concreteness we will set
\begin{equation}\label{alpha-def-const}
\alpha \coloneqq 1.001.
\end{equation}
The key proposition is then

\begin{proposition}[Stabilisation of first passage]\label{transport}  For any $y$ with $2\N+1 \cap [y,y^\alpha]$ is non-empty (and in particular, for any sufficiently large $y$), let $\mathbf{N}_y$ be a random variable with distribution $\mathbf{N}_y \equiv \Log( 2\N+1 \cap [y,y^\alpha] )$.  Then for sufficiently large $x$, we have the estimates
\begin{equation}\label{fail}
\P( T_x(\mathbf{N}_y) = +\infty ) \ll x^{-c}
\end{equation}
for $y = x^\alpha, x^{\alpha^2}$, and also
\begin{equation}\label{tv}
d_\TV( \Pass_x( \mathbf{N}_{x^\alpha} ), \Pass_x( \mathbf{N}_{x^{\alpha^2}} ) ) \ll \log^{-c} x
\end{equation}
for some absolute constant $c>0$. (The implied constants here are also absolute.)
\end{proposition}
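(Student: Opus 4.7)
The plan is to exploit Proposition~\ref{rach}. Since $\mathbf{N}_y = \Log(2\N+1\cap[y,y^\alpha])$ is approximately uniformly distributed modulo small powers of $2$, one may couple $\vec a^{(n)}(\mathbf{N}_y)$ to an iid $\Geom(2)^n$ tuple up to total variation $O(2^{-c_1 n})$, provided $n$ is much smaller than $\log y$. This reduces the analysis of the Syracuse orbit to probabilistic estimates on the affine formula \eqref{s-iter}: part \eqref{fail} will then follow from classical large-deviation theory, while part \eqref{tv} will rest on a Fourier-analytic mixing estimate on the $3$-adic cyclic group $\Z/3^n\Z$, which I expect to be the novel technical heart of the paper.

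For \eqref{fail}, fix $y \in \{x^\alpha, x^{\alpha^2}\}$ and set $n_* \coloneqq \lceil (1+\eta) \log(y/x) / \log(4/3) \rceil$ for a small absolute $\eta > 0$; thus $n_* \asymp \log x$. Apply Proposition~\ref{rach} with $m = (2+c_0)n_*$ (admissible since $n_* \ll \log y$ once $\alpha$ is close enough to $1$), so that $\vec a^{(n_*)}(\mathbf{N}_y)$ may be replaced by $\Geom(2)^{n_*}$ up to total variation $O(2^{-c_1 n_*})$. Since $|\Geom(2)^{n_*}|$ has mean $2 n_*$ and the moment generating function of $\Geom(2)$ is finite in a neighbourhood of the origin, a Chernoff bound yields
\[\P\!\bigl(|\Geom(2)^{n_*}| \leq (2-\eta')n_*\bigr) \ll e^{-c_2 n_*}\]
for an appropriate $\eta' = \eta'(\eta) > 0$. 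Combined with \eqref{s-iter} and the offset bound \eqref{fn-bound}, this forces $\Syr^{n_*}(\mathbf{N}_y) \leq x$ with probability $1 - O(e^{-c n_*}) = 1 - O(x^{-c'})$, so $T_x(\mathbf{N}_y) \leq n_* < \infty$ with the required probability.

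For \eqref{tv}, conditioning on $T_x = n$, the formula \eqref{s-iter} gives
\[\Pass_x(\mathbf{N}_y) = 3^n 2^{-|\vec a^{(n)}(\mathbf{N}_y)|}\, \mathbf{N}_y + F_n(\vec a^{(n)}(\mathbf{N}_y)),\]
whose Archimedean component lies in $[0,x]$ while whose residue modulo $3^n$ equals $F_n(\vec a^{(n)}(\mathbf{N}_y))$ (the first term vanishing mod $3^n$). Replacing $\vec a^{(n)}$ by $\Geom(2)^n$ via Proposition~\ref{rach}, both $\Pass_x(\mathbf{N}_{x^\alpha})$ and $\Pass_x(\mathbf{N}_{x^{\alpha^2}})$ are controlled, up to $O(2^{-c_1 n})$, by a common law modulated only by the scalar $\mathbf{N}_y$. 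The extent to which the two laws agree is governed by the mixing rate of $F_n(\Geom(2)^n)$ modulo $3^n$: by Fourier inversion on $\Z/3^n\Z$, together with a routine Archimedean Fourier comparison on $[1,x]$, $d_\TV$ is bounded by a weighted sum over nonzero $\xi \in \Z/3^n\Z$ of the modulus of the characteristic function
\[\Phi_n(\xi) \coloneqq \E \exp\!\bigl(2\pi i \xi F_n(\Geom(2)^n) / 3^n\bigr).\]

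The main obstacle is thus the quantitative estimate for $\Phi_n(\xi)$. Because each $a_i$ appears in many summands of $F_n$ by \eqref{fn-def}, the summands are strongly correlated, so a naive independent-sum estimate fails; indeed, $|\Phi_n(\xi)|$ is close to $1$ at frequencies $\xi$ well approximated by rationals with small $3$-power denominators. Producing sufficient decay uniformly in $\xi$ to give a $\log^{-c} x$ saving in \eqref{tv} is, as announced in the abstract, the paper's central innovation: one groups the summands of $F_n$ via a two-dimensional renewal process generated by the partial sums $(m, a_{[m,n]})$, and controls the resulting obstructions by analysing how this process interacts with a union of triangles attached to each frequency $\xi$. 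All of the remaining ingredients --- Proposition~\ref{rach}, the Chernoff bound, and the algebraic decomposition \eqref{s-iter} --- are either already in hand or essentially routine.
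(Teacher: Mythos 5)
Your treatment of \eqref{fail} tracks the paper's Section 5 closely: you fix a drift time $n_* \asymp \log x$, invoke Proposition~\ref{rach} (using the logarithmic equidistribution of $\mathbf{N}_y$ mod $2^m$) to pass to $\Geom(2)^{n_*}$, and apply a Chernoff bound to $|\Geom(2)^{n_*}|$. That part is essentially the proof in the paper.

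For \eqref{tv}, however, the reduction is substantially under-specified, and the route you sketch contains a gap. The difficulty is that after conditioning on $T_x(\mathbf{N}_y)=n$, the triple $(n, \vec a^{(n)}(\mathbf{N}_y), \mathbf{N}_y)$ is tightly coupled: by Lemma~\ref{iter-lem} the event $\{\Syr^n(\mathbf{N}_y)=M,\ \vec a^{(n)}=\vec a\}$ pins $\mathbf{N}_y$ to the single value $\Aff_{\vec a}^{-1}(M)$, and the distribution of $T_x$ itself depends on $y$ (larger $y$ means a longer stopping time). Saying the two laws differ ``only by the scalar $\mathbf{N}_y$'' elides exactly this coupling, and Proposition~\ref{rach} alone does not break it. The paper resolves it by an intermediate decoupling time: one shows that (outside a $\log^{-c}x$-probability event) the event $\{T_x(\mathbf{N}_y)=n,\ \Pass_x(\mathbf{N}_y)\in E\}$ agrees with $\{T_x(\Syr^{n-m_0}(\mathbf{N}_y))=m_0,\ \Pass_x(\Syr^{n-m_0}(\mathbf{N}_y))\in E\}$ for a fixed $m_0\asymp\log x$ independent of $y$; one then computes $\P(\Aff_{\vec a}(\mathbf{N}_y)=M)$ explicitly from the logarithmic density, sums over $\vec a$ and $M$, and arrives at a $3$-adic expression in which Proposition~\ref{tv-bound} shows the dependence on the total length $n-m_0$ (and hence on $y$) washes out. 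None of these steps appear in your outline.

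Your invocation of ``Fourier inversion on $\Z/3^n\Z$ together with a routine Archimedean Fourier comparison on $[1,x]$'' is also not a usable shortcut. The characteristic function $\Phi_n(\xi)$ is \emph{not} uniformly small over nonzero $\xi$; it has magnitude $\gg 1$ at frequencies $\xi$ divisible by a large power of $3$ (the low-frequency modes, e.g.\ $\xi = 3^{n-1}$, reflecting the strong irregularity of $F_n(\Geom(2)^n)$ at coarse $3$-adic scales, such as it never being $0\bmod 3$). The paper's Proposition~\ref{f-decay} only gives decay for $\xi$ \emph{not} divisible by $3$, which is why the intermediate Proposition~\ref{tv-bound} is formulated as fine-scale mixing (comparison against averages over cosets of $3^m\Z/3^n\Z$) rather than full equidistribution, and why Section~\ref{decay-sec} must pass through a further decoupling at a stopping time, a Cauchy--Schwarz, and the injectivity Lemma~\ref{inj} before Plancherel can be brought to bear. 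A direct $\ell^1$ bound on $\sum_\xi |\Phi_n(\xi)|$ would fail at the low frequencies you have not excluded, so the proposed reduction from \eqref{tv} to a characteristic-function sum needs to be repaired before the announced two-dimensional renewal analysis can be cited as the remaining task.
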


Informally, this theorem asserts that the Syracuse orbits of $ \mathbf{N}_{x^\alpha} $ and $ \mathbf{N}_{x^{\alpha^2}}$ are almost indistinguishable from each other once they pass $x$, as long as one synchronises the orbits so that they simultaneously pass $x$ for the first time.  In Section \ref{main-reduce} we shall see how Theorem \ref{main-syr} (and hence Theorem \ref{main}) follows from Proposition \ref{transport}; basically the point is that \eqref{fail}, \eqref{tv} imply that the first passage map $\Pass_x$ approximately maps the distribution $\nu_{x^\alpha}$ of $\Pass_{x^{\alpha}}( \mathbf{N}_{x^{\alpha^2}} )$ to the distribution $\nu_x$ of $\Pass_{x}( \mathbf{N}_{x^{\alpha}} )$, and one can then iterate this to map almost all of the probabilistic mass of $\mathbf{N}_y$ for large $y$ to be arbitrarily close to the bounded state $N=O(1)$.  The implication is very general and does not use any particular properties of the Syracuse map beyond \eqref{fail}, \eqref{tv}.

The estimate \eqref{fail} is easy to establish; it is \eqref{tv} that is the most important and difficult conclusion of Proposition \ref{transport}.  We remark that the bound of $O(\log^{-c} x)$ in \eqref{tv} is stronger than is needed for this argument; any bound of the form $O((\log\log x)^{-1-c})$ would have sufficed. Conversely, it may be possible to improve the bound in \eqref{tv} further, perhaps all the way to $x^{-c}$.

\subsection{Fine-scale mixing of Syracuse random variables}

It remains to establish Proposition \ref{transport}.  Since the constant $\alpha$ in \eqref{alpha-def-const} is close to $1$, this proposition falls under the regime of a (refined) ``local wellposedness'' result, since from the heuristic \eqref{snn} (or \eqref{snn-2}) we expect the first passage time $T_x(\mathbf{N}_y)$ to be comparable to a small multiple of $\log \mathbf{N}_y$.  Inspecting the iteration formula \eqref{s-iter}, the behaviour of the $n$-Syracuse valuation $\vec a^{(n)}(\mathbf{N}_y)$ for such times $n$ is then well understood thanks to Proposition \ref{rach}; the main remaining difficulty is to understand the behaviour of the $n$-Syracuse offset map $F_n\colon (\N+1)^n \to \Z[\frac{1}{2}]$, and more specifically to analyse the distribution of the random variable $F_n(\Geom(2)^n) \mod 3^k$ for various $n,k$, where by abuse of notation we use $x \mapsto x \mod 3^k$ to denote the unique ring homomorphism from $\Z[\frac{1}{2}]$ to $\Z/3^k \Z$ (which in particular maps $\frac{1}{2}$ to the inverse $\frac{3^k+1}{2} \mod 3^k$ of $2 \mod 3^k$).  Indeed, from \eqref{s-iter} one has
\begin{equation}\label{syr-forward}
 \Syr^n(N) = F_n(\vec a^{(n)}(N)) \mod 3^k
\end{equation}
whenever $0 \leq k \leq n$ and $N \in 2\N+1$.  Thus,  if $n, \mathbf{N}, n', c_0$ obey the hypotheses of Proposition \ref{rach}, one has
$$ d_\TV( \Syr^n(\mathbf{N}) \mod 3^k, F_n( \Geom(2)^n ) \mod 3^k ) \ll 2^{-c_1 n}$$
for all $0 \leq k \leq n$.  If we now define the \emph{Syracuse random variables} $\Syrac(\Z/3^n\Z)$ for $n \in \N$ to be random variables on the cyclic group $\Z/3^n\Z$ with the distribution
\begin{equation}\label{syrac-def}
 \Syrac(\Z/3^n\Z) \equiv F_n( \Geom(2)^n ) \mod 3^n
\end{equation}
then from \eqref{fn-def} we see that
\begin{equation}\label{xlk}
 \Syrac(\Z/3^n\Z) \mod 3^k \equiv \Syrac(\Z/3^k\Z)
\end{equation}
whenever $k \leq n$, and thus
$$ d_\TV( \Syr^n(\mathbf{N}) \mod 3^k, \Syrac(\Z/3^k\Z) ) \ll 2^{-c_1 n}.$$
We thus see that the $3$-adic distribution of the Syracuse orbit $\Syr^\N(\mathbf{N})$ is controlled (initially, at least) by the random variables $\Syrac(\Z/3^n\Z)$. The distribution of these random variables can be computed explicitly for any given $n$ via the following recursive formula:

\begin{lemma}[Recursive formula for Syracuse random variables]\label{recursive} For any $n \in \N$ and $x \in \Z/3^{n+1}\Z$, one has
$$ \P( \Syrac(\Z/3^{n+1}\Z) = x ) = \frac{\sum_{1 \leq a \leq 2 \times 3^n: 2^a x = 1 \mod 3} 2^{-a} \P\left( \Syrac(\Z/3^{n}\Z)= \frac{2^a x-1}{3} \right)}{1 - 2^{-2 \times 3^n}},$$
where $\frac{2^a x-1}{3}$ is viewed as an element of $\Z/3^n\Z$.
\end{lemma}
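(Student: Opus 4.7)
The plan is to establish a recursive identity at the level of the offset maps $F_n$, then transfer it to the distributions via \eqref{syrac-def}. The first step is to peel off the last coordinate in the definition \eqref{fn-def}: the term $m = n+1$ in the sum defining $F_{n+1}(\vec a)$ equals $2^{-a_{n+1}}$, while for $1 \leq m \leq n$ one has $a_{[m,n+1]} = a_{[m,n]} + a_{n+1}$, so one can factor out $2^{-a_{n+1}}$ and a $3$ from each of the remaining terms. This yields the clean identity
$$ F_{n+1}(a_1,\dots,a_{n+1}) = 2^{-a_{n+1}}\bigl(3 F_n(a_1,\dots,a_n) + 1\bigr),$$
which is a short and essentially mechanical computation.

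Next, I would introduce iid $\Geom(2)$ variables $\a_1,\dots,\a_{n+1}$ and set $\Y \coloneqq F_n(\a_1,\dots,\a_n) \mod 3^n$, so that $\Y \equiv \Syrac(\Z/3^n\Z)$ by \eqref{syrac-def} and $\Y$ is independent of $\a_{n+1}$. Because multiplication by $3$ defines a well-defined map $\Z/3^n\Z \to \Z/3^{n+1}\Z$, the quantity $3 F_n(\a_1,\dots,\a_n) \mod 3^{n+1}$ depends only on $\Y$, and the recursive identity gives
$$ \Syrac(\Z/3^{n+1}\Z) \equiv 2^{-\a_{n+1}}(3\Y + 1) \mod 3^{n+1}.$$

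For a fixed target $x \in \Z/3^{n+1}\Z$, I would then condition on $\a_{n+1} = a$ and solve $3\Y \equiv 2^a x - 1 \mod 3^{n+1}$ in $\Y \in \Z/3^n\Z$. Such an equation has a solution iff $2^a x \equiv 1 \mod 3$, and in that case a unique solution $\Y \equiv (2^a x - 1)/3 \mod 3^n$, where the division makes sense in $\Z/3^n\Z$ after forming $2^a x - 1$ in $\Z/3^{n+1}\Z$. Summing over $a \in \N+1$ with weights $\P(\a_{n+1}=a) = 2^{-a}$ produces the infinite-range version
$$ \P(\Syrac(\Z/3^{n+1}\Z) = x) = \sum_{\substack{a \geq 1 \\ 2^a x \equiv 1 \mod 3}} 2^{-a}\,\P\!\left(\Syrac(\Z/3^n\Z) = \tfrac{2^a x - 1}{3}\right).$$

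The final step, and the only point beyond a direct unfolding of definitions, is collapsing this infinite tail into the finite range $1 \leq a \leq 2\cdot 3^n$ stated in the lemma. The relevant input is that $2$ is a primitive root modulo every power of $3$, so the multiplicative order of $2$ modulo $3^{n+1}$ is exactly $2\cdot 3^n$; consequently both the congruence condition $2^a x \equiv 1 \mod 3$ and the value of $(2^a x - 1)/3 \mod 3^n$ depend only on $a \mod 2 \cdot 3^n$. Grouping each $a' \in \{1,\dots,2\cdot 3^n\}$ with the arithmetic progression $\{a' + 2k\cdot 3^n : k \geq 0\}$ supplies the geometric factor $(1 - 2^{-2\cdot 3^n})^{-1}$, yielding the claimed formula. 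The order computation is a standard lifting-the-exponent argument, so I would not expect any substantive obstacle.
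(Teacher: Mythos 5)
Your proof is correct and follows essentially the same route as the paper's: peel off one geometric coordinate to get the recursion $F_{n+1} = 2^{-a}(3F_n+1)$, transfer to the Syracuse random variables via independence, invert the affine map modulo $3^{n+1}$, and fold the geometric tail into a finite sum using the periodicity of $a \mapsto (2^a x - 1)/3 \bmod 3^n$. (The only cosmetic difference is the indexing: the paper writes $F_{n+1}(\a_{n+1},\dots,\a_1)$ to expose the same recursion, which is distributionally interchangeable since the $\a_i$ are iid. Also, you invoke that $2$ is a primitive root modulo every power of $3$; the paper only uses Euler's theorem, since having the order of $2$ divide $2\cdot 3^n$ already gives the required periodicity.)
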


\begin{proof}  Let $(\a_1,\dots,\a_{n+1}) \equiv \Geom(2)^{n+1}$ be $n+1$ iid copies of $\Geom(2)$.  From \eqref{fn-def} (after relabeling the variables $(\a_1,\dots,\a_{n+1})$ in reverse order $(\a_{n+1},\dots,\a_1)$) we have
\begin{equation}\label{fn-recurse}
 F_{n+1}(\a_{n+1},\dots,\a_{1}) = \frac{3 F_n(\a_{n+1},\dots,\a_2)+1}{2^{\a_{1}}}
\end{equation}
and thus we have
$$ \Syrac(\Z/3^{n+1}\Z) \equiv \frac{3\Syrac(\Z/3^n\Z)+1}{2^{\Geom(2)}},$$
where $3\Syrac(\Z/3^n\Z)$ is viewed as an element of $\Z/3^{n+1}\Z$, and the random variables $\Syrac(\Z/3^n\Z), \Geom(2)$ on the right hand side are understood to be independent.  We therefore have
\begin{align*}
 \P( \Syrac(\Z/3^{n+1}\Z) = x ) &= \sum_{a \in \N+1} 2^{-a} \P\left( \frac{3\Syrac(\Z/3^n\Z)+1}{2^a} = x \right) \\
&= \sum_{a \in \N+1: 2^a x = 1 \mod 3} 2^{-a} \P\left( \Syrac(\Z/3^n\Z) = \frac{2^a x-1}{3} \right).
\end{align*}
By Euler's theorem, the quantity $\frac{2^a x-1}{3} \in \Z/3^n \Z$ is periodic in $a$ with period $2 \times 3^n$.  Splitting $a$ into residue classes modulo $2 \times 3^n$ and using the geometric series formula, we obtain the claim.
\end{proof}

Thus for instance, we trivially have $\Syrac(\Z/3^0\Z)$ takes the value $0 \mod 1$ with probability $1$; then by the above lemma, $\Syrac(\Z/3\Z)$ takes the values $0,1,2 \mod 3$ with probabilities $0, 1/3, 2/3$ respectively; another application of the above lemma then reveals that $\Syrac(\Z/3^2\Z)$ takes the values $0,1,\dots,8 \mod 9$ with probabilities
$$ 0, \frac{8}{63}, \frac{16}{63}, 0, \frac{11}{63}, \frac{4}{63}, 0, \frac{2}{63}, \frac{22}{63} $$
respectively; and so forth. More generally, one can numerically compute the distribution of $\Syrac(\Z/3^n\Z)$ exactly for small values of $n$, although the time and space required to do so increases exponentially with $n$.

\begin{remark}  One could view the Syracuse random variables $\Syrac(\Z/3^n\Z)$ as projections
\begin{equation}\label{syr3}
 \Syrac(\Z/3^n\Z) \equiv \Syrac(\Z_3) \mod 3^n
\end{equation}
of a single random variable $\Syrac(\Z_3)$ taking values in the $3$-adics $\Z_3 \coloneqq \varprojlim_n \Z/3^n\Z$ (equipped with the usual metric $d(x,y) \coloneqq 3^{-\nu_3(x-y)}$), which can for instance be defined as
\begin{align*}
 \Syrac(\Z_3) &\equiv \sum_{j=0}^\infty 3^j 2^{-\a_{[1,j+1]}}\\
&= 2^{-\a_1} + 3^1 2^{-\a_{[1,2]}} + 3^2 2^{-\a_{[1,3]}} + \dots
\end{align*}
where $\a_1,\a_2,\dots$ are iid copies of $\Geom(2)$; note that this series converges in $\Z_3$, and the equivalence of distribution \eqref{syr3}follows from \eqref{syrac-def}, \eqref{fn-def} after reversing\footnote{As an alternative to reversing the order of the tuple $(\a_1,\dots,\a_n)$, one could instead index time by the negative integers $-1,-2,-3,\dots$ rather than the positive integers $1,2,3,\dots$, viewing $\Syrac(\Z_3)$ as the outcome of an ``ancient'' Syracuse iteration that extends to arbitrarily large negative times (and whose initial condition is irrelevant).  This perspective towards the Syracuse variables is arguably more natural, and could be adopted elsewhere in the paper; however, we have chosen (mostly for aesthetic reasons) to index time by positive integers rather than negative ones, which necessitates some reversal of the labeling at some junctures.} the order of the tuple $(\a_1,\dots,\a_n)$ (cf. \eqref{fn-recurse}).  One can view the distribution of $\Syrac(\Z_3)$ as the unique stationary measure for the discrete Markov process\footnote{This Markov process may possibly be related to the $3$-adic Markov process for the \emph{inverse} Collatz map studied in \cite{wirsch}. See also a recent investigation of $3$-adic irregularities of the Collatz iteration in \cite{thomas}.} on $\Z_3$ that maps each $x \in \Z_3$ to $\frac{3x+1}{2^{a}}$ for each $a \in \N+1$ with transition probability $2^{-a}$ (this fact is implicit in the proof of Lemma \ref{recursive}).  However, we will not explicitly adopt the $3$-adic perspective in this paper, preferring to work instead with the finite projections $\Syrac(\Z/3^n\Z)$ of $\Syrac(\Z_3)$.
\end{remark}

While the Syracuse random variables $\Syrac(\Z/3^n\Z)$ fail to be uniformly distributed on $\Z/3^n \Z$, we can show that they do approach uniform distribution $n \to \infty$ at fine scales (as measured in a $3$-adic sense), and this turns out to be the key ingredient needed to establish
Proposition \ref{transport}.  More precisely, we will show

\begin{proposition}[Fine scale mixing of $n$-Syracuse offsets]\label{tv-bound}  For all $1 \leq m \leq n$ one has
\begin{equation}\label{yal}
\Osc_{m,n} \left( \P( \Syrac(\Z/3^n\Z) = Y \mod 3^n ) \right)_{Y \in \Z/3^n\Z} \ll_A m^{-A}
\end{equation}
for any fixed $A>0$, where the oscillation $\Osc_{m,n}( c_Y )_{Y \in \Z/3^n\Z}$ of a tuple of real numbers $c_Y \in \R$ indexed by $\Z/3^n\Z$ at $3$-adic scale $3^{-m}$ is defined by
\begin{equation}\label{osc-def}
 \Osc_{m,n}( c_Y )_{Y \in \Z/3^n\Z} \coloneqq \sum_{Y \in \Z/3^n\Z} \left| c_Y - 3^{m-n} \sum_{Y' \in \Z/3^n\Z: Y' = Y \mod 3^m} c_{Y'} \right|.
\end{equation}
\end{proposition}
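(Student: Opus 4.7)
The natural approach is discrete Fourier analysis on the cyclic group $\Z/3^n\Z$. Let $f(Y) := \P(\Syrac(\Z/3^n\Z) = Y)$ with Fourier transform $\hat f(\xi) := \E\exp(-2\pi i\xi \Syrac(\Z/3^n\Z)/3^n)$. The averaged density $\bar f(Y) := 3^{m-n}\sum_{Y'\equiv Y \bmod 3^m} f(Y')$ is the orthogonal projection of $f$ onto the subspace of functions that are constant on $3^m$-cosets of $\Z/3^n\Z$; this subspace is exactly the span of the characters with $3^{n-m}\mid\xi$, so $f - \bar f$ is Fourier-supported on the ``bad'' set $B_{m,n} := \{\xi \in \Z/3^n\Z : \nu_3(\xi) < n-m\}$. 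The self-similarity $\hat f_n(3^j\xi') = \hat f_{n-j}(\xi')$ (which follows immediately from \eqref{xlk}) allows a dyadic decomposition of $B_{m,n}$ by $N := n - \nu_3(\xi)$, and reduces the oscillation bound to a suitably strong decay estimate for the Fourier transform $\hat f_N(\xi')$ at units $\xi' \in (\Z/3^N\Z)^*$ for each scale $N > m$.

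Using \eqref{fn-def} together with the independence of the $\Geom(2)$-coordinates, this Fourier coefficient can be written as
\[
\hat f_N(\xi') = \E \exp\Bigl(-2\pi i\, \xi' \sum_{\ell=1}^{N} \frac{2^{-\a_{[\ell, N]}}}{3^\ell}\Bigr),
\]
with the $3$-adic arithmetic interpreted modulo $3^N$. This exhibits $\hat f_N(\xi')$ as the characteristic function at frequency $\xi'$ of a skew random walk on $\Z/3^N\Z$ driven by the iid sequence $\a_1,\dots,\a_N \equiv \Geom(2)$. Introducing reverse partial sums $S_r := \a_{N-r+1} + \dots + \a_N$, so that $\a_{[\ell, N]} = S_{N-\ell+1}$, the walk lifts naturally to a two-dimensional renewal process $(r, S_r) \in \Z_{>0}^2$ taking unit steps in $r$ and $\Geom(2)$-distributed steps in $S$.

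The decisive step---and where essentially all of the difficulty concentrates---is to obtain a quantitative high-frequency bound on $|\hat f_N(\xi')|$, uniform in $\xi' \in (\Z/3^N\Z)^*$ and strong enough to survive the dyadic sum over scales. The strategy is to identify, for each frequency $\xi'$, a collection of triangular regions in the $(r,S)$-plane with the property that whenever the renewal trajectory $(r, S_r)$ passes through one of them, the corresponding scale-$\ell$ factor in a suitable product expansion of $\hat f_N(\xi')$ contributes a contraction strictly less than $1$ in modulus. A renewal-theoretic argument then must show that, regardless of the $3$-adic arithmetic of $\xi'$, a typical path meets $\gg N$ such triangles at well-separated scales, producing near-independent contractions and hence the required decay. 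The main obstacle is exactly this two-dimensional renewal estimate: one must rule out all configurations in which the arithmetic of $\xi'$ conspires with the $\Geom(2)$ statistics of the walk to allow paths to systematically avoid the triangles. This is the combinatorial--geometric ``two-dimensional renewal process versus a union of triangles'' analysis advertised in the abstract; once in hand, it feeds through the Fourier framework of the first paragraph to yield the oscillation bound $\ll_A m^{-A}$.
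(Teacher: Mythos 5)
Your high-level architecture matches the paper's: you correctly identify that $f - \bar f$ is Fourier-supported on frequencies not divisible by $3^{n-m}$, that the self-similarity \eqref{xlk} collapses everything scale by scale to the Fourier coefficients of $\Syrac(\Z/3^N\Z)$ at units, and that the hard content is Proposition~\ref{f-decay}, which the paper proves via exactly the renewal-process-versus-triangles mechanism you describe. However, there is a genuine gap in the step you dispose of in one clause (``feeds through the Fourier framework of the first paragraph to yield the oscillation bound''): decay of $\widehat f$ \emph{alone} does not yield the $\ell^1$ oscillation bound.

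The problem is a dimension count. The oscillation is an $\ell^1$ quantity; to reach Fourier side one uses Cauchy--Schwarz, $\Osc_{m,n}(f) \le 3^{n/2}\bigl(\sum_Y |f(Y)-\bar f(Y)|^2\bigr)^{1/2}$, and then Plancherel converts the $\ell^2$ sum to $\sum_{\xi}|\widehat f(\xi)|^2$ over the relevant frequencies. Inserting only the bound $|\widehat f(\xi)| \ll_A (n-\nu_3(\xi))^{-A}$, the sum over the $\asymp 3^N$ frequencies at scale $N$ contributes $\asymp 3^N N^{-2A}$; the outer $N=n$ term dominates and one ends up with $\Osc_{m,n}(f) \ll 3^{n/2}\cdot n^{-A}$, which is vastly worse than the target $m^{-A}$. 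One cannot avoid the $3^{n/2}$ loss without an additional anti-concentration (Renyi $2$-entropy / collision entropy) input on the distribution itself. This is precisely what the paper supplies and what your proposal omits: after conditioning on the first passage time $\k$ for the running sum $\a_{[1,\cdot]}$ to cross $n\log 3/\log 2 - C_A^3\log n$, and on $\a_{[1,\k+1]}=l$, the variable $\X_n$ splits into two \emph{independent} blocks $F_{k+1}(\a_{k+1},\dots,\a_1)$ and $3^{k+1}2^{-l}F_{n-k-1}(\a_n,\dots,\a_{k+2})$. The Fourier transform factors accordingly; the second factor carries the characteristic-function decay of Proposition~\ref{f-decay}, while the first factor is controlled in $\ell^2$ via Lemma~\ref{inj} (injectivity of $F_n$) strengthened to a $3$-adic separation corollary, which shows that the at most one tuple of $\b$'s matching each residue class occurs with probability $\approx 3^{-n}$, neutralizing the $3^n$ volume factor. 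Your proposal contains no analogue of this conditioning, of the independence-engineering via the stopping time, or of the injectivity/collision-entropy bound, and without them the Fourier framework of your first paragraph does not close.
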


Informally, the above proposition asserts that the Syracuse random variable $\Syrac(\Z/3^n\Z)$ is approximately uniformly distributed in ``fine-scale'' or ``high-frequency'' cosets $Y + 3^m\Z/3^n\Z$, after conditioning to the event $\Syrac(\Z/3^n\Z) = Y \mod 3^m$.  Indeed, one could write the left-hand side of \eqref{yal} if desired as
$$ d_{\TV}( \Syrac(\Z/3^n\Z), \Syrac(\Z/3^n\Z) + \Unif( 3^m\Z/3^n\Z) )$$
where the random variables $\Syrac(\Z/3^n\Z), \Unif( 3^m\Z/3^n\Z)$ are understood to be independent.  In Section \ref{main-sec}, we show how Proposition \ref{transport} (and hence Theorem \ref{main}) follows from Proposition \ref{tv-bound} and Proposition \ref{rach}.

\begin{remark}  One can heuristically justify this mixing property as follows.  The geometric random variable $\Geom(2)$ can be computed to have a Shannon entropy of $\log 4$; thus, by asymptotic equipartition, the random variable $\Geom(2)^n$ is expected to behave like a uniform distribution on $4^{n+o(n)}$ separate tuples in $(\N+1)^n$.  On the other hand, the range $\Z/3^n\Z$ of the map $\vec a \mapsto F_n(\vec a) \mod 3^n$ only has cardinality $3^n$.  While this map does have substantial irregularities at coarse $3$-adic scales (for instance, it always avoids the multiples of $3$), it is not expected to exhibit any such irregularity at fine scales, and so if one models this map by a random map from $4^{n+o(n)}$ elements to $\Z/3^n\Z$ one is led to the estimate \eqref{yal} (in fact this argument predicts a stronger bound of $\exp( - cm )$ for some $c>0$, which we do not attempt to establish here).
\end{remark}

\begin{remark}  In order to upgrade logarithmic density to natural density in our results, it seems necessary to strengthen Proposition \ref{tv-bound} by establishing a suitable fine scale mixing property of the entire random affine map $\Aff_{\Geom(2)^n}$, as opposed to just the offset $F_n(\Geom(2)^n)$.  This looks plausibly attainable from the methods in this paper, but we do not pursue this question here.
\end{remark}

To prove Proposition \ref{tv-bound}, we use a partial convolution structure present in the $n$-Syracuse offset map, together with Plancherel's theorem, to reduce matters to establishing a superpolynomial decay bound for the characteristic function (or Fourier coefficients) of a Syracuse random variable $\Syrac(\Z/3^n\Z)$.  More precisely, in Section \ref{decay-sec} we derive Proposition \ref{tv-bound} from

\begin{proposition}[Decay of characteristic function]\label{f-decay} Let $n \geq 1$, and let $\xi \in \Z/3^n\Z$ be not divisible by $3$.  Then
\begin{equation}\label{lla}
 \E e^{-2\pi i \xi \Syrac(\Z/3^n\Z)/3^n} \ll_A n^{-A}
\end{equation}
for any fixed $A>0$.
\end{proposition}

A key point here is that the implied constant in \eqref{lla} is uniform in the parameters $n \geq 1$ and $\xi \in \Z/3^n\Z$ (assuming of course that $\xi$ is not divisible by $3$), though as indicated we permit this constant to depend on $A$.

\begin{remark} In the converse direction, it is not difficult to use the triangle inequality to establish the inequality
$$ |\E e^{-2\pi i \xi \Syrac(\Z/3^n\Z)/3^n}| \leq \Osc_{n-1,n} \left( \P( \Syrac(\Z/3^n\Z) = Y \mod 3^n ) \right)_{Y \in \Z/3^n\Z}$$
whenever $\xi$ is not a multiple of $3$ (so in particular the function $x \mapsto e^{-2\pi i \xi x/3^n}$ has mean zero on cosets of $3^{n-1}\Z/3^n\Z$).  Thus Proposition \ref{f-decay} and Proposition \ref{tv-bound} are in fact equivalent.  One could also equivalently phrase Proposition \ref{f-decay} in terms of the decay properties of the characteristic function of $\Syrac(\Z_3)$ (which would be defined on the Pontryagin dual $\hat \Z_3 = \Q_3/\Z_3$ of $\Z_3$), but we will not do so here.
\end{remark}

The remaining task is to establish Proposition \ref{f-decay}. This turns out to be the most difficult step in the argument, and is carried out in Section \ref{fourier-sec}.  From \eqref{fn-def}, \eqref{syrac-def} and reversing the order of the random variables $\a_1,\dots,\a_n$ (cf. \eqref{fn-recurse}), we can describe the distribution of the Syracuse random variable by the formula
\begin{equation}\label{syria}
\Syrac(\Z/3^n\Z) \equiv 2^{-\a_1} + 3^1 2^{-\a_{[1,2]}} + \dots + 3^{n-1} 2^{-\a_{[1,n]}} \mod 3^n,
\end{equation}
with $(\a_1,\dots,\a_n) \equiv \Geom(2)^n$; this also follows from \eqref{syr3}.  If this random variable $\Syrac(\Z/3^n\Z)$ was the sum of independent random variables, then the characteristic function of $\Syrac(\Z/3^n\Z)$ would factor as something like a Riesz product of cosines, and its estimation would be straightforward.  Unfortunately, the expression \eqref{syria} does not obviously resolve into such a sum of independent random variables; however, by grouping adjacent terms $3^{2j-2} 2^{-\a_{[1,2j-1]}},  3^{2j-1} 2^{-\a_{[1,2j]}}$ in \eqref{syria} into pairs, one can at least obtain a decomposition into the sum of independent expressions once one conditions on the sums $\b_j \coloneqq \a_{2j-1}+\a_{2j}$ (which are iid copies of a Pascal distribution $\Pascal$).  This lets one express the characteristic functions as an \emph{average} of products of cosines (times a phase), where the average is over trajectories of a certain random walk $\v_1, \v_{[1,2]}, \v_{[1,3]},\dots$ in $\Z^2$ with increments in the first quadrant that we call a \emph{two-dimensional renewal process}.  If we color certain elements of $\Z^2$ ``white'' when the associated cosines are small, and ``black'' otherwise, then the problem boils down to ensuring that this renewal process encounters a reasonably large number of white points (see Figure \ref{fig:triangles} in Section \ref{fourier-sec}).

From some elementary number theory, we will be able to describe the black regions of $\Z^2$ as a union of ``triangles'' $\Delta$ that are well separated from each other; again, see Figure \ref{fig:triangles}.	As a consequence, whenever the renewal process passes through a black triangle, it will very likely also pass through at least one white point after it exits the triangle.  This argument is adequate so long as the triangles are not too large in size; however, for very large triangles it does not produce a sufficient number of white points along the renewal process.  However, it turns out that large triangles tend to be fairly well separated from each other (at least in the neighbourhood of even larger triangles), and this geometric observation allows one to close the argument.

As with Proposition \ref{tv-bound}, it is possible that the bound in Proposition \ref{f-decay} could be improved, perhaps to as far as $O(\exp(-cn))$ for some $c>0$.  However, we will not need or pursue such a bound here.

The author is supported by NSF grant DMS-1764034 and by a Simons Investigator Award, and thanks Marek Biskup for useful discussions, and Ben Green, Matthias Hippold, Alex Kontorovich, Lech Mazur, Alexandre Patriota, Sankeerth Rao, Mary Rees, Lior Silberman, and several anonymous commenters on his blog for corrections and other comments.  We are especially indebted to the anonymous referee for a careful reading and many useful suggestions.

\section{Notation and preliminaries}\label{notation-sec}

We use the asymptotic notation $X \ll Y$, $Y \gg X$, or $X = O(Y)$ to denote the bound $|X| \leq CY$ for an absolute constant $C$. We also write $X \asymp Y$ for $X \ll Y \ll X$.  We also use $c>0$ to denote various small constants that are allowed to vary from line to line, or even within the same line. If we need the implied constants to depend on other parameters, we will indicate this by subscripts unless explicitly stated otherwise, thus for instance $X \ll_A Y$ denotes the estimate $|X| \leq C_A Y$ for some $C_A$ depending on $A$.

If $E$ is a set, we use $1_E$ to denote its indicator, thus $1_E(n)$ equals $1$ when $n \in E$ and $0$ otherwise.  Similarly, if $S$ is a statement, we define the indicator $1_S$ to equal $1$ when $S$ is true and $0$ otherwise, thus for instance $1_E(n) = 1_{n \in E}$.  If $E,F$ are two events, we use $E \wedge F$ to denote their conjunction (the event that both $E,F$ hold) and $\overline{E}$ to denote the complement of $E$ (the event that $E$ does not hold).

The following alternate description of the $n$-Syracuse valuation $\vec a^{(n)}(N)$ (variants of which have frequently occurred in the literature on the Collatz conjecture, see e.g., \cite{sinai}) will be useful.

\begin{lemma}[Description of $n$-Syracuse valuation]\label{iter-lem}  Let $N \in 2\N+1$ and $n \in \N$.  Then $\vec a^{(n)}(N)$ is the unique tuple $\vec a$ in $(\N+1)^n$ for which $\Aff_{\vec a}(N) \in 2\N+1$.
\end{lemma}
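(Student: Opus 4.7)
The plan is to prove existence by directly invoking the definition and to prove uniqueness by tracking the 2-adic valuation along the composition. Existence is immediate: by iterating \eqref{S-def}, we have $\Syr^k(N) \in 2\N+1$ for every $k \leq n$, and \eqref{s-iter} already tells us $\Syr^n(N) = \Aff_{\vec a^{(n)}(N)}(N)$, so $\vec a = \vec a^{(n)}(N)$ works.

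For uniqueness, suppose $\vec a = (a_1,\dots,a_n) \in (\N+1)^n$ satisfies $\Aff_{\vec a}(N) \in 2\N+1$, and introduce the intermediate iterates $M_0 \coloneqq N$ and $M_k \coloneqq \Aff_{a_1,\dots,a_k}(N) = (3 M_{k-1}+1)/2^{a_k}$ for $k=1,\dots,n$. I would track the 2-adic valuation $v_k \coloneqq \nu_2(M_k) \in \Z \cup \{+\infty\}$. A case analysis on $v_{k-1}$ gives: (i) if $v_{k-1} \geq 1$, then $\nu_2(3M_{k-1}+1)=0$, so $v_k = -a_k \leq -1$; (ii) if $v_{k-1} \leq -1$, then $\nu_2(3M_{k-1}+1) = v_{k-1}$ (since adding $1$ to a 2-adically non-integral quantity does not change its valuation), so $v_k = v_{k-1}-a_k < v_{k-1}$; (iii) if $v_{k-1} = 0$, then $v_k = \nu_2(3M_{k-1}+1) - a_k$, which equals $0$ exactly when $a_k = \nu_2(3M_{k-1}+1)$, is $\geq 1$ when $a_k$ is too small, and is $\leq -1$ when $a_k$ is too large.

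Combining (i) and (ii) shows that once $v_{k-1}$ is nonzero, $v_k$ is negative and strictly decreasing thereafter, so in particular $v_n < 0$ and $M_n \notin \Z$. Therefore the hypothesis $M_n \in 2\N+1$ (so $v_n = 0$) forces $v_k = 0$ for every $k$, i.e.\ every $M_k$ is an odd positive integer (positivity is automatic since $M_{k-1}>0$ implies $3M_{k-1}+1>0$). Case (iii) then pins down $a_k = \nu_2(3M_{k-1}+1)$ uniquely at each step. A trivial induction using \eqref{S-def} identifies $M_{k-1}$ with $\Syr^{k-1}(N)$, yielding $\vec a = \vec a^{(n)}(N)$, as required.

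No serious obstacle is expected; the only subtlety is keeping book on the three regimes of $v_{k-1}$ and noting that negativity of $v_k$ is, in the terminology of 2-adic analysis, an absorbing state for the dynamics defined by $M \mapsto (3M+1)/2^a$.
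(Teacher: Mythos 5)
Your proof is correct, and it takes a genuinely different route from the paper's. The paper argues by \emph{backward} induction on $n$: from the identity $2^{a_n}\Aff_{\vec a}(N) = 3\Aff_{a_1,\dots,a_{n-1}}(N) + 1$ and the hypothesis that $\Aff_{\vec a}(N)$ is odd, it deduces that $3\Aff_{a_1,\dots,a_{n-1}}(N)$ is an odd positive integer; combining this with the fact (from \eqref{a-form}) that $\Aff_{a_1,\dots,a_{n-1}}(N) \in \Z[\tfrac{1}{2}]$, it concludes $\Aff_{a_1,\dots,a_{n-1}}(N)$ must itself be an odd positive integer, so the induction hypothesis pins down $(a_1,\dots,a_{n-1})$, and then $a_n$ is forced. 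Your argument instead works \emph{forward} in time, tracking the extended $2$-adic valuation $v_k = \nu_2(M_k)$ on $\Z[\tfrac{1}{2}]$ and showing that the set $\{v < 0\}$ is absorbing under $M \mapsto (3M+1)/2^a$, so that $v_n = 0$ forces $v_0 = v_1 = \cdots = v_n = 0$; the uniqueness of each $a_k$ then drops out of case (iii). The two proofs are contrapositive reorganizations of the same obstruction (once the orbit leaves $2\N+1$ it cannot return), but yours makes the $2$-adic dynamics and the absorbing structure completely explicit, whereas the paper's is more compact and leans on the algebra of $\Z[\tfrac{1}{2}]$ to shortcut the case analysis. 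Incidentally, the paper's proof has a harmless typo ($\Syr^{N-1}$ should read $\Syr^{n-1}$); your exposition, being more explicit, avoids that pitfall.
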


\begin{proof}  It is clear from \eqref{s-iter} that $\Aff_{\vec a^{(n)}(N)} \in 2\N+1$.  It remains to prove uniqueness.  The claim is easy for $n=0$, so suppose inductively that $n \geq 1$ and that uniqueness has already been established for $n-1$.  Suppose that we have found a tuple $\vec a \in (\N+1)^n$ for which $\Aff_{\vec a}(N)$ is an odd integer.  Then
$$ \Aff_{\vec a}(N) = \Aff_{a_n}( \Aff_{a_1,\dots,a_{n-1}}(N) ) = \frac{3\Aff_{a_1,\dots,a_{n-1}}(N)+1}{2^{a_n}}$$
and thus
\begin{equation}\label{y1}
 2^{a_n} \Aff_{\vec a}(N) =3\Aff_{a_1,\dots,a_{n-1}}(N)+1.
\end{equation}
This implies that $3\Aff_{a_1,\dots,a_{n-1}}(N)$ is an odd natural number.  But from \eqref{a-form}, $\Aff_{a_1,\dots,a_{n-1}}(N)$ also lies in $\Z[\frac{1}{2}]$.  The only way these claims can both be true is if $\Aff_{a_1,\dots,a_{n-1}}(N)$ is also an odd natural number, and then by induction $(a_1,\dots,a_{n-1}) = \vec a^{(n-1)}(N)$, which by \eqref{s-iter} implies that
$$ \Aff_{a_1,\dots,a_{n-1}}(N) = \Syr^{n-1}(N).$$
Inserting this into \eqref{y1} and using the fact that $\Aff_{\vec a}(N)$ is odd, we obtain
$$ a_n = \nu_2( 3\Syr^{N-1}(N) + 1 )$$
and hence by \eqref{van} we have $\vec a = \vec a^{(n)}$ as required.
\end{proof}

We record the following concentration of measure bound of Chernoff type, which also bears some resemblance to a local limit theorem.  We introduce the gaussian-type weights
\begin{equation}\label{gaussian-def}
 G_n(x) \coloneqq \exp( - |x|^2/n ) + \exp( - |x| )
\end{equation}
for any $n \geq 0$ and $x \in \R^d$ for some $d \geq 1$, where we adopt the convention that $\exp(-\infty)=0$ (so that $G_0(x) = \exp(-|x|)$).  Thus $G_n(x)$ is comparable to $1$ for $x = O(n^{1/2})$, decays in a gaussian fashion in the regime $n^{1/2} \leq |x| \leq n$, and decays exponentially for $|x| \geq n$.

\begin{lemma}[Chernoff type bound]\label{chern}  Let $d \in \N+1$, and let $\v$ be a random variable taking values in $\Z^d$ obeying the exponential tail condition
\begin{equation}\label{pvl}
 \P( |\v| \geq \lambda ) \ll \exp( -c_0 \lambda )
\end{equation}
for all $\lambda \geq 0$ and some $c_0>0$.  Assume the non-degeneracy condition that $\v$ is not almost surely concentrated on any coset of any proper subgroup of $\Z^d$.  Let $\vec \mu \coloneqq \E \v \in \R^d$ denote the mean of $\v$.   In this lemma all implied constants, as well as the constant $c$, can depend on $d$, $c_0$, and the distribution of $\v$.  Let $n \in \N$, and let $\v_1,\dots,\v_n$ be $n$ iid copies of $\v$.  Following \eqref{sum}, we write $\v_{[1,n]} \coloneqq \v_1 + \dots + \v_n$.
\begin{itemize}
\item[(i)]  For any $\vec L \in \Z^d$, one has
$$ \P\left( \v_{[1,n]} = \vec L \right) \ll \frac{1}{(n+1)^{d/2}}G_n\left( c \left(\vec L - n \vec \mu\right) \right).$$
\item[(ii)]  For any $\lambda \geq 0$, one has
$$ \P\left( |\v_{[1,n]}- n \vec \mu| \geq \lambda \right) \ll G_n( c \lambda ).$$
\end{itemize}
\end{lemma}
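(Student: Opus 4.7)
My plan is to establish (ii) by a standard Chernoff/Cram\'er large-deviations estimate, and then prove (i) by combining a uniform local limit theorem from Fourier inversion with a Cram\'er-type exponential tilting argument. The exponential tail hypothesis \eqref{pvl} guarantees that the moment generating function $M(t) \coloneqq \E e^{t \cdot \v}$ is finite and smooth on the real ball $\{t \in \R^d : |t| \leq c_0/2\}$, with $\log M(0) = 0$, $\nabla \log M(0) = \vec\mu$, and bounded Hessian; equivalently $\log M(te) - t e\cdot\vec\mu = O(t^2)$ uniformly in unit vectors $e \in \R^d$. For (ii), Markov's inequality applied to $\exp(t\, e \cdot (\v_{[1,n]} - n\vec\mu))$ yields
\[
\P\bigl(e\cdot(\v_{[1,n]} - n\vec\mu) \geq \lambda\bigr) \leq \exp(-t\lambda + Cn t^2),
\]
and optimizing over $t \in (0, c_0/2]$ produces the Gaussian bound $\exp(-c\lambda^2/n)$ when $\lambda \leq C_1 n$ and the exponential bound $\exp(-c\lambda)$ when $\lambda \geq C_1 n$ (by saturating $t$ at $c_0/2$). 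A union bound over the $2d$ coordinate directions $\pm e_j$ then delivers (ii).

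For the uniform local limit component of (i), namely $\P(\v_{[1,n]} = \vec L) \ll (n+1)^{-d/2}$, I use Fourier inversion on $\Z^d$:
\[
\P(\v_{[1,n]} = \vec L) = \int_{[-1/2,1/2]^d} \phi(\xi)^n e^{-2\pi i \xi \cdot \vec L}\, d\xi, \qquad \phi(\xi) \coloneqq \E e^{2\pi i \xi \cdot \v}.
\]
The non-degeneracy hypothesis forces $|\phi(\xi_0)| < 1$ for every $\xi_0 \in [-1/2,1/2]^d \setminus \{0\}$ (if $|\phi(\xi_0)| = 1$, then $\xi_0 \cdot \v$ would almost surely occupy a single coset of $\Z$, placing $\v$ in a coset of a proper subgroup) and forces the covariance $\Sigma \coloneqq \E\bigl((\v - \vec\mu)(\v - \vec\mu)^T\bigr)$ to be positive definite. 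By compactness, $|\phi(\xi)| \leq 1 - c'$ for $|\xi| \geq \delta$, while Taylor expansion of $\log\phi$ at the origin gives $|\phi(\xi)| \leq \exp(-c|\xi|^2)$ for $|\xi| \leq \delta$; splitting the integral at $|\xi|=\delta$ bounds the main piece by $\int \exp(-cn|\xi|^2)\, d\xi \ll n^{-d/2}$ with a negligible remainder of order $(1-c')^n$.

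To upgrade the uniform bound to include the factor $G_n(c(\vec L - n\vec\mu))$, I apply exponential tilting. Define $\mu_t(\v) \coloneqq M(t)^{-1} e^{t \cdot \v}\mu(\v)$; then the identity
\[
\P(\v_{[1,n]} = \vec L) = M(t)^n e^{-t \cdot \vec L}\, \P(\v_{t,[1,n]} = \vec L)
\]
(with $\v_{t,1},\dots,\v_{t,n}$ iid of law $\mu_t$) shifts the problem to the tilted sum. When $\vec L/n$ lies in the open image $U \coloneqq \nabla\log M(\{|t| \leq c_0/4\})$, which contains all $\vec L$ with $|\vec L - n\vec\mu| \leq cn$, I pick $t$ so that $\E \v_t = \vec L/n$; the tilted variable remains non-degenerate with exponential tails, so the uniform local limit bound applied to $\v_t$ gives $\P(\v_{t,[1,n]} = \vec L) \ll n^{-d/2}$, while the prefactor $M(t)^n e^{-t\cdot\vec L} = \exp(-n I^*(\vec L/n))$ (with $I^*$ the Legendre transform of $\log M$) contributes the Gaussian weight $\exp(-c|\vec L - n\vec\mu|^2/n)$ via the quadratic expansion of $I^*$ near $\vec\mu$. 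For $|\vec L - n\vec\mu| \gg n$, so that $\vec L/n \notin U$, I instead saturate at $t = (c_0/4)(\vec L - n\vec\mu)/|\vec L - n\vec\mu|$; the prefactor then decays like $\exp(-c|\vec L - n\vec\mu|)$, and the uniform local limit bound on the tilted variable (which holds for any target $\vec L$, not just the tilted mean) again controls the residual by $n^{-d/2}$. The main technical obstacle is verifying that the constants in the uniform local limit bound are uniform over the tilted family $\{\mu_t : |t| \leq c_0/4\}$; this requires uniform positive-definiteness of the tilted covariance and a uniform bound $|\phi_t(\xi)| \leq 1 - c'$ away from $\xi = 0$, both of which follow from continuity arguments on the compact parameter ball together with the fact that tilting preserves the support of $\mu$.
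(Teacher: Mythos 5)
Your proposed proof is correct, and in substance it is the same Fourier-plus-tilting argument as the paper's, just repackaged. The paper proves (i) by Fourier inversion
$$\P(\v_{[1,n]} = \vec L) = \frac{1}{(2\pi)^d}\int_{[-\pi,\pi]^d} M(i\vec t)^n e^{-i\vec t\cdot\vec L}\,d\vec t$$
and then shifts the contour $\vec t \mapsto \vec t + i\vec\lambda$: writing $M(i\vec t + \vec\lambda)^n e^{-\vec\lambda\cdot\vec L} = M(\vec\lambda)^n e^{-\vec\lambda\cdot\vec L}\,\phi_{\vec\lambda}(\vec t)^n$ with $\phi_{\vec\lambda}$ the characteristic function of the $\vec\lambda$-tilted law, the contour shift is \emph{precisely} your exponential-tilting identity $\P(\v_{[1,n]} = \vec L) = M(\vec\lambda)^n e^{-\vec\lambda\cdot\vec L}\,\P(\v_{\vec\lambda,[1,n]} = \vec L)$, and the $L^1$ bound on the shifted Fourier integral is precisely your uniform local limit theorem applied to the tilted law. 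The only genuine organizational differences are: (a) you establish (ii) directly via the one-dimensional Markov/Chernoff bound and a union bound over directions, whereas the paper simply sums (i) over $\vec L$ and applies the integral test; and (b) by working with the holomorphic MGF $M(i\vec t + \vec\lambda)$ in a single integral the paper gets the uniformity in the tilt parameter for free from a single Taylor expansion $|M(i\vec t + \vec\lambda)| \leq \exp(\vec\lambda\cdot\vec\mu - c|\vec t|^2 + O(|\vec\lambda|^2))$, whereas your separation into ``LCLT for $\mu_t$'' plus ``tilting prefactor'' requires the compactness argument you flag (uniform spectral gap of $\phi_t$ away from $0$ and uniform lower bound on $\Sigma_t$ over $\{|t| \leq c_0/4\}$) to carry out cleanly. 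Both are standard and both work; the paper's is marginally more economical for exactly the reason you identify as the ``main technical obstacle'' in yours. One small remark: your choice of saturating tilt at $t = (c_0/4)(\vec L - n\vec\mu)/|\vec L - n\vec\mu|$ in the large-deviation regime matches the paper's $\vec\lambda = c(\vec L - n\vec\mu)/|\vec L - n\vec\mu|$, so the exponential tail of $G_n$ is recovered in the same way.
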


Thus, for instance for any $n \in \N$, we have
$$ \P\left( |\Geom(2)^n| = L \right) \ll \frac{1}{\sqrt{n+1}} G_n( c(L-2n))$$
for every $L \in \Z$, and
$$ \P\left( \left||\Geom(2)^n| - 2n\right| \geq \lambda \right) \ll G_n(c \lambda).$$
for any $\lambda \geq 0$.

\begin{proof}  We use the Fourier-analytic (and complex-analytic) method.  We may assume that $n$ is positive, since the claim is trivial for $n=0$.
We begin with (i).  Let $S$ denote the complex strip $S \coloneqq \{ z \in \C: |\mathrm{Re}(z)| < c_0 \}$, then we can define the (complexified) moment generating function $M \colon S^d \to \C$ by the formula
$$ M(z_1,\dots,z_d) \coloneqq \E \exp( (z_1,\dots,z_d) \cdot \v ),$$
where $\cdot$ is the usual bilinear dot product.
From \eqref{pvl} and Morera's theorem one verifies that this is a well-defined holomorphic function of $d$ complex variables on $S^d$, which is periodic with respect to the lattice $(2\pi i\Z)^d$.  By Fourier inversion, we have
$$ \P( \v_{[1,n]} = \vec L) = \frac{1}{(2\pi)^d} \int_{[-\pi,\pi]^d} M\left( i\vec t \right)^n \exp\left( - i \vec t \cdot \vec L \right)\ d\vec t.$$
By contour shifting, we then have
$$ \P( \v_{[1,n]} = \vec L) = \frac{1}{(2\pi)^d} \int_{[-\pi,\pi]^d} M\left( i\vec t + \vec \lambda\right)^n \exp\left( - (i\vec t + \lambda) \cdot \vec L \right)\ d\vec t$$
 whenever $\vec \lambda = (\lambda_1,\dots,\lambda_d) \in (-c_0,c_0)^d$. By the triangle inequality, we thus have
$$ \P( \v_{[1,n]} = \vec L) \ll \int_{[-\pi,\pi]^d} \left|M\left( i\vec t + \vec \lambda\right)\right|^n \exp\left( - \vec \lambda \cdot \vec L \right)\ d\vec t.$$

From Taylor expansion and the non-degeneracy condition we have
$$ M(\vec z) = \exp\left( \vec z \cdot \vec \mu + \frac{1}{2} \Sigma(\vec z) + O(|\vec z|^3) \right)$$
for all $\vec z \in S^d$ sufficiently close to $0$, where $\Sigma$ is a positive definite quadratic form (the covariance matrix of $\v$).  From the non-degeneracy condition we also see that $|M(i\vec t)| < 1$ whenever $\vec t \in [-\pi,\pi]^d$ is not identically zero, hence by continuity $|M(i\vec t + \vec \lambda)| \leq 1-c$ whenever $\vec t \in [-\pi,\pi]^d$ is bounded away from zero and $\vec \lambda$ is sufficiently small.  This implies the estimates
$$ |M(i\vec t + \vec \lambda )| \leq \exp\left( \vec \lambda \cdot \vec \mu - c |\vec t|^2 + O( |\vec \lambda|^2) \right)$$
for all $\vec t \in [-\pi,\pi]^d$ and all sufficiently small $\vec \lambda \in \R^d$.  Thus we have
\begin{align*}
 \P( \v_{[1,n]} = \vec L) &\ll \int_{[-\pi,\pi]^d} \exp\left( - \vec \lambda \cdot (\vec L-n\vec \mu) - c n|\vec t|^2 + O( n |\vec \lambda|^2 ) \right)\ d\vec t \\
&\ll n^{-1/2} \exp\left( - \vec \lambda \cdot (\vec L-n\vec \mu) + O( n |\vec \lambda|^2 ) \right).
\end{align*}
If $|\vec L-n\vec \mu| \leq n$, we can set $\vec \lambda \coloneqq c(\vec L-n\vec \mu) / n$ for a sufficiently small $c$ and obtain the claim; otherwise if $|\vec L-n\vec \mu| > n$ we set $\vec \lambda \coloneqq c(\vec L-n\vec \mu)/|\vec L-n\vec \mu|$ for a sufficiently small $c$ and again obtain the claim.  This gives (i), and the claim (ii) then follows from summing in $\vec L$ and applying the integral test.
\end{proof}

\begin{remark} Informally, the above lemma asserts that as a crude first approximation we have
\begin{equation}\label{ancl-0}
 \v_{[1,n]} \approx n\vec \mu + \Unif( \{ k \in \Z^d: k = O(\sqrt{n}) \} ),
\end{equation}
and in particular
\begin{equation}\label{ancl}
 |\Geom(2)^n| \approx \Unif( \Z \cap [2n - O(\sqrt{n}), 2n + O(\sqrt{n})] ),
\end{equation}
thus refining \eqref{lln}.   The reader may wish to use this heuristic for subsequent arguments (for instance, in heuristically justifying \eqref{snn-2}).
\end{remark}

\section{Reduction to stabilisation of first passage}\label{main-reduce}

In this section we show how Theorem \ref{main-syr} follows from Proposition \ref{transport}.  In fact we show that Proposition \ref{transport} implies a stronger claim\footnote{We thank the anonymous referee for suggesting this formulation of the main theorem.} :

\begin{theorem}[Alternate form of main theorem]\label{main-alt}  For $N_0 \geq 2$ and $x \geq 2$, one has
$$ \frac{1}{\log x}  \sum_{N \in 2\N+1 \cap [1,x]: \Syr_{\min}(N) > N_0} \frac{1}{N} \ll \frac{1}{\log^c N_0} $$
or equivalently
$$ \P( \Syr_{\min}( \Log(2\N+1 \cap [1,x]) ) \leq N_0 ) \geq 1 - O\left( \frac{1}{\log^c N_0} \right).$$
In particular, by \eqref{c-ident}, we have
$$ \P( \Col_{\min}( \Log(\N+1 \cap [1,x]) ) \leq N_0 ) \geq 1 - O\left( \frac{1}{\log^c N_0} \right)$$
for all $x \geq 2$.
\end{theorem}

In other words, for $N_0 \geq 2$, one has $\mathrm{Syr}_{\min}(N) \leq N_0$ for all $N$ in a set of odd natural numbers of (lower) logarithmic density $\frac{1}{2} - O( \log^{-c} N_0)$, and one also has $\mathrm{Col}_{\min}(N) \leq N_0$ for all $N$ in a set of positive natural numbers of (lower) logarithmic density $1 - O( \log^{-c} N_0)$.

\begin{proof}  We may assume that $N_0$ is larger than any given absolute constant, since the claim is trivial for bounded $N_0$.  Let $E_{N_0} \subset 2\N+1$ denote the set
$$ E_{N_0} \coloneqq \{ N \in 2\N+1: \Syr_{\min}(N) \leq N_0 \}$$
of starting positions $N$ of Syracuse orbits that reach $N_0$ or below.   Let $\alpha$ be defined by \eqref{alpha-def-const}, let $x \geq 2$, and let $\mathbf{N}_y$ be the random variables from Proposition \ref{transport}.  Let $B_x = B_{x,N_0}$ denote the event that $T_x(\mathbf{N}_{x^\alpha}) < +\infty$ and $\Pass_{x}(\mathbf{N}_{x^{\alpha}}) \in E_{N_0}$.  Informally, this is the event that the Syracuse orbit of $\mathbf{N}_{x^\alpha}$ reaches $x$ or below, and then reaches $N_0$ or below.  (For $x < N_0$, the latter condition is automatic, while for $x \geq N_0$, it is the former condition which is redundant.)

Observe that if $T_x( \mathbf{N}_{x^{\alpha^2}} ) < +\infty$ and $\Pass_x(\mathbf{N}_{x^{\alpha^2}}) \in E_{N_0}$, then
$$ T_{x^\alpha}( \mathbf{N}_{x^{\alpha^2}} ) \leq T_x( \mathbf{N}_{x^{\alpha^2}} ) < +\infty$$
and
$$ \Syr^\N(\Pass_x(\mathbf{N}_{x^{\alpha^2}})) \subset \Syr^\N(\Pass_{x^\alpha}(\mathbf{N}_{x^{\alpha^2}})) $$
which implies that
$$ \Syr_{\min}( \Pass_{x^\alpha}(\mathbf{N}_{x^{\alpha^2}}) ) \leq \Syr_{\min}( \Pass_x(\mathbf{N}_{x^{\alpha^2}})  ) \leq N_0.$$
In particular, the event $B_{x^\alpha}$ holds in this case.  From this, \eqref{fail}, and \eqref{tv}, \eqref{tv-2} we have
\begin{align*}
 \P( B_{x^\alpha}) &\geq \P( \Pass_{x}(\mathbf{N}_{x^{\alpha^2}}) \in E_{N_0} \wedge T_{x}( \mathbf{N}_{x^{\alpha^2}} ) < +\infty ) \\
&\geq \P( \Pass_{x}(\mathbf{N}_{x^{\alpha^2}}) \in E_{N_0} ) - O( x^{-c} ) \\
&\geq \P( \Pass_{x}(\mathbf{N}_{x^{\alpha}}) \in E_{N_0} ) - O( \log^{-c} x ) \\
&\geq \P( B_x ) - O( \log^{-c} x )
\end{align*}
whenever $x$ is larger than a suitable absolute constant (note that the $O(x^{-c})$ error can be absorbed into the $O(\log^{-c} x)$ term). In fact the bound holds for all $x \geq 2$, since the estimate is trivial for bounded values of $x$.

Let $J = J(x,N_0)$ be the first natural number such that the quantity $y \coloneqq x^{\alpha^{-J}}$ is less than $N_0^{1/\alpha}$.  Since $N_0$ is assumed to be large, we then have (by replacing $x$ with $y^{\alpha^{j-2}}$ in the preceding estimate) that
$$
 \P( B_{y^{\alpha^{j-1}}} ) \geq
 \P( B_{y^{\alpha^{j-2}}} )
- O( (\alpha^j \log y)^{-c} )
$$
for all $j=1,\dots,J$.  The event $B_{y^{\alpha^{-1}}}$ occurs with probability $1 - O(y^{-c})$, thanks to \eqref{fail} and the fact that $\mathbf{N}_{y} \leq y^\alpha \leq N_0$.  Summing the telescoping series, we conclude that
$$
 \P( B_{y^{\alpha^{J-1}}} ) \geq 1 - O( \log^{-c} y )$$
(note that the $O(y^{-c})$ error can be absorbed into the $O( \log^{-c} y )$ term).    By construction, $y \geq N_0^{1/\alpha^2}$ and $y^{\alpha^J} = x$, so
$$
 \P( B_{x^{1/\alpha}} ) \geq 1 - O( \log^{-c} N_0 ).$$
If $B_{x^{1/\alpha}}$ holds, then $\Pass_{x^{1/\alpha}}( \mathbf{N}_x )$ lies in the Syracuse orbit $\Syr^\N(\mathbf{N}_x)$, and thus $\Syr_{\min}(\mathbf{N}_x) \leq \Syr_{\min}(\Pass_{x^{1/\alpha}}( \mathbf{N}_x )) \leq N_0$.  We conclude that for any $x \geq 2$, one has
$$ \P( \Syr_{\min}(\mathbf{N}_x) > N_0 ) \ll \log^{-c} N_0.$$
By definition of $\mathbf{N}_x$ (and using the integral test to sum the harmonic series $\sum_{N \in 2\N+1 \cap [x,x^\alpha]} \frac{1}{N}$), we conclude that
\begin{equation}\label{loc}
 \sum_{N \in 2\N+1 \cap [x,x^\alpha]: \Syr_{\min}(N) > N_0} \frac{1}{N} \ll \frac{1}{\log^c N_0} \log x
\end{equation}
for all $x \geq 2$.  Covering the interval $2\N+1 \cap [1,x]$ by intervals of the form $2\N+1 \cap [y,y^\alpha]$ for various $y$, we obtain the claim.
\end{proof}

Now let $f\colon 2\N+1 \to [0,+\infty)$ be such that $\lim_{N \to \infty} f(N) = +\infty$.  Set $\tilde f(x) \coloneqq \inf_{N \in 2\N+1: N \geq x} f(N)$, then $\tilde f(x) \to \infty$ as $x \to \infty$.  Applying Theorem \ref{main-alt} with $N_0 \coloneqq \tilde f(x)$, we conclude that
$$ \sum_{N \in 2\N+1 \cap [1,x]: \Syr_{\min}(N) > f(N)} \frac{1}{N} \ll \frac{1}{\log^c \tilde f(x)} \log x$$
for all sufficiently large $x$.  Since $\frac{1}{\log^c\tilde f(x)}$ goes to zero as $x \to \infty$, we conclude from telescoping series that the set $\{ N \in 2\N+1: \Syr_{\min}(N) > f(N) \}$ has zero logarithmic density, and Theorem \ref{main-syr} follows.

\section{$3$-adic distribution of iterates}\label{rach-sec}

In this section we establish Proposition \ref{rach}.  Let $n, \mathbf{N}, c_0, n'$ be as in that proposition; in particular, $n' \geq (2+c_0) n$. In this section we allow implied constants in the asymptotic notation, as well as the constants $c>0$, to depend on $c_0$.

We first need a tail bound on the size of the $n$-Syracuse valuation $\vec a^{(n)}(\mathbf{N})$:

\begin{lemma}[Tail bound]\label{tail}  We have
$$ \P( |\vec a^{(n)}(\mathbf{N})| \geq n' ) \ll 2^{-cn}.$$
\end{lemma}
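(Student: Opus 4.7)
The plan is to reduce the desired tail bound on $|\vec a^{(n)}(\mathbf{N})|$ to the Chernoff-type bound of Lemma \ref{chern} applied to $|\Geom(2)^n|$, by combining hypothesis \eqref{boy} with a combinatorial identity that matches the distribution of $\vec a^{(n)}$ with that of $\Geom(2)^n$ up to scale $2^m$.

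First I would verify that the set $S_m \coloneqq \{N \in 2\N+1 : |\vec a^{(n)}(N)| \geq m\}$ is a union of odd residue classes modulo $2^m$. Using \eqref{a-form} and Lemma \ref{iter-lem}, the condition $\vec a^{(n)}(N) = \vec a$ amounts (after clearing denominators and requiring the image $\Aff_{\vec a}(N)$ to be odd) to a congruence of the form $3^n N \equiv 2^{|\vec a|}(1 - F_n(\vec a)) \pmod{2^{|\vec a|+1}}$; since $3$ is a unit modulo any power of $2$, this pins $N$ down to a single odd residue class modulo $2^{|\vec a|+1}$. For $|\vec a| \leq m-1$ this class is a union of residue classes modulo $2^m$, so taking the union over all such $\vec a$ and then complementing within the odd residues exhibits $S_m$ as claimed.

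With $S_m$ so described, hypothesis \eqref{boy} together with \eqref{tv-2} yields $\P(\mathbf{N} \in S_m) = \P(\Unif(2\Z/2^m\Z+1) \in S_m) + O(2^{-m})$. A direct count — for each $\vec a$ of length $n$ with $|\vec a| = k \leq m-1$ the fibre occupies $2^{m-k-1}$ out of $2^{m-1}$ odd residues modulo $2^m$, contributing weight $2^{-k}$, which coincides with $\P(\Geom(2)^n = \vec a)$ — then identifies $\P(\Unif(2\Z/2^m\Z+1) \in S_m) = \P(|\Geom(2)^n| \geq m)$, reflecting the Haar-measure picture of Remark \ref{cat}.

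Finally I would apply Lemma \ref{chern}(ii) to the iid sum $\Geom(2)^n$, which has mean $2n$: this gives $\P(|\Geom(2)^n| \geq m) \ll G_n(c(m-2n))$. The hypothesis $m \geq (2+c_0)n$ forces $m - 2n \geq c_0 n \leq n$, so $G_n(cc_0 n) = \exp(-c^2 c_0^2 n) + \exp(-c c_0 n) \ll 2^{-c'n}$ for some $c'>0$ depending only on $c_0$. Absorbing the negligible $O(2^{-m}) = O(2^{-(2+c_0)n})$ error then gives the claim. The only conceptual step is the measurability of $S_m$ with respect to $N \bmod 2^m$; once this is in hand, the remaining estimates are routine.
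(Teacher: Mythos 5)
Your argument is correct, and it is structured differently from the paper's. The paper proceeds by a first-crossing decomposition: it splits $\P(|\vec a^{(n)}(\mathbf{N})|\geq m)=\sum_{k=0}^{n-1}\P(\a_{[1,k]}<m\leq\a_{[1,k+1]})$, shows that conditioning on $\a_1,\dots,\a_k$ together with $\a_{[1,k+1]}\geq m$ pins $\mathbf{N}$ to a single residue class mod $2^m$, counts the $\binom{m-1}{k}$ admissible prefixes, and finishes with Stirling. You instead prove a measurability statement — that $S_m=\{N:|\vec a^{(n)}(N)|\geq m\}$ is a union of odd residue classes mod $2^m$, because for $|\vec a|<m$ the fibre $\{N:\vec a^{(n)}(N)=\vec a\}$ is a single odd class mod $2^{|\vec a|+1}\mid 2^m$ — and then observe that under $\Unif(2\Z/2^m\Z+1)$ these fibres have exactly the $\Geom(2)^n$ probabilities $2^{-|\vec a|}$, so $\P(\Unif\in S_m)=\P(|\Geom(2)^n|\geq m)$ identically. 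This lets you cite Lemma \ref{chern}(ii) directly rather than reproving a Chernoff-type binomial bound via Stirling, and it makes the use of hypothesis \eqref{boy} a single clean total-variation comparison via \eqref{tv-2}. The two proofs are of comparable length, but yours isolates the key structural fact (the $2^m$-measurability of $S_m$ and the exact distribution match on $\{|\vec a|<m\}$) more explicitly, which is the same fact the paper exploits somewhat implicitly in its own Section \ref{rach-sec} estimate \eqref{mup}. One small typo: ``$m-2n\geq c_0 n\leq n$'' is garbled — you only need $m-2n\geq c_0 n$ and the evaluation $G_n(cc_0 n)\ll 2^{-c'n}$; the argument is unaffected.
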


\begin{proof}  Write $\vec a^{(n)}(\mathbf{N}) = (\a_1,\dots,\a_n)$, then we may split
$$ \P( |\vec a^{(n)}(\mathbf{N})| \geq n' ) = \sum_{k=0}^{n-1}  \P( \a_{[1,k]} < n' \leq \a_{[1,k+1]} )$$
(using the summation convention \eqref{sum})
and so it suffices to show that
$$ \P( \a_{[1,k]} < n' \leq \a_{[1,k+1]} ) \ll 2^{-cn}$$
for each $0 \leq k \leq n-1$.

From Lemma \ref{iter-lem} and \eqref{a-form} we see that
$$ 3^{k+1} 2^{- \a_{[1,k+1]}} \mathbf{N} + \sum_{i=1}^{k+1} 3^{k+1-i} 2^{-\a_{[i,k+1]}} $$
is an odd integer, and thus
$$ 3^{k+1} \mathbf{N} + \sum_{i=1}^{k+1} 3^{k+1-i} 2^{\a_{[1,i-1]}}$$
is a multiple of $2^{\a_{[1,k+1]}}$.  In particular, when the event $\a_{[1,k]} < n' \leq \a_{[1,k+1]}$ holds, one has
$$ 3^{k+1} \mathbf{N} + \sum_{i=1}^{k+1} 3^{k+1-i} 2^{\a_{[1,i-1]}} = 0 \mod 2^{n'}.$$
Thus, if one conditions to the event $\a_j = a_j, j=1,\dots,k$ for some positive integers $a_1,\dots,a_k$, then $\mathbf{N}$ is constrained to a single residue class $b \mod 2^{n'}$ depending on $a_1,\dots,a_k$ (because $3^{k+1}$ is invertible in the ring $\Z/2^{n'}\Z$).  From \eqref{boy}, \eqref{tv-1} we have the quite crude estimate
$$ \P( \mathbf{N} = b \mod 2^{n'} ) \ll 2^{-n'}$$
and hence
$$ \P( \a_{[1,k]} \leq n' < \a_{[1,k+1]} ) \ll \sum_{a_1,\dots,a_k \in \N+1: a_{[1,k]} < n'} 2^{-n'}.$$
The tuples $(a_1,\dots,a_k)$ in the above sum are in one-to-one correspondence with the $k$-element subsets $\{ a_1, a_{[1,2]},\dots,a_{[1,k]}\}$ of $\{1,\dots,n'-1\}$, and hence have cardinality $\binom{n'-1}{k}$, thus
$$ \P( \a_{[1,k]} < n' \leq \a_{[1,k+1]} ) \ll 2^{-n'} \binom{n'-1}{k}.$$
Since $k \leq n-1$ and $n' \geq (2+c_0) n$, the right-hand side is $O(2^{-cn})$ by Stirling's formula (one can also use the Chernoff inequality for the sum of $n'-1$ Bernoulli random variables $\mathbf{Ber}(\frac{1}{2})$, or Lemma \ref{chern}).  The claim follows.
\end{proof}

From Lemma \ref{chern} we also have
$$ \P( |\Geom(2)^n| \geq n' ) \ll 2^{-cn}.$$
From \eqref{tv-1} and the triangle inequality we therefore have
$$ d_\TV(\vec a^{(n)}(\mathbf{N}), \Geom(2)^n) =
\sum_{\vec a \in (\N+1)^n: |\vec a| < m} |\P(\vec a^{(n)}(\mathbf{N})=\vec a) - \P(\Geom(2)^n=\vec a)| + O( 2^{-cn} ).$$
From Definition \ref{geom} we have
$$ \P(\Geom(2)^n=\vec a) = 2^{-|\vec a|} $$
so it remains to show that
\begin{equation}\label{mup}
 \sum_{\vec a \in (\N+1)^n: |\vec a| < m} |\P(\vec a^{(n)}(\mathbf{N})=\vec a) - 2^{-|\vec a|}| \ll 2^{-cn}.
\end{equation}
By Lemma \ref{iter-lem}, the event $\vec a^{(n)}(\mathbf{N})=\vec a$ occurs precisely when $\Aff_{\vec a}(\mathbf{N})$ is an odd integer, which by \eqref{a-form} we may write (for $\vec a = (a_1,\dots,a_n)$) as
$$
3^n 2^{-a_{[1,n]}} \mathbf{N} + 3^{n-1} 2^{-a_{[1,n]}} + 3^{n-2} 2^{-a_{[2,n]}} + \dots + 2^{-a_n} \in 2\N+1.$$
Equivalently one has
$$ 3^n \mathbf{N} = - 3^{n-1} - 3^{n-2} 2^{a_1} - \dots - 2^{a_{[1,n-1]}} + 2^{|\vec a|} \mod 2^{|\vec a|+1}.$$
This constrains $\mathbf{N}$ to a single odd residue class modulo $2^{|\vec a|+1}$.  For $|\vec a| < n'$, the probability of falling in this class can be computed using \eqref{boy}, \eqref{tv-1} as $2^{-|\vec a|} + O( 2^{-n'} )$.  The left-hand side of \eqref{mup} is then bounded by
$$ \ll 2^{-n'} \# \{ \vec a \in (\N+1)^n: |\vec a| < n' \} = 2^{-n'} \binom{n'-1}{n}.$$
The claim now follows from Stirling's formula (or Chernoff's inequality), as in the proof of Lemma \ref{tail}.  This completes the proof of Proposition \ref{rach}.

\section{Reduction to fine scale mixing of the $n$-Syracuse offset map}\label{main-sec}

We are now ready to derive Proposition \ref{transport} (and thus Theorem \ref{main}) assuming Proposition \ref{tv-bound}.  Let $x$ be sufficiently large.  We take $y$ to be either $x^\alpha$ or $x^{\alpha^2}$.  From the heuristic \eqref{snn} (or \eqref{snn-2}) we expect the first passage time $\Pass_x(\mathbf{N}_y)$ to be roughly
$$ \Pass_x(\mathbf{N}_y) \approx \frac{\log \mathbf{N}_y / x}{\log(4/3)}$$
with high probability.  Now introduce the quantities
\begin{equation}\label{no}
 n_0 \coloneqq \left\lfloor \frac{\log x}{10 \log 2} \right\rfloor
\end{equation}
(so that $2^{n_0} \asymp x^{0.1}$) and
\begin{equation}\label{mon}
 m_0 \coloneqq \left\lfloor \frac{\alpha-1}{100} \log x \right\rfloor.
\end{equation}
Since the random variable $\mathbf{N}_y$ takes values in $[y,y^\alpha]$, we see from \eqref{alpha-def-const} that we would expect the bounds
\begin{equation}\label{heur}
 m_0 \leq T_x(\mathbf{N}_y)  \leq n_0
\end{equation}
to hold with high probability.  We will use these parameters $m_0, n_0$ to help control the distribution of $T_x(\mathbf{N}_y)$ and $\Pass_x(\mathbf{N}_y)$ in order to prove \eqref{fail}, \eqref{tv}.

\begin{figure} [t]
\centering
\includegraphics[width=4in]{./falling.png}
\caption{The Syracuse orbit $n \mapsto \mathrm{Syr}^n(\mathbf{N}_y)$, where the vertical axis is drawn in shifted log-scale.  The diagonal lines have slope $-\log(4/3)$.  For times $n$ up to $n_0$, the orbit usually stays close to the dashed line, and hence usually lies between the two dotted diagonal lines; in particular, the first passage time $T_x(\mathbf{N}_y)$ will usually lie in the interval $I_y$.  Outside of a rare exceptional event, for any given $n \in I_y$, $\Syr^{n-m}(\mathbf{N}_y)$ will lie in $E'$ if and only if $n = T_x(\mathbf{N}_y)$ and $\Syr^n(\mathbf{N}_y)$ lies in $E$; equivalently, outside of a rare exceptional event, $\Pass_x(\mathbf{N}_y)$ lies in $E$ if and only if $\Syr^{n-m}(\mathbf{N}_y)$ lies in $E'$ for precisely one $n \in I_y$.}
\label{fig:falling}
\end{figure}

We begin with the proof of \eqref{fail}.  Let $n_0$ be defined by \eqref{no}. Since $\mathbf{N}_y \equiv \mathbf{Log}(2\N+1 \cap [y,y^\alpha])$,
a routine application of the integral test reveals that
$$d_\TV( \mathbf{N}_y \mod 2^{3n_0}, \Unif((2\Z+1)/2^{3n_0}\Z)) \ll 2^{-3n_0} $$
(with plenty of room to spare), hence by Proposition \ref{rach}
\begin{equation}\label{cho}
 d_\TV( \vec a^{(n_0)}(\mathbf{N}_y), \Geom(2)^{n_0} ) \ll 2^{-c n_0}.
\end{equation}
In particular, by \eqref{tv-2} and Lemma \ref{chern} we have
\begin{equation}\label{huh}
 \P( |\vec a^{(n_0)}(\mathbf{N}_y)| \leq 1.9 n_0 ) \leq \P( |\Geom(2)^{n_0}| \leq 1.9 n_0 ) + O(2^{-cn_0}) \ll 2^{-cn_0} \ll x^{-c}
\end{equation}
(recall we allow $c$ to vary even within the same line).
On the other hand, from \eqref{s-iter}, \eqref{fn-def} we have
$$ \Syr^{n_0}(\mathbf{N}_y) \leq 3^{n_0} 2^{-|\vec a^{(n_0)}(\mathbf{N}_y)|} \mathbf{N}_y + O( 3^{n_0} ) \leq 3^{n_0} 2^{-|\vec a^{(n_0)}(\mathbf{N}_y)|} x^{\alpha^3} + O(3^{n_0}) $$
and hence if $|\vec a^{(n_0)}(\mathbf{N}_y)| > 1.9 n$ then
$$\Syr^{n_0}(\mathbf{N}_y)  \ll 3^{n_0} 2^{-1.9 n_0} x^{\alpha^3} + O(3^{n_0}).$$
From \eqref{no}, \eqref{alpha-def-const} and a brief calculation, the right-hand side is $O(x^{0.99})$ (say).  In particular, for $x$ large enough, we have
$$ \Syr^{n_0}(\mathbf{N}_y)  \leq x,$$
and hence $T_x(\mathbf{N}_y) \leq n_0 < +\infty$ whenever $|\vec a^{(n_0)}(\mathbf{N}_y)| > 1.9 n_0$ (cf., the upper bound in \eqref{heur}).  The claim \eqref{fail} now follows from \eqref{huh}.

\begin{remark} This argument already establishes that $\Syr_{\min}(N) \leq N^\theta$ for almost all $N$ for any $\theta > 1/\alpha$; by optimising the numerical exponents in this argument one can eventually recover the results of Korec \cite{korec} mentioned in the introduction.  It also shows that most odd numbers do not lie in a periodic Syracuse orbit, or more precisely that
$$ \P( \Syr^n(\mathbf{N}_y) = \mathbf{N}_y \hbox{ for some } n \in \N+1 ) \ll x^{-c}.$$
Indeed, the above arguments show that outside of an event of probability $x^{-c}$, one has $\Syr^{\mathbf{m}}(\mathbf{N}_y) \leq x$ for some $\mathbf{m} \leq n_0$, which we can assume to be minimal amongst all such $\mathbf{m}$.  If $\Syr^n(\mathbf{N}_y) = \mathbf{N}_y$ for some $n$, we then have
\begin{equation}\label{ny}
\mathbf{N}_y = \Syr^{n(\mathbf{M})-\mathbf{m}}(\mathbf{M})
\end{equation}
for $\mathbf{M} \coloneqq \Syr^\mathbf{m}(\mathbf{N}_y) \in [1,x]$ that generates a periodic Syracuse orbit with period $n(\mathbf{M})$.  (This period $n(\mathbf{M})$ could be extremely large, and the periodic orbit could attain values much larger than $x$ or $y$, but we will not need any upper bounds on the period in our arguments, other than that it is finite.) The number of possible pairs $(\mathbf{M},\mathbf{m})$ obtained in this fashion is $O(xn_0)$.  By \eqref{ny}, the pair $(\mathbf{M},\mathbf{m})$ uniquely determines $\mathbf{N}_y$.  Thus, outside of the aforementioned event, a periodic orbit is only possible for at most $O(xn_0)$ possible values of $\mathbf{N}_y$; as this is much smaller than $y$, we thus see that a periodic orbit is only attained with probability $O(x^{-c})$, giving the claim.  It is then a routine matter to then deduce that almost all positive integers do not lie in a periodic Collatz orbit; we leave the details to the interested reader.
\end{remark}

Now we establish \eqref{tv}.  By \eqref{tv-2}, it suffices to show that for $E \subset 2\N+1 \cap [1,x]$, that
\begin{equation}\label{pye}
 \P( \Pass_x( \mathbf{N}_y ) \in E ) = \left(1 + O( \log^{-c} x )\right) Q + O( \log^{-c} x )
\end{equation}
for some quantity $Q$ that can depend on $x,\alpha,E$ but is independent of whether $y$ is equal to $x^\alpha$ or $x^{\alpha^2}$ (note that this bound automatically forces $Q = O(1)$ when $x$ is large, so the first error term $O(\log^{-c} x) Q$ on the right-hand side may be absorbed into the second term $O(\log^{-c} x)$).  The strategy is to manipulate the left-hand side of \eqref{pye} into an expression that involves the Syracuse random variables $\Syrac(\Z/3^n\Z)$ for various $n$ (in a range $I_y$ depending on $y$) plus a small error, and then appeal to Proposition \ref{tv-bound} to remove the dependence on $n$ and hence on $y$ in the main term.  The main difficulty is that the first passage location $\Pass_x(\mathbf{N}_y)$ involves a first passage time $n = T_x(\mathbf{N}_y)$ whose value is not known in advance; but by stepping back in time by a fixed number of steps $m_0$, we will be able to express the left-hand side of \eqref{pye} (up to negligible errors) without having to explicitly refer to the first passage time.

The first step is to establish the following approximate formula for the left-hand side of \eqref{pye}.

\begin{proposition}[Approximate formula]  Let $E \subset 2\N+1 \cap [1,x]$ and $y = x^\alpha, x^{\alpha^2}$.  Then we have
\begin{equation}\label{approx-form}
  \P( \Pass_x( \mathbf{N}_y ) \in E )  =  \sum_{n \in I_y} \sum_{\vec a \in {\mathcal A}^{(n-m_0)}} \sum_{M \in E'} \P( \Aff_{\vec a}(\mathbf{N}_y) = M ) + O( \log^{-c} x  )
	\end{equation}
where $I_y$ is the interval
\begin{equation}\label{iy-def}
 I_y \coloneqq \left[\frac{\log( y / x )}{\log \frac{4}{3}} + \log^{0.8} x, \frac{\log( y^\alpha / x )}{\log \frac{4}{3}} - \log^{0.8} x\right],
\end{equation}
$E'$ is the set of odd natural numbers $M \in 2\N+1$ such that $T_x(M) = m_0$ and $\Pass_x(M) \in E$ with
\begin{equation}\label{lost}
 \exp( - \log^{0.7} x ) (4/3)^{m_0} x \leq M \leq \exp( \log^{0.7} x ) (4/3)^{m_0} x.
\end{equation}
and for any natural number $n'$, ${\mathcal A}^{(n')} \subset (\N+1)^{n'}$ denotes the set of all tuples $(a_1,\dots,a_{n'}) \in (\N+1)^{n'}$ such that
\begin{equation}\label{an-bring}
|a_{[1,n]} - 2n| < \log^{0.6} x
\end{equation}
for all $0 \leq n \leq n'$.
\end{proposition}

A key point in this formula \eqref{approx-form} is that the right-hand side does not involve the passage time $T_x(\mathbf{N}_y)$ or the first passage location $\Pass_x(\mathbf{N}_y)$, and the dependence on whether $y$ is equal to $x^\alpha$ or $x^{\alpha^2}$ is confined to the range $I_y$ of the summation variable $n$, as well as the input $\mathbf{N}_y$ of the affine map $\Aff_{\vec a}$.  (In particular, note that the set $E'$ does \emph{not} depend on $y$.) We also observe from \eqref{iy-def}, \eqref{no}, \eqref{mon} that $I_y \subset [m_0,n_0]$, which is consistent with the heuristic \eqref{heur}.

\begin{proof}  Fix $E$, and write $\vec a^{(n_0)}(\mathbf{N}_y) = (\a_1,\dots,\a_{n_0})$.  From \eqref{cho}, \eqref{tv-2}, and Lemma \ref{chern} we see that for every $0 \leq n \leq n_0$, one has
$$ \P( |\a_{[1,n]} - 2n| \geq \log^{0.6} x ) \ll \exp( - c \log^{0.2} x ).$$
Hence, if ${\mathcal A}^{(n_0)}$ is the set defined in the proposition, we see from the union bound that
\begin{equation}\label{lo}
 \P( \vec a^{(n_0)}(\mathbf{N}_y)  \not \in {\mathcal A}^{(n_0)} ) \ll \log^{-10} x
\end{equation}
(say); this can be viewed as a rigorous analogue of the heuristic \eqref{ancl}.  Hence
$$  \P( \Pass_x( \mathbf{N}_y ) \in E )  =  \P( \Pass_x( \mathbf{N}_y ) \in E \wedge \vec a^{(n_0)}(\mathbf{N}_y) \in {\mathcal A}^{(n_0)} ) +   O( \log^{-c} x  ).$$
Suppose that $\vec a^{(n_0)}(\mathbf{N}_y) \in {\mathcal A}^{(n_0)}$.    For any $0 \leq n \leq n_0$, we have from \eqref{s-iter}, \eqref{fn-bound} that
$$
 \Syr^n(\mathbf{N}_y) = 3^{n} 2^{-\a_{[1,n]}} \mathbf{N}_y + O( 3^{n_0} ) $$
and hence by \eqref{an-bring}, \eqref{no} and some calculation
\begin{equation}\label{snny}
\Syr^n(\mathbf{N}_y) = (1 + O(x^{-0.1})) 3^{n} 2^{-\a_{[1,n]}} \mathbf{N}_y.
\end{equation}
In particular, from \eqref{an-bring} one has
\begin{equation}\label{anb}
\Syr^n(\mathbf{N}_y) = \exp( O( \log^{0.6} x)) (3/4)^n \mathbf{N}_y
\end{equation}
for all $0 \leq n \leq n_0$, which can be viewed as a rigorous version of the heuristic \eqref{snn-2}.  With regards to Figure \ref{fig:falling}, \eqref{anb} asserts that the Syracuse orbit stays close to the dashed line.

As $T_x(\mathbf{N}_y)$ is the first time $n$ for which $\Syr^n(\mathbf{N}_y) \leq x$, the estimate \eqref{anb} gives an approximation
\begin{equation}\label{txy}
 T_x(\mathbf{N}_y) = \frac{\log( \mathbf{N}_y / x )}{\log \frac{4}{3}} + O( \log^{0.6} x);
\end{equation}
note from \eqref{no}, \eqref{alpha-def-const} and a brief calculation that the right-hand side automatically lies between $0$ and $n_0$ if $x$ is large enough.  In particular, if $I_y$ is the interval \eqref{iy-def}, then \eqref{anb} will imply that $T_x(\mathbf{N}_y) \in I_y$ whenever
$$ \mathbf{N}_y \subset [y + 2 \log^{0.8} x, y^\alpha - 2 \log^{0.8} x];$$
a straightforward calculation using the integral test (and \eqref{lo}) then shows that
\begin{equation}\label{tiy}
 \P( T_x(\mathbf{N}_y) \in I_y) = 1 - O( \log^{-c} x ).
\end{equation}
Again, see Figure \ref{fig:falling}.  Note from \eqref{no}, \eqref{mon} that $I_y \subset [m_0,n_0]$; compare with \eqref{heur}.

Now suppose that $n$ is an element of $I_y$.  In particular, $n \geq m_0$.  We observe the following implications:
\begin{itemize}
\item If $T_x(\mathbf{N}_y) = n$, then certainly $T_x( \Syr^{n-m_0}(\mathbf{N}_y) ) = m_0$.
\item Conversely, if $T_x( \Syr^{n-m_0}(\mathbf{N}_y) ) = m_0$ and $\vec a^{(n_0)}(\mathbf{N}_y) \in {\mathcal A}^{(n_0)}$, we have $\Syr^n(\mathbf{N}_y) \leq x < \Syr^{n-1}(\mathbf{N}_y)$, which by \eqref{anb} forces
$$ n = \frac{\log( \mathbf{N}_y / x )}{\log \frac{4}{3}} + O( \log^{0.6} x),$$
which by \eqref{txy}, \eqref{mon} implies that $T_x(\mathbf{N}_y) \geq n - m_0$, and hence
$$ T_x(\mathbf{N}_y) = n - m_0 + T_x(\Syr^{n-m_0}(\mathbf{N}_y)) = n.$$
\end{itemize}
We conclude that for any $n \in I_y$, the event
$$\left( T_x(\mathbf{N}_y) = n\right) \wedge \left( \Pass_x( \mathbf{N}_y ) \in E \right) \wedge \left( \vec a^{(n_0)}(\mathbf{N}_y) \in {\mathcal A}^{(n_0)}\right) $$
holds precisely when the event
$$ B_{n,y} \coloneqq \left( T_x( \Syr^{n-m_0}(\mathbf{N}_y) ) = m_0\right) \wedge \left(\Pass_x(\Syr^{n-m_0}(\mathbf{N}_y)) \in E \right) \wedge \left( \vec a^{(n_0)}(\mathbf{N}_y) \in {\mathcal A}^{(n_0)}\right)$$
does.  From \eqref{tiy} we therefore have the estimate
$$
\P( \Pass_x( \mathbf{N}_y ) \in E )  =  \sum_{n \in I_y} \P(B_{n,y})  +   O( \log^{-c} x  ).
$$
With $E'$ the set defined in the proposition, we observe the following implications:
\begin{itemize}
\item If $B_{n,y}$ occurs, then from \eqref{anb}, \eqref{txy} we have
$$ \Syr^{n-m_0}(\mathbf{N}_y) = \exp( O( \log^{0.6} x)) (3/4)^{T_x(\mathbf{N}_y)-m_0} \mathbf{N}_y = \exp( O( \log^{0.6} x)) (4/3)^{m_0} x$$
and hence
\begin{equation}\label{sor}
 \left( \Syr^{n-m_0}(\mathbf{N}_y) \in E'\right) \wedge \left(\vec a^{(n_0)}(\mathbf{N}_y) \in {\mathcal A}^{(n_0)}\right).
\end{equation}
\item Conversely, if \eqref{sor} holds, then from \eqref{anb} we have
$$ \Syr^{n'}(\mathbf{N}_y) = \exp(O(\log^{0.6} x)) (4/3)^{n-m_0-n'} \Syr^{n-m_0}(\mathbf{N}_y) \geq \exp(O(\log^{0.6} x)) \Syr^{n-m_0}(\mathbf{N}_y) $$
for all $0 \leq n' \leq n-m_0$, and hence by \eqref{lost}
$$ \Syr^{n'}(\mathbf{N}_y) > x$$
for all $0 \leq n' \leq n-m_0$.  We conclude that
$$ T_x(\mathbf{N}_y) = n-m_0 + T_x(\Syr^{n-m_0}(\mathbf{N}_y)) = n $$
thanks to the definition of $E'$, and hence also
$$ \Pass_x(\mathbf{N}_y) = \Pass_x(\Syr^{n-m_0}(\mathbf{N}_y)) \in E.$$
In particular, the event $B_{n,y}$ holds.
\end{itemize}
We conclude that we have the equality of events
$$  B_{n,y}  = \left(\Syr^{n-m_0}(\mathbf{N}_y) \in E'\right) \wedge \left(\vec a^{(n_0)}(\mathbf{N}_y) \in {\mathcal A}^{(n_0)}\right)$$
for any $n \in I_y$.  Since the event $\vec a^{(n_0)}(\mathbf{N}_y) \in {\mathcal A}^{(n_0)}$ is contained in the event $\vec a^{(n-m_0)}(\mathbf{N}_y) \in {\mathcal A}^{(n-m_0)}$, we conclude from \eqref{lo} that
$$  \P( \Pass_x( \mathbf{N}_y ) \in E )  =  \sum_{n \in I_y} \P\left( \left(\Syr^{n-m_0}(\mathbf{N}_y) \in E'\right) \wedge \left(\vec a^{(n-m_0)}(\mathbf{N}_y) \in {\mathcal A}^{(n-m_0)}\right) \right) +   O( \log^{-c} x  ).$$

Suppose that $\vec a = (a_1,\dots,a_{n-m})$ is a tuple in ${\mathcal A}^{(n-m)}$, and $M \in E'$.  From Lemma \ref{iter-lem}, we see that the event $\left(\Syr^{n-m_0}(\mathbf{N}_y) = M\right) \wedge \left(\vec a^{(n-m_0)}(\mathbf{N}_y) = \vec a\right)$ holds if and only if $\Aff_{\vec a}( \mathbf{N}_y) \in E'$, and the claim \eqref{approx-form} follows.
\end{proof}

Now we compute the right-hand side of \eqref{approx-form}.  Let $n \in I_y$, $\vec a \in {\mathcal A}^{(n-m_0)}$, and $M \in E'$. Then by \eqref{a-form}, the event $\Aff_{\vec a}(\mathbf{N}_y) = M$ is only non-empty when
\begin{equation}\label{max}
M = F_{n-m_0}(\vec a) \mod 3^{n-m_0}
\end{equation}
Conversely, if \eqref{max} holds, then $\Aff_{\vec a}(\mathbf{N}_y) = M$ holds precisely when
\begin{equation}\label{this}
 \mathbf{N}_y = 2^{|\vec a|} \frac{M - F_{n-m_0}(\vec a)}{3^{n-m_0}}.
\end{equation}
Note from \eqref{an-bring}, \eqref{fn-bound} that the right-hand side of \eqref{this} is equal to
$$ 2^{2(n-m_0)+O(\log^{0.6} x)} \frac{M + O( 3^{n-m_0} )}{3^{n-m_0}} $$
which by \eqref{lost}, \eqref{no} simplifies to
$$ \exp( O(\log^{0.7} x)) (4/3)^{n} x.$$
Since $n \in I_y$, we conclude from \eqref{iy-def} that the right-hand side of \eqref{this} lies in $[y, y^\alpha]$; from \eqref{max}, \eqref{fn-def} we also see that this right-hand side is a odd integer.  Since $\mathbf{N}_y \equiv \Log( 2\N+1 \cap [y,y^\alpha] )$ and
$$ \sum_{N \in 2\N+1 \cap [y,y^\alpha]} \frac{1}{N} = \left(1 + O\left( \frac{1}{x}\right)\right) \frac{\alpha-1}{2} \log y,$$
we thus see that when \eqref{max} occurs, one has
$$ \P( \Aff_{\vec a}(\mathbf{N}_y) = M ) = \frac{1}{\left(1 + O( \frac{1}{x})\right)\frac{\alpha-1}{2} \log y} 2^{-|\vec a|} \frac{3^{n-m_0}}{M - F_{n-m_0}(\vec a)}.$$
From \eqref{lost}, \eqref{no}, \eqref{fn-bound} we can write
$$ M - F_{n-m_0}(\vec a) = M - O(3^{n_0}) = (1 + O(x^{-c})) M$$
and thus
$$ \P( \Aff_{\vec a}(\mathbf{N}_y) = M ) = \frac{1 + O(x^{-c})}{\frac{\alpha-1}{2} \log y} \frac{2^{-|\vec a|} 3^{n-m_0}}{M}.$$
We conclude that
\begin{align*}
  \P( \Pass_x( \mathbf{N}_y ) \in E )  &= \frac{1 + O(x^{-c})}{\frac{\alpha-1}{2} \log y} \sum_{n \in I_y} 3^{n-m_0} \sum_{\vec a \in {\mathcal A}^{(n-m_0)}} 2^{-|\vec a|} \sum_{M \in E': M = F_{n-m_0}(\vec a) \mod 3^{n-m_0}} \frac{1}{M} \\
	&\quad +   O( \log^{-c} x  ).
	\end{align*}
We will eventually establish the estimate
\begin{equation}\label{zeno}
 3^{n-m_0} \sum_{\vec a \in {\mathcal A}^{(n-m_0)}} 2^{-|\vec a|} \sum_{M \in E': M = F_{n-m_0}(\vec a) \mod 3^{n-m_0}} \frac{1}{M} = Z + O( \log^{-c} x  )
\end{equation}
for all $n \in I_y$, where $Z$ is the quantity
\begin{equation}\label{Z-def}
 Z \coloneqq \sum_{M \in E'} \frac{3^{m_0} \P( M = \Syrac(\Z/3^{m_0}\Z) \mod 3^{m_0})}{M}.
\end{equation}
Since  from \eqref{iy-def} we have
$$ \# I_y = (1 + O( \log^{-c} x  )) \frac{\alpha-1}{\log \frac{4}{3}} \log y,$$
we see that \eqref{zeno} would imply the bound
$$  \P( \Pass_x( \mathbf{N}_y ) \in E )  = (1 + O(\log^{-c} x)) \frac{2}{\log \frac{4}{3}} Z + O(\log^{-c} x)$$
which would give the desired estimate \eqref{pye} since $Z$ does not depend on whether $y$ is equal to $x^\alpha$ or $x^{\alpha^2}$.

It remains to establish \eqref{zeno}.
Fix $n \in I_y$.  The left-hand side of \eqref{zeno} may be written as
\begin{equation}\label{soda}
  \E 1_{(\a_1,\dots,\a_{n-m_0}) \in {\mathcal A}^{(n-m_0)}} c_n( F_{n-m_0}(\a_1,\dots,\a_{n-m_0}) \mod 3^{n-m_0} )
	\end{equation}
where $(\a_1,\dots,\a_{n-m_0})  \equiv \Geom(2)^{n-m_0}$ and $c_n\colon \Z/3^{n-m_0}\Z \to \R^+$ is the function
\begin{equation}\label{cn-def}
 c_n( X ) \coloneqq 3^{n-m_0} \sum_{M \in E': M = X \mod 3^{n-m_0}} \frac{1}{M}.
\end{equation}
We have a basic estimate:

\begin{lemma}\label{loam}  We have $c_n(X) \ll 1$ for all $n \in I_y$ and $X \in \Z/3^{n-m_0}\Z$.
\end{lemma}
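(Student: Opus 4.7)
The plan is to estimate $c_n(X)$ directly by exploiting the narrow size window \eqref{lost} to which every $M \in E'$ is confined, combined with the sparsity imposed by restricting $M$ to the arithmetic progression $X + 3^{n-m_0}\Z$.

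Set $M_\pm \coloneqq \exp(\pm \log^{0.7} x)(4/3)^{m_0} x$. Every $M \in E'$ lies in $[M_-, M_+]$ by \eqref{lost}, so $1/M \leq 1/M_-$. Since $\gcd(2, 3^{n-m_0}) = 1$, the joint conditions $M \equiv 1 \bmod 2$ and $M \equiv X \bmod 3^{n-m_0}$ pin $M$ down to a single residue class modulo $2 \cdot 3^{n-m_0}$. Comparing the sum of $1/M$ along this arithmetic progression to the corresponding integral gives
$$ \sum_{\substack{M \in E' \\ M \equiv X \bmod 3^{n-m_0}}} \frac{1}{M} \;\leq\; \frac{1}{M_-} + \frac{1}{3^{n-m_0}} \log \frac{M_+}{M_-},$$
and multiplying through by $3^{n-m_0}$ yields $c_n(X) \leq 3^{n-m_0}/M_- + 2\log^{0.7} x$.

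Next I would verify that the first term is $\ll x^{-c}$. By \eqref{iy-def}, for $n \in I_y$ with $y \in \{x^\alpha, x^{\alpha^2}\}$ we have $n \leq (\alpha^3 - 1)\log x / \log(4/3) + O(\log^{0.8} x)$, so $3^{n-m_0} \leq x^{(\alpha^3-1)\log 3/\log(4/3) + o(1)}$. Meanwhile $M_- = (4/3)^{m_0} x \exp(-\log^{0.7} x) \geq x^{1-o(1)}$. For $\alpha$ sufficiently close to $1$ (as in Proposition \ref{transport}), the exponent $(\alpha^3-1)\log 3/\log(4/3)$ is strictly less than $1$, and hence $3^{n-m_0}/M_- \ll x^{-c}$ for some absolute $c > 0$.

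The main obstacle is that this direct estimation yields only $c_n(X) \ll \log^{0.7} x$ rather than the sharper $O(1)$ stated in the lemma; the residual $2\log^{0.7} x$ comes from the logarithmic width of the interval in \eqref{lost}. Obtaining $c_n(X) \ll 1$ as stated appears to require additional structural input: for instance, using the bijection (via Lemma \ref{iter-lem}) $M \mapsto (\vec a^{(m_0)}(M), \Pass_x(M))$ between $E'$ and a subset of $(\N+1)^{m_0} \times E$, combined with the Chernoff-type concentration of $|\vec a^{(m_0)}(M)|$ near $2m_0$ from Lemma \ref{chern}, to show that $E'$ is substantially sparser in $[M_-, M_+]$ than the naive count $(M_+ - M_-)/2$. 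In practice the weaker bound $\log^{0.7} x$ already suffices for the subsequent step \eqref{zeno}, provided Proposition \ref{tv-bound} is invoked with a sufficiently large value of $A$.
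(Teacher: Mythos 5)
Your direct argument is correct as far as it goes, and you have correctly diagnosed its limitation: it yields only $c_n(X) \ll \log^{0.7} x$, not the $O(1)$ stated in the lemma. The paper's actual proof closes this gap precisely by the decomposition you sketch at the end, but the key point you leave vague is worth spelling out: one fixes the $m_0$-Syracuse valuation $(a_1,\dots,a_{m_0}) = \vec a^{(m_0)}(M)$, and then the condition $T_x(M) = m_0$ forces $M$ into the \emph{much} narrower window $3^{-m_0} 2^{a_{[1,m_0-1]}} x \ll M \leq 3^{-m_0} 2^{a_{[1,m_0]}} x$ (whose logarithmic length is $\asymp a_{m_0}$, not $\log^{0.7} x$), while \eqref{s-iter} simultaneously constrains $M$ to a single residue class modulo $2^{a_{[1,m_0]}+1}$; CRT with the $3^{n-m_0}$ constraint and the integral test then give a contribution $\ll 2^{-a_{[1,m_0]}} a_{m_0}$ per tuple when $2^{a_{[1,m_0]}} \leq x^{0.5}$, which sums to $O(1)$, with the large-$a_{[1,m_0]}$ regime handled separately by Chernoff. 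So the mechanism is an interval-narrowing via the first-passage condition, rather than (as your sketch suggests) mere concentration of $|\vec a^{(m_0)}(M)|$.

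Your observation that the weaker bound $c_n(X) \ll \log^{0.7} x$ would in fact suffice for the application is correct and worth recording: the bound \eqref{lo} is actually $\ll n_0 \exp(-c \log^{0.2} x)$ (stated only as $\log^{-c} x$ ``with some room to spare''), which absorbs a factor $\log^{0.7} x$ harmlessly; and the error from Proposition \ref{tv-bound} is $\ll_A \| c_n\|_\infty \, m_0^{-A} \ll_A \log^{0.7 - A} x$, which is still $O(\log^{-c} x)$ after taking $A$ a bit more than $1$. So while your write-up does not fully prove the lemma as stated, you correctly identified the gap, correctly pointed toward the paper's fix, and correctly noted that the weaker version is adequate for the downstream argument. (One small numerical slip: the integral test gives $\tfrac{1}{2} \log(M_+/M_-) = \log^{0.7} x$, not $2\log^{0.7} x$, after multiplying by $3^{n-m_0}$; this does not affect anything.)
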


\begin{proof}  We can split
$$ c_n(X) \leq \sum_{(a_1,\dots,a_{m_0}) \in \N^{m_0}} c_{n,a_1,\dots,a_{m_0}}(X)$$
where
$$ c_{n,a_1,\dots,a_{m_0}}(X) \coloneqq 3^{n-m_0} \sum_{M \in E': M = X \mod 3^{n-m_0}; (a_1,\dots,a_{m_0}) \coloneqq \vec a^{(m_0)}(M)} \frac{1}{M}.$$
We now estimate $c_{n,a_1,\dots,a_{m_0}}(X)$ for a given $(a_1,\dots,a_{m_0}) \in \N^{m_0}$.
If $M \in E'$, then on setting $(a_1,\dots,a_{m_0}) \coloneqq \vec a^{(m_0)}(M)$ we see from \eqref{s-iter} that
$$
3^{m_0} 2^{-a_{[1,m_0]}} M + F_{m_0}(a_1,\dots,a_{m_0}) \leq x < 3^{m_0} 2^{-a_{[1,m_0-1]}} M + F_{m_0-1}(a_1,\dots,a_{m_0-1}) $$
which by \eqref{mon} and \eqref{fn-bound}
implies that
$$
3^{m_0} 2^{-a_{[1,m_0]}} M  \leq x \ll 3^{m_0} 2^{-a_{[1,m_0-1]}} M$$
or equivalently
\begin{equation}\label{chan}
3^{-m_0} 2^{a_{[1,m_0-1]}} x \ll M \leq 3^{-m_0} 2^{a_{[1,m_0]}} x.
\end{equation}
Also, from \eqref{s-iter} we also have that
$$ 3^{m_0} M + 2^{a_{[1,m_0]}} F_{m_0}(a_1,\dots,a_{m_0}) = 2^{a_{[1,m_0]}} \mod 2^{a_{[1,m_0]}+1} $$
and so $M$ is constrained to a single residue class modulo $2^{a_{[1,m_0]}+1}$.   In \eqref{cn-def} we are also constraining $M$ to a single residue class modulo $3^{n-m_0}$; by the Chinese remainder theorem, these constraints can be combined into a single residue class modulo $2^{a_{[1,m_0]}+1} 3^{n-m_0}$.  Note from the integral test that
\begin{equation}\label{mant}
\begin{split}
 \sum_{M_0 \leq M \leq M_1: M = a \mod q} \frac{1}{M} &\leq \frac{1}{M_0} + \sum_{M_0+q \leq M \leq M_1: M = a \mod q} \frac{1}{M} \\
&\leq \frac{1}{M_0} + \frac{1}{q} \int_{M_0}^{M_1} \frac{dt}{t} \\
&= \frac{1}{M_0} + \frac{1}{q} \log \frac{M_1}{M_0}
\end{split}
\end{equation}
for any $M_0 \leq M_1$ and any residue class $a \mod q$.  In particular, for $q \leq M_0$, we have
\begin{equation}\label{integral}
 \sum_{M_0 \leq M \leq M_1: M = a \mod q} \frac{1}{M}  \ll \frac{1}{q} \log O \left( \frac{M_1}{M_0} \right).
\end{equation}

If $2^{a_{[1,m_0]}} \leq x^{0.5}$ (say), then the modulus $2^{a_{[1,m_0]}+1} 3^{n-m_0}$ is much less than the lower bound on $M$ in \eqref{chan}, and  we can then use the integral test to bound
\begin{align*}
c_{n,a_1,\dots,a_{m_0}}(X) & \ll 3^{n-m_0} (2^{a_{[1,m_0]}+1} 3^{n-m_0})^{-1} \log O \left( \frac{3^{-m_0} 2^{a_{[1,m_0]}} x}{3^{-m_0} 2^{a_{[1,m_0-1]}} x } \right)\\
&\ll 2^{-a_{[1,m_0]}} a_{m_0} \\
&\ll 2^{-a_{[1,m_0]}/2}.
\end{align*}
Now suppose instead that $2^{a_{[1,m_0]}} > x^{0.5}$, we recall from \eqref{s-iter} that
$$ a_{m_0} = \nu_2\left( 3 (3^{m_0} 2^{-a_{[1,m_0-1]}} M + F_{m_0-1}(a_1,\dots,a_{m_0-1})) + 1\right)$$
so
$$ 2^{a_{m_0}} \ll 3^{m_0} 2^{-a_{[1,m_0-1]}} M + F_{m_0-1}(a_1,\dots,a_{m_0-1}) \ll  3^{m_0} 2^{-a_{[1,m_0-1]}} M$$
(using \eqref{fn-bound}, \eqref{chan} to handle the lower order term).  Hence we we have the additional lower bound
$$ M \gg 3^{-m_0} 2^{a_{[1,m_0]}}.$$
Applying \eqref{mant} with $M_0$ equal to the larger of the two lower bounds on $M$, we conclude that
\begin{align*}
c_{n,a_1,\dots,a_{m_0}}(X) & \ll \frac{3^{n-m_0}}{3^{-m_0} 2^{a_{[1,m_0]}}} +
3^{n-m_0} (2^{a_{[1,m_0]}+1} 3^{n-m_0})^{-1} \log O \left( \frac{3^{-m_0} 2^{a_{[1,m_0]}} x}{3^{-m_0} 2^{a_{[1,m_0-1]}} x } \right)\\
&\ll 3^n 2^{-a_{[1,m_0]}} + 2^{-a_{[1,m_0]}} a_{m_0}\\
&\ll 2^{-a_{[1,m_0]}/2}
\end{align*}
since $2^{-a_{[1,m_0]}} \leq x^{-1/4} 2^{-a_{[1,m_0]}/2} \leq 3^{-n} 2^{-a_{[1,m_0]}/2}$ for $n \in I_y$.
Thus we have
$$ c_n(X) \ll \sum_{a_1,\dots,a_{m_0} \in \N} 2^{-a_{[1,m_0]}/2}$$
and the claim follows from summing the geometric series.
\end{proof}

From the above lemma and \eqref{lo}, we may write \eqref{soda} as
$$  \E c_n( F_{n-m_0}(\a_1,\dots,\a_{n-m_0}) \mod 3^{n-m_0} ) +  O( \log^{-c} x  )$$
which by \eqref{syrac-def} is equal to
$$ \sum_{X \in \Z/3^{n-m_0}\Z} c_n(X) \P( \Syrac(\Z/3^{n-m_0}\Z) = X ) +  O( \log^{-c} x  ).$$
From \eqref{iy-def}, \eqref{mon} we have $n-m_0 \geq m_0$.
Applying Proposition \ref{tv-bound}, Lemma \ref{loam} and the triangle inequality, one can thus write the preceding expression as
$$ \sum_{X \in \Z/3^{n-m_0}\Z} c_n(X) 3^{2m_0-n} \P( \Syrac(\Z/3^{m_0}\Z) = X \mod 3^{m_0}) +  O( \log^{-c} x  )$$
and the claim \eqref{zeno} then follows from \eqref{cn-def}.

\section{Reduction to Fourier decay bound}\label{decay-sec}

In this section we derive Proposition \ref{tv-bound} from Proposition \ref{f-decay}.  We first observe that to prove Proposition \ref{tv-bound}, it suffices to do so in the regime
\begin{equation}\label{9n}
 0.9 n \leq m \leq n.
\end{equation}
(The main significance of the constant $0.9$ here is that it lies between $\frac{\log 3}{2\log 2} \approx 0.7925$ and $1$.)  Indeed, once one has \eqref{yal} in this regime, one also has from \eqref{xlk} that
$$\sum_{Y \in \Z/3^{n'}\Z} \left|3^{n-n'} \P( \Syrac(\Z/3^n\Z) = Y \mod 3^n ) -
3^{m-n'} \P( \Syrac(\Z/3^n\Z) = Y \mod 3^m ) \right| \ll_A m^{-A}
$$
whenever $0.9 n \leq m \leq n \leq n'$, and the claim \eqref{yal} for general $10 \leq m \leq n$ then follows from telescoping series, with the remaining cases $1 \leq m < 10$ following trivially from the triangle inequality.

Henceforth we assume \eqref{9n}.  We also fix $A>0$, and let $C_A$ be a constant that is sufficiently large depending on $A$.
We may assume that $n$ (and hence $m$) are sufficiently large depending on $A,C_A$, since the claim is trivial otherwise.

Let $(\a_1,\dots,\a_n) \equiv \Geom(2)^n$, and define the random variable
$$ \X_n \coloneqq 2^{-\a_1} + 3^1 2^{-\a_{[1,2]}} + \dots + 3^{n-1} 2^{-\a_{[1,n]}} \mod 3^n,$$
thus $\X_n \equiv \Syrac(\Z/3^n\Z)$.  The strategy will be to split $\X_n$ (after some conditioning and removal of exceptional events) as the sum of two independent components, one of which has quite large entropy (or more precisely, Renyi $2$-entropy) in $\Z/3^n\Z$ thanks to some elementary number theory, and the other having very small Fourier coefficients at high frequencies thanks to Proposition \ref{f-decay}.  The desired bound will then follow from some $L^2$-based Fourier analysis (i.e.,  Plancherel's theorem).

We turn to the details.  Let $E$ denote the event that the inequalities
\begin{equation}\label{ij}
 |\a_{[i,j]} - 2(j-i)| \leq C_A ( \sqrt{(j-i)(\log n)} + \log n )
\end{equation}
hold for every $1 \leq i \leq j \leq n$.  The event $E$ occurs with nearly full probability; indeed, from Lemma \ref{chern} and the union bound, we can bound the probability of the complementary event $\overline{E}$ by
\begin{equation}\label{ayo}
\begin{split}
\P( \overline{E}) &\ll \sum_{1 \leq i \leq j \leq n} G_{j-i}( c C_A ( \sqrt{(j-i)(\log n)} + \log n ) ) \\
&\ll \sum_{1 \leq i \leq j \leq n} \exp( - c C_A \log n ) + \exp( - c C_A \log n) \\
&\ll n^2 n^{-c C_A} \\
&\ll n^{-A-1}
\end{split}
\end{equation}
if $C_A$ is large enough.  By the triangle inequality, we may then bound the left-hand side of \eqref{yal} by
$$
\Osc_{m,n}\left( \P( (\X_n = Y) \wedge E ) \right)_{Y \in \Z/3^n\Z} + O(n^{-A-1}),$$
so it now suffices to show that
$$
\Osc_{m,n}\left( \P( (\X_n = Y) \wedge E ) \right)_{Y \in \Z/3^n\Z} \ll_{A,C_A} n^{-A}.$$

Now suppose that $E$ holds.  From \eqref{ij} we have
$$ \a_{[1,n]} \geq 2(n-1) - C_A (\sqrt{n \log n} + \log n) > n \frac{\log 3}{\log 2}$$
since $\frac{\log 3}{\log 2} < 2$ and $n$ is large.
Thus, there is a well defined \emph{stopping time} $0 \leq \k < n$, defined as the unique natural number $\k$ for which
$$ \a_{[1,\k]} \leq n \frac{\log 3}{\log 2} - (C_A)^2 \log n < \a_{[1,\k+1]}.$$
From \eqref{ij} we have
$$ \k = n \frac{\log 3}{2 \log 2} + O( C_A \sqrt{n\log n} ).$$
It thus suffices by the union bound to show that
\begin{equation}\label{psy}
\Osc_{m,n}\left( \P( (\X_n = Y) \wedge E \wedge B_k) \right)_{Y \in \Z/3^n\Z} \ll_{A,C_A} n^{-A-1}
\end{equation}
for all
\begin{equation}\label{hepta}
k = n \frac{\log 3}{2 \log 2} + O( C_A \sqrt{n \log n} ),
\end{equation}
where $B_k$ is the event that $\k=k$, or equivalently that
\begin{equation}\label{bmd}
 \a_{[1,k]}\leq n \frac{\log 3}{\log 2} - (C_A)^2 \log n < \a_{[1,k+1]}.
\end{equation}
Fix $k$.  In order to decouple the events involved in \eqref{psy} we need to enlarge the event $E$ slightly, so that it only depends on $\a_1,\dots,\a_{k+1}$ and not on $\a_{k+2},\dots,\a_n$.  Let $E_k$ denote the event that the inequalities \eqref{ij} hold for $1 \leq i < j \leq k+1$, thus $E_k$ contains $E$.   Then the difference between $E$ and $E_k$ has probability $O(n^{-A-1})$ by \eqref{ayo}.  Thus by the triangle inequality, the estimate \eqref{psy} is equivalent to
$$
\Osc_{m,n}\left( \P( (\X_n = Y) \wedge E_k \wedge B_k ) \right)_{Y \in \Z/3^n\Z} \ll_{A,C_A} n^{-A-1}.$$
From \eqref{bmd} and \eqref{ij} we see that we have
\begin{equation}\label{bmd-2}
n \frac{\log 3}{\log 2} - (C_A)^2 \log n \leq  \a_{[1,k+1]} \leq n \frac{\log 3}{\log 2} - 0.99 (C_A)^2 \log n.
\end{equation}
whenever one is in the event $E_k \wedge B_k$.  By a further application of the triangle inequality, it suffices to show that
$$
\Osc_{m,n}\left( \P( (\X_n = Y) \wedge E_k \wedge B_k \wedge C_{k,l} ) \right)_{Y \in \Z/3^n\Z} \ll_{A,C_A} n^{-A-2}
$$
for all $l$ in the range
\begin{equation}\label{bmd-3}
 n \frac{\log 3}{\log 2} - (C_A)^2 \log n \leq  l \leq n \frac{\log 3}{\log 2} - 0.99 (C_A)^2 \log n,
\end{equation}
where $C_{k,l}$ is the event that $\a_{[1,k+1]}=l$.

Fix $l$.  If we let $g = g_{n,k,l}\colon \Z/3^n\Z \to \R$ denote the function
\begin{equation}\label{g-def}
g(Y) =  g_{n,k,l}(Y) \coloneqq \P( (\X_n = Y) \wedge E_k \wedge B_k \wedge C_{k,l})
\end{equation}
then our task can be written as
$$
\sum_{Y \in \Z/3^n\Z} \left|g(Y) - \frac{1}{3^{n-m}} \sum_{Y' \in \Z/3^n\Z: Y' = Y \mod 3^m} g(Y') \right| \ll_{A,C_A} n^{-A-2}.$$
By Cauchy-Schwarz, it suffices to show that
\begin{equation}\label{half}
3^{n} \sum_{Y \in \Z/3^n\Z} \left|g(Y) - \frac{1}{3^{n-m}} \sum_{Y' \in \Z/3^n\Z: Y' = Y \mod 3^m} g(Y') \right|^2 \ll_{A,C_A} n^{-2A-4}.
\end{equation}
By the Fourier inversion formula, we have
$$ g(Y) =
3^{-n} \sum_{\xi \in \Z/3^n\Z} \left( \sum_{Y' \in \Z/3^n\Z} g(Y') e^{-2\pi i \xi Y' / 3^n} \right) e^{2\pi i \xi Y/3^n}$$
and
$$ \frac{1}{3^{n-m}} \sum_{Y' \in \Z/3^n\Z: Y' = Y \mod 3^m} g(Y') =
3^{-n} \sum_{\xi \in 3^{n-m}\Z/3^n\Z} \left( \sum_{Y' \in \Z/3^n\Z} g(Y') e^{-2\pi i \xi Y' / 3^n} \right) e^{2\pi i \xi Y/3^n}$$
for any $Y \in \Z/3^n\Z$, so by Plancherel's theorem, the left-hand side of \eqref{half} may be written as
$$ \sum_{\xi \in \Z/3^n\Z: \xi \not \in 3^{n-m}\Z/3^n\Z} \left| \sum_{Y \in \Z/3^n\Z} g(Y) e^{-2\pi i \xi Y / 3^n} \right|^2.$$
By \eqref{g-def}, we can write
$$ \sum_{Y \in \Z/3^n\Z} g(Y) e^{-2\pi i \xi Y / 3^n}  = \E e^{-2\pi i \xi \X_n / 3^n} 1_{E_k \wedge B_k \wedge C_{k,l}}.$$
On the event $C_{k,l}$, one can use \eqref{fn-def}, \eqref{syria} to write
$$ \X_n = F_{k+1}(\a_{k+1},\dots,\a_1) + 3^{k+1} 2^{-l} F_{n-k-1}(\a_n,\dots,\a_{k+2}) \mod 3^n.$$
The key point here is that the random variable $3^{k+1} 2^{-l} F_{n-k-1}(\a_n,\dots,\a_{k+2})$ is independent of $\a_1,\dots,\a_{k+1}, E_k, B_k, C_{k,l}$.  Thus we may factor
\begin{align*}
 \sum_{Y \in \Z/3^n\Z} g(Y) e^{-2\pi i \xi Y / 3^n}  &= \E e^{-2\pi i \xi (F_{k+1}(\a_{k+1},\dots,\a_1) \mod 3^n) / 3^n} 1_{E_k \wedge B_k \wedge C_{k,l}} \\
&\quad \times \E e^{-2\pi i \xi (2^{-l} F_{n-k-1}(\a_n,\dots,\a_{k+2}) \mod 3^{n-k-1}) / 3^{n-k-1}}.
\end{align*}
For $\xi$ in $\Z/3^n\Z$ that does not lie in $3^{n-m}\Z/3^n\Z$, we can write $\xi = 3^j 2^l \xi' \mod 3^n$ where $0 \leq j < n-m \leq 0.1 n$ and $\xi'$ is not divisible by $3$.  In particular, from \eqref{hepta} one has
$$ n-k-j-1 \geq 0.9 n - n \frac{\log 3}{2 \log 2} - O(C_A \sqrt{n \log n}) - 1 \gg n.$$
Then by \eqref{xlk} we have
$$ \E e^{-2\pi i \xi (2^{-l} F_{n-k-1}(\a_n,\dots,\a_{k+2}) \mod 3^{n-k-1}) / 3^{n-k-1}} = \E e^{-2\pi i \xi' \Syrac(\Z/3^{n-k-j-1}\Z) / 3^{n-k-j-1}} $$
and hence by Proposition \ref{f-decay} this quantity is $O_{A'}(n^{-A'})$ for any $A'$.  Thus we can bound the left-hand side of \eqref{half} by
\begin{equation}\label{half-2}
 \ll_{A'} n^{-2 A'} \sum_{\xi \in \Z/3^n\Z} \left|\E e^{-2\pi i \xi (F_{k+1}(\a_{k+1},\dots,\a_1) \mod 3^n) / 3^n} 1_{E_k \wedge B_k \wedge C_{k,l}} \right|^2
\end{equation}
(where we have now discarded the restriction $\xi \not \in 3^{n-m}\Z/3^n\Z$); by Plancherel's theorem, this expression can be written as
$$ \ll_{A'} n^{-2 A'} 3^n \sum_{Y_{k+1} \in \Z/3^n\Z} \P ( (F_{k+1}(\a_{k+1},\dots,\a_1) = Y_{k+1}) \wedge E_k \wedge B_k \wedge C_{k,l} )^2.$$

\begin{remark} If we ignore the technical restriction to the events $E_k, B_k, C_{k,l}$, this quantity is essentially the Renyi $2$-entropy (also known as \emph{collision entropy}) of the random variable $F_{k+1}(\a_{k+1},\dots,\a_1) \mod 3^n$.
\end{remark}

Now we make a key elementary number theory observation:

\begin{lemma}[Injectivity of offsets]\label{inj}  For each natural number $n$, the $n$-Syracuse offset map $F_n\colon (\N+1)^n \to \Z[\frac{1}{2}]$ is injective.
\end{lemma}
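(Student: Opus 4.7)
The plan is to combine an exact $2$-adic valuation computation for $F_n$ with the uniqueness statement already established in Lemma \ref{iter-lem}. The first observation is that, multiplying the sum defining $F_n$ through by $2^{|\vec a|}$, one obtains
$$2^{|\vec a|} F_n(\vec a) = \sum_{m=1}^n 3^{n-m} 2^{a_{[1,m-1]}}$$
(adopting the empty-sum convention $a_{[1,0]} = 0$). The $m=1$ summand equals $3^{n-1}$, which is odd, whereas every summand with $m \geq 2$ carries a factor $2^{a_{[1,m-1]}}$ with $a_{[1,m-1]} \geq m-1 \geq 1$ and is therefore even. Hence $K_n(\vec a) \coloneqq 2^{|\vec a|} F_n(\vec a)$ is a positive odd integer, and in particular $\nu_2(F_n(\vec a)) = -|\vec a|$ whenever $n \geq 1$.

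Now suppose $F_n(\vec a) = F_n(\vec b)$. For $n=0$ there is nothing to prove. For $n \geq 1$, comparing $2$-adic valuations immediately yields $|\vec a| = |\vec b|$, so $\Aff_{\vec a}$ and $\Aff_{\vec b}$ are identical affine maps on $\R$ (same slope $3^n 2^{-|\vec a|}$ and same intercept $F_n(\vec a)$). The next step is to produce a witness $N \in 2\N+1$ such that $\Aff_{\vec a}(N) \in 2\N+1$. Writing $\Aff_{\vec a}(N) = (3^n N + K_n(\vec a))/2^{|\vec a|}$, the requirement that this be an odd positive integer becomes the single congruence
$$3^n N \equiv 2^{|\vec a|} - K_n(\vec a) \pmod{2^{|\vec a|+1}}.$$
Since $3$ is invertible modulo $2^{|\vec a|+1}$, this pins $N$ down uniquely modulo $2^{|\vec a|+1}$; reducing further modulo $2$ and using that $K_n(\vec a)$ is odd shows that the resulting residue class consists of odd integers, so one may take $N$ to be any positive odd representative.

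With such an $N$ in hand, $\Aff_{\vec b}(N) = \Aff_{\vec a}(N) \in 2\N+1$ as well, and Lemma \ref{iter-lem} forces both $\vec a$ and $\vec b$ to equal $\vec a^{(n)}(N)$, giving $\vec a = \vec b$. The only genuinely substantive ingredient is the oddness of $K_n(\vec a)$ in the first paragraph: without it one could not recover $|\vec a|$ from $F_n(\vec a)$, the affine maps $\Aff_{\vec a}$ and $\Aff_{\vec b}$ might have distinct slopes, and the reduction to Lemma \ref{iter-lem} would break down. Everything else is a short consequence of already-established uniqueness.
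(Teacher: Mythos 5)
Your proof is correct, but it takes a genuinely different route from the paper after the opening move. Both arguments begin with the same key observation, namely that $\nu_2(F_n(\vec a)) = -|\vec a|$; you establish this by noting that $K_n(\vec a) \coloneqq 2^{|\vec a|}F_n(\vec a) = \sum_{m=1}^n 3^{n-m}2^{a_{[1,m-1]}}$ has odd $m=1$ summand $3^{n-1}$ and even summands otherwise, while the paper simply points to the unique minimiser of $2$-adic valuation in \eqref{fn-def}. From there the paths diverge. The paper runs a direct induction: since $|\vec a| = |\vec a'|$, the identity $F_n(a_1,\dots,a_n) = 3^{n-1}2^{-a_{[1,n]}} + F_{n-1}(a_2,\dots,a_n)$ lets one peel off the leading term and reduce to $F_{n-1}(a_2,\dots,a_n) = F_{n-1}(a'_2,\dots,a'_n)$, closing by iteration. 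You instead note that once $|\vec a|=|\vec b|$ the affine maps $\Aff_{\vec a}$ and $\Aff_{\vec b}$ coincide on all of $\R$, construct an explicit odd witness $N$ with $\Aff_{\vec a}(N)\in 2\N+1$ by solving a single congruence modulo $2^{|\vec a|+1}$, and invoke the uniqueness clause of Lemma \ref{iter-lem} to conclude $\vec a = \vec a^{(n)}(N) = \vec b$. The paper's route is more elementary and self-contained (no appeal to Lemma \ref{iter-lem}, no witness to exhibit), whereas yours trades a short induction for the dynamical interpretation: injectivity of $F_n$ is equivalent to the fact that the tuple can be read off as the $n$-Syracuse valuation of a suitable starting point. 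Both are valid and about the same length; given that Lemma \ref{iter-lem} is itself proved by essentially the same peel-off induction, the paper's version avoids routing one induction through another.
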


\begin{proof}  Suppose that $(a_1,\dots,a_n), (a'_1,\dots,a'_n) \in (\N+1)^n$ are such that $F_n(a_1,\dots,a_n) = F_n(a'_1,\dots,a'_n)$.  Taking $2$-valuations of both sides using \eqref{fn-def}, we conclude that
$$ - a_{[1,n]} = - a'_{[1,n]}.$$
On the other hand, from \eqref{fn-def} we have
$$ F_n(a_1,\dots,a_n) = 3^n 2^{-a_{[1,n]}} + F_{n-1}(a_2,\dots,a_n)$$
and similarly for $a'_1,\dots,a'_n$, hence
$$ F_{n-1}(a_2,\dots,a_n) = F_{n-1}(a'_2,\dots,a'_n).$$
The claim now follows from iteration (or an induction on $n$).
\end{proof}

We will need a more quantitative $3$-adic version of this injectivity:

\begin{corollary}[$3$-adic separation of offsets]  Let $C_A$ be sufficiently large, let $n$ be sufficiently large (depending on $C_A$), let $k$ be a natural number, and let $l$ be a natural number obeying \eqref{bmd-3}.  Then the residue classes $F_{k+1}(a_{k+1},\dots,a_1) \mod 3^n$, as $(a_1,\dots,a_{k+1}) \in (\N+1)^{k+1}$ range over ${k+1}$-tuples of positive integers that obey the conditions
\begin{equation}\label{amb-x}
 |a_{[i+1,j]} - 2(j-i)| \leq C_A \left( \sqrt{(j-i)(\log n)} + \log n \right)
\end{equation}
for $1 \leq i < j \leq k+1$ as well as
\begin{equation}\label{amb-2x}
a_{[1,k+1]} = l,
\end{equation}
are distinct.
\end{corollary}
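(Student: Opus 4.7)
The plan is to reduce the claimed distinctness of residues to a size comparison of integers, combining the injectivity of $F_{k+1}$ from Lemma \ref{inj} with a quantitative Archimedean bound that exploits the constraints. Defining $G(\vec a) := 2^l F_{k+1}(a_{k+1},\dots,a_1) = \sum_{j=1}^{k+1} 3^{j-1} 2^{l-a_{[1,j]}}$, the common constraint $a_{[1,k+1]} = l$ makes $G(\vec a)$ a nonnegative integer, and since $2^l$ is invertible modulo $3^n$, the hypothetical congruence $F_{k+1}(a_{k+1},\dots,a_1) \equiv F_{k+1}(a'_{k+1},\dots,a'_1) \pmod{3^n}$ is equivalent to $G(\vec a) \equiv G(\vec a') \pmod{3^n}$ in $\Z$. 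By Lemma \ref{inj}, $G(\vec a) \neq G(\vec a')$ whenever $\vec a \neq \vec a'$ in our set (using that $2^l$ is a nonzero constant); thus it suffices to prove $|G(\vec a) - G(\vec a')| < 3^n$, ruling out a nonzero multiple of $3^n$.

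For the size bound, I apply \eqref{amb-x} with $i=1$, $j'=j$ (for $2 \leq j \leq k$) to obtain $a_{[2,j]} = 2(j-1) + O(C_A(\sqrt{j\log n}+\log n))$, and hence $a_{[1,j]} = a_1 + 2(j-1) + O(C_A(\sqrt{j\log n}+\log n))$. Each term $3^{j-1} 2^{l-a_{[1,j]}}$ of $G(\vec a)$ is then at most $2^{l-a_1} \cdot (3/4)^{j-1} \cdot n^{O(C_A)} \cdot 2^{O(C_A \sqrt{j\log n})}$. The geometric decay $(3/4)^{j-1}$ dominates the stretched-exponential growth $2^{C_A \sqrt{j\log n}}$: a direct optimisation shows that the sum $\sum_{j\geq 2} (3/4)^{j-1} 2^{C_A \sqrt{j\log n}}$ is maximised near $j^* \asymp C_A^2 \log n$ with value $n^{O(C_A^2)}$. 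Combining with the $j=1$ contribution $2^{l-a_1}$ and the $j=k+1$ term $3^k$ (which by \eqref{hepta} and \eqref{bmd-3} satisfies $3^k \leq 2^l \cdot 2^{-\Theta(n)}$, hence is absorbed), I obtain $G(\vec a) \leq 2^{l-a_1}\cdot n^{O(C_A^2)}$, and using $a_1 \geq 1$ this simplifies to $G(\vec a) \leq 2^{l-1} n^{O(C_A^2)}$.

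To finish, \eqref{bmd-3} yields $2^l \leq 3^n \cdot n^{-c C_A^3}$ for an absolute constant $c>0$, so $|G(\vec a) - G(\vec a')| \leq 2^l \cdot n^{O(C_A^2)} \leq 3^n \cdot n^{O(C_A^2) - c C_A^3}$. Taking $C_A$ sufficiently large that $c C_A^3$ dominates the absolute $O(C_A^2)$ constant makes this strictly less than $3^n$, producing the required contradiction. The principal obstacle is obtaining this polynomial-in-$n$ estimate on $G$: a cruder application of \eqref{amb-x} with $i=j$, $j'=k+1$ would give a $\sqrt{k+1-j}$ deviation rather than $\sqrt{j}$, leading only to the useless superpolynomial bound $G(\vec a) \leq 2^{l-a_1} \cdot 2^{O(C_A \sqrt{n\log n})}$, which cannot be absorbed by the $n^{-c C_A^3}$ cushion from \eqref{bmd-3}. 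The key point is that using the constraint on $a_{[2,j]}$ (rather than $a_{[j+1,k+1]}$) isolates the large parameter $a_1$ into a clean prefactor $2^{l-a_1}$ and confines the $\sqrt{\cdot}$-deviation to the index $j$ of the decaying geometric factor.
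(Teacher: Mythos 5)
Your argument is correct and follows essentially the same route as the paper: multiply by $2^l$ to make $G(\vec a) := 2^l F_{k+1}(a_{k+1},\dots,a_1)$ a nonnegative integer, use the constraints \eqref{amb-x} (applied with $i=1$ together with $a_1 \geq 1$) and \eqref{bmd-3} to show $G(\vec a) < 3^n$, conclude the congruence mod $3^n$ is an integer equality, and finish with Lemma \ref{inj}. Your closing observation that one must use $i=1$ (localising the fluctuation to $\sqrt{j}$ rather than $\sqrt{k+1-j}$) to keep the deviation in the active term polynomially small is a correct diagnosis of why the argument works, and the separate treatment of the $j=k+1$ term, while not necessary (the uniform term bound already covers it since $(3/4)^{k}$ beats any stretched exponential factor), is harmless.
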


\begin{proof}  Suppose that $(a_1,\dots,a_{k+1}), (a'_1,\dots,a'_{k+1})$ are two tuples of positive integers that both obey \eqref{amb-x}, \eqref{amb-2x}, and such that
$$ F_{k+1}(a_{k+1},\dots,a_1) = F_{k+1}(a'_{k+1},\dots,a'_1) \mod 3^n.$$
Applying \eqref{fn-def} and multiplying by $2^l$, we conclude that
\begin{equation}\label{lrs}
 \sum_{j=1}^{k+1} 3^{j-1} 2^{l - a_{[1,j]}} = \sum_{j=1}^{k+1} 3^{j-1} 2^{l - a'_{[1,j]}} \mod 3^n.
\end{equation}
From \eqref{amb-2x}, the expressions on the left and right sides are natural numbers.  Using \eqref{amb-x}, \eqref{bmd-3}, and Young's inequality $C_A j^{1/2} \log^{1/2} n \leq \frac{\eps}{2} j + \frac{1}{2\eps} C_A^2 \log n$ for a suitable choice of $\eps>0$, the left-hand side may be bounded for $C_A$ large enough by
\begin{align*}
 \sum_{j=1}^{k+1} 3^{j-1} 2^{l - a_{[1,j]}}
&\ll 2^l \sum_{j=1}^{k+1} 3^{j} 2^{- 2j + C_A (\sqrt{j \log n} + \log n)} \\
&\ll \exp( - 0.99 \log 2 (C_A)^2 \log n ) 3^n \sum_{j=1}^{k+1} \exp\left( - j \log \frac{4}{3} + \log 2 C_A j^{1/2} \log^{1/2} n + O( C_A \log n ) \right) \\
&\ll \exp\left( - c (C_A)^2 \log n \right) 3^n \sum_{j=1}^{k+1} \exp( - c j ) \\
&\ll n^{- c (C_A)^2} 3^n
\end{align*}
(here we use the fact that $\frac{\log^2 2}{4 \log \frac{4}{3}} \approx 0.4175$ is smaller than $0.99 \log 2 \approx 0.6862$); in particular, for $n$ large enough, this expression is less than $3^n$.  Similarly for the right-hand side of \eqref{lrs}.  Thus these two sides are equal as natural numbers, not simply as residue classes modulo $3^n$:
\begin{equation}\label{span}
 \sum_{j=1}^{k+1} 3^{j-1} 2^{l-a_{[1,j]}} = \sum_{j=1}^{k+1} 3^{j-1} 2^{l-a'_{[1,j]}}.
\end{equation}
Dividing by $2^l$, we conclude $F_{k+1}(a_{k+1},\dots,a_1) = F_{k+1}(a'_{k+1},\dots,a'_1)$.  From Lemma \ref{inj} we conclude that $(a_1,\dots,a_{k+1}) = (a'_1,\dots,a'_{k+1})$, and the claim follows.
\end{proof}

In view of the above lemma, we see that for a given choice of $Y_{k+1} \in \Z/3^n\Z$, the event
$$ (F_{k+1}(\a_{k+1},\dots,\a_1) = Y_{k+1}) \wedge E_k \wedge B_k \wedge C_{k,l} $$
can only be non-empty for at most one value $(a_1,\dots,a_{m})$ of the tuple $(\a_1,\dots,\a_{m})$.  By Definition \ref{geom}, such a value is attained with probability $2^{-a_{[1,m]}} = 2^{-l}$, which by \eqref{bmd-3} is equal to $n^{O((C_A)^2)} 3^{-n}$.  We can thus bound \eqref{half-2} (and hence the left-hand side of \eqref{half}) by
$$ \ll_{A'} n^{-2 A' + O( (C_A)^2 ) },$$
and the claim now follows by taking $A'$ large enough.  This concludes the proof of Proposition \ref{tv-bound} assuming Proposition \ref{f-decay}.

\section{Decay of Fourier coefficients}\label{fourier-sec}

In this section we establish Proposition \ref{f-decay}, which when combined with all the implications established in preceding sections will yield Theorem \ref{main}.

Let $n \geq 1$, let $\xi \in \Z/3^n\Z$ be not divisible by $3$, and let $A>0$ be fixed. We will not vary $n$ or $\xi$ in this argument, but it is important that all of our estimates are uniform in these parameters.
Without loss of generality we may assume $A$ to be larger than any fixed absolute constant.  We let $\chi = \chi_{n,\xi} \colon \Z[\frac{1}{2}] \to \C$ denote the character
\begin{equation}\label{chin}
 \chi(x) \coloneqq e^{-2\pi i \xi (x \mod 3^n) / 3^n}
\end{equation}
where $x \mapsto x \mod 3^n$ is the ring homomorphism from $\Z[\frac{1}{2}]$ to $\Z/3^n\Z$ (mapping $\frac{1}{2}$ to $\frac{1}{2} \mod 3^n = \frac{3^n+1}{2} \mod 3^n$).  Note that $\chi$ is a group homomorphism from the additive group $\Z[\frac{1}{2}]$ to the multiplicative group $\C$, which is periodic modulo $3^n$, so it also descends to a group homomorphism from $\Z/3^n\Z$ to $\C$, which is still defined by the same formula \eqref{chin}. From \eqref{syria}, our task now reduces\footnote{Note that we have reversed the order of variables $\a_1,\dots,\a_n$ from that in \eqref{fn-def}, as this will be a slightly more convenient normalization for the arguments in this section.} to establishing the following claim.

\begin{proposition}[Key Fourier decay estimate]\label{key}  Let $\chi$ be defined by \eqref{chin}, and let $(\a_1,\dots,\a_n) \equiv \Geom(2)^n$ be $n$ iid copies of the geometric distribution $\Geom(2)$ (as defined in Definition \ref{geom}).  Then the quantity
\begin{equation}\label{schi-def}
S_\chi(n) \coloneqq \E \chi( 2^{-\a_1} + 3^1 2^{-\a_{[1,2]}} + \dots + 3^{n-1} 2^{-\a_{[1,n]}} )
\end{equation}
obeys the estimate
\begin{equation}\label{echi}
S_\chi(n) \ll_A n^{-A}
\end{equation}
for any $A>0$, where we use the summation convention $\a_{[i,j]} \coloneqq \a_i + \dots + \a_j$ from \eqref{sum}.
\end{proposition}

\subsection{Estimation in terms of white points}

To extract some usable cancellation in the expression $S_\chi(n)$, we will group the sum on the left-hand side into pairs.  For any real $x>0$, let $[x]$ denote the discrete interval
$$ [x] \coloneqq \{j \in \N+1: j \leq x \} = \{ 1, \dots, \lfloor x\rfloor\}.$$
For $j \in [n/2]$, set $\b_j \coloneqq \a_{2j-1} + \a_{2j}$, so that
$$2^{-\a_1} + 3^1 2^{-\a_{[1,2]}} + \dots + 3^{n-1} 2^{-\a_{[1,n]}} = \sum_{j \in [n/2]} 3^{2j-2} 2^{-\b_{[1,j]}} ( 2^{\a_{2j}} + 3 ) + 3^{n-1} 2^{-\b_{[1,\lfloor n/2\rfloor]}-\a_n}$$
when $n$ is odd, where we extend the summation notation \eqref{sum} to the $\b_j$.  For $n$ even, the formula is the same except that the final term $3^{n-1} 2^{-\b_{[1,\lfloor n/2\rfloor]}-\a_n}$ is omitted.  Note that the $\b_1,\dots,\b_{\lfloor n/2\rfloor}$ are jointly independent random variables taking values in $\N+2 = \{2,3,4,\dots\}$; they are iid copies of a Pascal (or negative binomial) random variable $\Pascal \equiv \mathbf{NB}(2,\frac{1}{2})$ on $\N+2$, defined by
$$ \P( \Pascal = b ) = \frac{b-1}{2^b}$$
for $b \in \N+2$.

For any $j \in [n/2]$, $\a_{2j}$ is independent of all of the $\b_1,\dots,\b_{\lfloor n/2\rfloor}$ except for $\b_j$.  For $n$ odd, $\a_n$ is independent of all of the $\b_j$.  Regardless of whether $n$ is even or odd, once one conditions on all of the $\b_j$ to be fixed, the random variables $\a_{2j}, j \leq [n/2]$ (as well as $\a_n$, if $n$ is odd) are all independent of each other.  We conclude that
$$ S_\chi(n) = \E \left(\prod_{j \in [n/2]} f( 3^{2j-2} 2^{-\b_{[1,j]}}, \b_j )\right) g( 3^{n-1} 2^{-\b_{[1,\lfloor n/2\rfloor]}} )$$
when $n$ is odd, with the factor $g( 2^{-\b_{[1,\lfloor n/2\rfloor]}} )$ omitted when $n$ is even, where $f(x,b)$ is the conditional expectation
\begin{equation}\label{fxb}
 f( x, b ) \coloneqq \E \left( \chi( x (2^{\a_2}+3)) | \a_1 + \a_2 = b \right)
\end{equation}
(with $(\a_1,\a_2) \equiv \Geom(2)^2$) and
$$ g(x) \coloneqq \E \chi( x 2^{-\Geom(2)} ).$$
Clearly $|g(x)| \leq 1$, so by the triangle inequality we can bound
\begin{equation}\label{74}
|S_\chi(n)| \leq \E \prod_{j \in [n/2]} |f( 3^{2j-2} 2^{-\b_{[1,j]}}, \b_j )|
\end{equation}
regardless of whether $n$ is even or odd.

From \eqref{fxb} we certainly have
\begin{equation}\label{fxb-triv}
|f(x,b)| \leq 1.
\end{equation}
We now perform an explicit computation to improve upon this estimate for many values of $x$ (of the form $x = 3^{2j-2} 2^{-l}$) in the case $b = 3$, which is the least value of $b \in \N+2$ for which the event $\a_1+\a_2=b$ does not completely determine $\a_1$ or $\a_2$.  For any $(j,l) \in (\N+1) \times \Z$, we can write
\begin{equation}\label{chi-id}
\chi( 3^{2j-2} 2^{-l+1} ) = e^{-2\pi i \theta(j,l)}
\end{equation}
where $\theta(j,l) = \theta_{n,\xi}(j,l) \in (-1/2,1/2]$ denotes the argument
\begin{equation}\label{alpha-def}
 \theta(j,l) \coloneqq \left\{ \frac{\xi 3^{2j-2} (2^{-l+1} \mod 3^n)}{3^n} \right\}
\end{equation}
and $\{\}\colon \R/\Z \to (-1/2,1/2]$ is the signed fractional part function, thus $\{x\}$ denotes the unique element of the coset $x + \Z$ that lies in $(-1/2,1/2]$.

Let $0 < \eps < \frac{1}{100}$ be a sufficiently small absolute constant to be chosen later; we will take care to ensure that the implied constants in many of our asymptotic estimates do not depend on $\eps$.
Call a point $(j,l) \in [n/2] \times \Z$ \emph{black}\footnote{This choice of notation was chosen purely in order to be consistent with the color choices in Figures \ref{fig:triangle}, \ref{fig:triangles}, \ref{fig:cases}.} if
\begin{equation}\label{wh}
|\theta(j,l)| \leq \eps,
\end{equation}
and \emph{white} otherwise.  We let $B = B_{n,\xi}, W = W_{n,\xi}$ denote the black and white points of $[n/2] \times \Z$ respectively, thus we have the partition $[n/2] \times \Z = B \uplus W$.

\begin{lemma}[Cancellation for white points]  If $(j,l)$ is white, then
$$|f(3^{2j-2} 2^{-l},3)| \leq \exp( -\eps^3 ).$$
\end{lemma}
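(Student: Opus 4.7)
The plan is to compute $f(3^{2j-2}2^{-l}, 3)$ in closed form and reduce the bound to an elementary estimate on a single cosine, using the definition of $\theta(j,l)$.

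First I would unpack the conditional expectation. Since $(\a_1,\a_2)\equiv \Geom(2)^2$, the event $\a_1+\a_2=3$ has positive probability and its conditional distribution is the uniform distribution on the two ordered pairs $(1,2)$ and $(2,1)$. Writing $x=3^{2j-2}2^{-l}$, this gives
\[ f(x,3) \;=\; \tfrac{1}{2}\chi\!\left(x(2^2+3)\right) + \tfrac{1}{2}\chi\!\left(x(2^1+3)\right) \;=\; \tfrac{1}{2}\bigl(\chi(7x)+\chi(5x)\bigr). \]
Next I would use that $\chi$ is a group homomorphism from $(\Z[\tfrac12],+)$ to $(\C^\times,\cdot)$, so $\chi(kx)=\chi(x)^k$ for any integer $k$. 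Factoring out $\chi(6x)$ and pairing the remaining terms,
\[ \chi(7x)+\chi(5x) \;=\; \chi(6x)\bigl(\chi(x)+\chi(-x)\bigr) \;=\; 2\chi(6x)\cos(2\pi\gamma), \]
where $\gamma\in\R/\Z$ is defined by $\chi(x)=e^{-2\pi i\gamma}$. Thus $|f(3^{2j-2}2^{-l},3)|=|\cos(2\pi\gamma)|$.

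Now I would connect $\gamma$ to $\theta(j,l)$. By \eqref{chin} and the homomorphism property, $\chi(3^{2j-2}2^{-l+1})=\chi(2x)=\chi(x)^2=e^{-4\pi i\gamma}$, while by definition this also equals $e^{-2\pi i\theta(j,l)}$. Hence $2\gamma\equiv\theta(j,l)\pmod{1}$, which implies
\[ |\cos(2\pi\gamma)| \;=\; |\cos(\pi\theta(j,l))|, \]
since the two values differ by $(-1)^k$ for some integer $k$. The whiteness hypothesis $|\theta(j,l)|>\eps$ together with $\theta(j,l)\in(-1/2,1/2]$ then places $\pi|\theta(j,l)|$ strictly between $\pi\eps$ and $\pi/2$, so monotonicity of cosine on $[0,\pi/2]$ gives $|\cos(\pi\theta(j,l))|\le \cos(\pi\eps)$.

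The final step is elementary: for $\eps<1/100$, the Taylor expansion $\cos(\pi\eps)\le 1-\tfrac{(\pi\eps)^2}{2}+\tfrac{(\pi\eps)^4}{24}$ yields $\cos(\pi\eps)\le 1-c\eps^2$ for some absolute $c>0$ (e.g.\ $c=4$), whence by $1-y\le e^{-y}$ and $c\eps^2>\eps^3$ (valid for $\eps$ small enough) we conclude $\cos(\pi\eps)\le e^{-\eps^3}$. There is no real obstacle here; the only mild subtlety is ensuring that the factor of $2$ linking $\gamma$ and $\theta(j,l)$ is correctly tracked, but this is absorbed into the identity $|\cos(2\pi\gamma)|=|\cos(\pi\theta(j,l))|$ and does not affect the final bound.
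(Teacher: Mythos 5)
Your proof is correct and follows essentially the same approach as the paper. The only cosmetic difference is the factorization: you pull out $\chi(6x)$ to pair $\chi(x)$ with $\chi(-x)=\overline{\chi(x)}$ and get a real cosine directly, whereas the paper pulls out $\chi(5x)$ and uses $|1+\chi(2x)|/2=\cos(\pi\theta(j,l))$. Both routes land on $|\cos(\pi\theta(j,l))|$ and then finish identically with whiteness and the Taylor estimate on $\cos(\pi\eps)$.
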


\begin{proof}
If $\a_1,\a_2$ are independent copies of $\Geom(2)$, then after conditioning to the event $\a_1+\a_2 = 3$, the pair $(\a_1,\a_2)$ is equal to either $(1,2)$ or $(2,1)$, with each pair occuring with (conditional) probability $1/2$.  From \eqref{fxb} we thus have
$$ f(x,3) = \frac{1}{2} \chi( 5 x ) + \frac{1}{2} \chi( 7 x ) = \frac{\chi(5x)}{2} (1 + \chi(2x))$$
for any $x$, so that
$$ |f(x,3)| = \frac{|1 + \chi(2x)|}{2}.$$
We specialise to the case $x \coloneqq 3^{2j-2} 2^{-l}$.  By \eqref{chi-id} we have
$$  \chi(2x) = e^{-2\pi i \theta(j, \b_{[1,j]})} $$
and hence by elementary trigonometry
$$ |f(3^{2j-2} 2^{-l},3)| = \cos( \pi \theta(j, l) ).$$
By hypothesis we have
$$
 |\theta(j, l)| > \eps
$$
and the claim now follows by Taylor expansion (if $\eps$ is small enough); indeed one can even obtain an upper bound of $\exp(-c\eps^2)$ for some absolute constant $c>0$ independent of $\eps$.
\end{proof}

From the above lemma, \eqref{fxb-triv}, and the law of total probability, we see that
$$ |S_\chi(n)| \leq \E \exp( - \eps^3 \# \{ j \in [n/2]: \b_j = 3, (j,\b_{[1,j]}) \in W \} ).$$
As we shall see later, we can interpret the $(j,\b_{[1,j]})$ with $\b_j=3$ as a two-dimensional renewal process. To establish Proposition \ref{key} (and thus Proposition \ref{f-decay} and Theorem \ref{main}), it thus suffices to show the following estimate.

\begin{proposition}[Renewal process encounters many white points]\label{jeo-prop}
\begin{equation}\label{jeo}
\E \exp( - \eps^3 \# \{ j \in [n/2]: \b_j = 3, (j,\b_{[1,j]}) \in W \} ) \ll_A n^{-A}.
\end{equation}
\end{proposition}

We remark that this proposition is of a simpler nature to establish than Proposition \ref{key} as it is entirely ``non-negative''; it does not require the need to capture any cancellation in an oscillating sum, as was the case in Proposition \ref{key}.

\subsection{Deterministic structural analysis of black points}

The proof of Proposition \ref{jeo-prop} consists of a ``deterministic'' part, in which we understand the structure of the white set $W$ (or the black set $B$), and a ``probabilistic'' part, in which we control the random walk $\b_{[1,j]}$ and the events $\b_j=3$.  We begin with the former task.  Define a \emph{triangle} to be a subset $\Delta$ of $(\N+1) \times \Z$ of the form
\begin{equation}\label{ts}
 \Delta = \{ (j,l): j \geq j_\Delta; l \leq l_\Delta; (j-j_\Delta) \log 9 + (l_\Delta-l) \log 2 \leq s_\Delta \}
\end{equation}
for some $(j_\Delta, l_\Delta) \in (\N+1) \times \Z$ (which we call the \emph{top left corner} of $\Delta$) and some $s_\Delta \geq 0$ (which we call the \emph{size} of $\Delta$); see Figure \ref{fig:triangle}.

\begin{figure} [t]
\centering
\includegraphics[width=4in]{./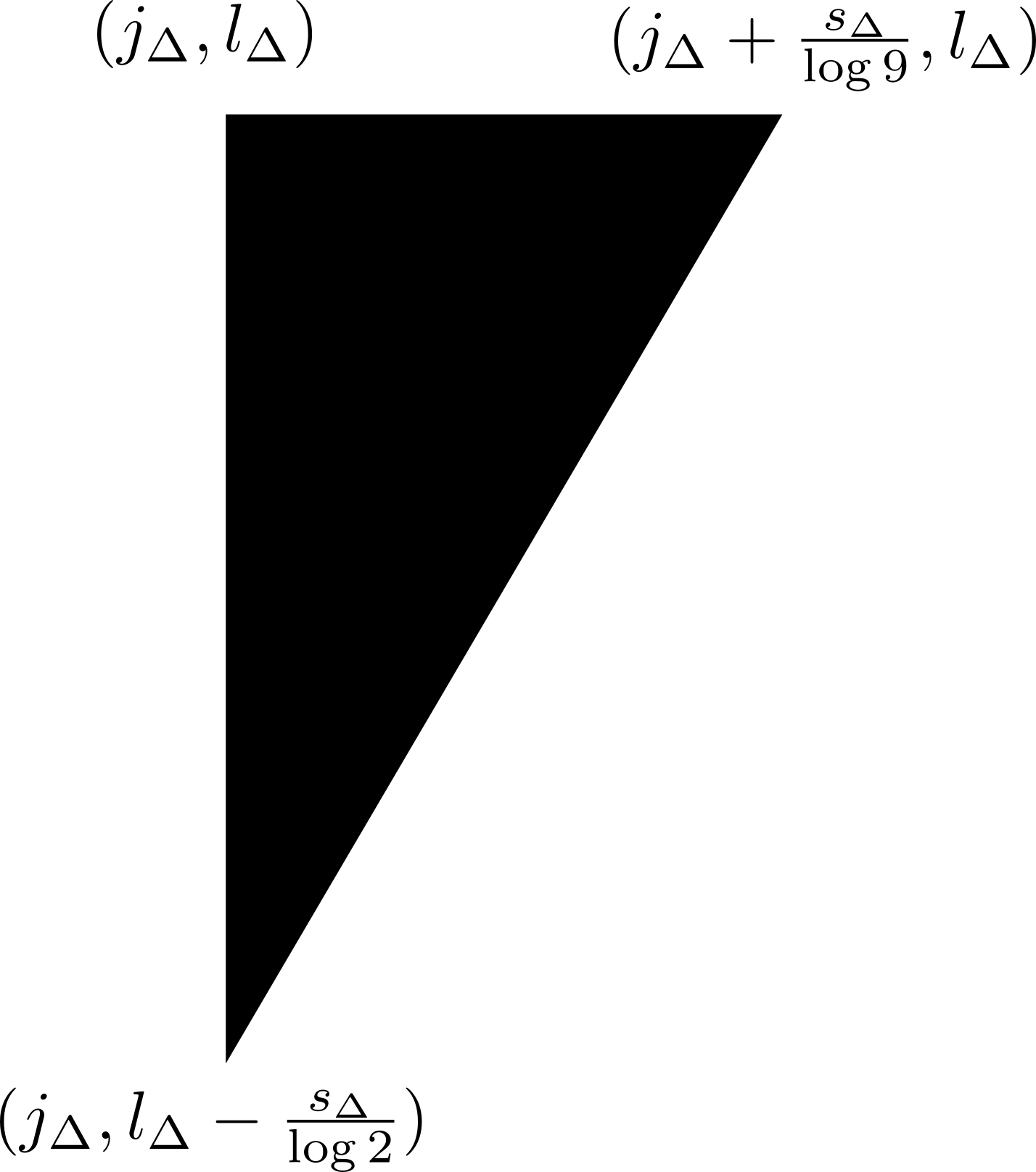}
\caption{A triangle $\Delta$, which we have drawn as a solid region rather than as a subset of the discrete lattice $\Z^2$.}
\label{fig:triangle}
\end{figure}

\begin{lemma}[Structure of black set]\label{black}  The black set $B \subset [n/2] \times \Z$ of points $(j,l)$ with $|\theta(j,l)| \leq \eps$ can be expressed as a disjoint union
$$ B = \biguplus_{\Delta \in {\mathcal T}} \Delta$$
of triangles $\Delta$, each of which is contained in $[\frac{n}{2} - \frac{1}{10} \log \frac{1}{\eps}] \times \Z$.
Furthermore, any two triangles $\Delta,\Delta'$ in ${\mathcal T}$ are separated by a distance $\geq \frac{1}{10} \log \frac{1}{\eps}$ (using the Euclidean metric on $[n/2] \times \Z \subset \R^2$).  (See Figure \ref{fig:triangles}.)
\end{lemma}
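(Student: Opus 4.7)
Work with the lifted angle $\theta'(j,l) \in \R/\Z$ whose distinguished representative in $(-1/2,1/2]$ is $\theta(j,l)$, so that the identities \eqref{ide} become the exact equalities $\theta'(j+1,l)=9\theta'(j,l)$ and $\theta'(j,l-1)=2\theta'(j,l)$ in $\R/\Z$, and write $\delta(j,l) := |\theta(j,l)|$, so that $B = \{(j,l) \in [n/2]\times\Z: \delta(j,l) \leq \eps\}$.  Because $\eps < 1/100 < 1/18$, any monotone down-right path of points with $\delta \leq \eps$ avoids wraparound modulo $\Z$, and $\delta$ multiplies by $9$ per step right and $2$ per step down.  In particular, for any $(j_0,l_0) \in B$ the triangle with corner $(j_0,l_0)$ and size $\log(\eps/\delta(j_0,l_0))$ is automatically contained in $B$.

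The first task is to produce the corners intrinsically.  Declare $(j_\Delta, l_\Delta) \in B$ a \emph{corner} if $(j_\Delta, l_\Delta+1) \notin B$ and either $j_\Delta = 1$ or $(j_\Delta-1, l_\Delta) \notin B$.  For each $(j,l) \in B$ I would consider all $(j^*,l^*) \in B$ with $j^* \leq j$, $l^* \geq l$ reachable to $(j,l)$ along a no-wrap down-right path, equivalently those for which $\delta(j,l) = 9^{j-j^*} 2^{l^*-l}\delta(j^*,l^*)$, and select the one \emph{minimising} $\delta(j^*,l^*)$, equivalently maximising the shift $9^{j-j^*}2^{l^*-l}$.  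Multiplicative independence of $2$ and $9$ forces the optimum to be attained at a unique point $(j_\Delta,l_\Delta)$; optimality together with the observation that any no-wrap step up or left from a non-corner black point lands in $B$ with strictly smaller $\delta$ forces $(j_\Delta,l_\Delta)$ itself to be a corner.  Assigning each $(j,l)$ to its unique optimal corner produces the decomposition $B = \bigsqcup_\Delta \Delta$, with disjointness automatic from uniqueness.

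Containment in $[n/2-c\log(1/\eps)]\times\Z$ will come from the lower bound $\delta(j,l) \geq 3^{-(n-2j+2)}$, itself a consequence of writing $\theta'(j,l) = \xi\, 3^{2j-2}\, u(l)/3^n \bmod \Z$ with $u(l) \in (\Z/3^n\Z)^*$ a unit: the numerator mod $3^n$ has $3$-valuation exactly $2j-2$, so is at least $3^{2j-2}$ in absolute value.  Plugging into $s_\Delta \leq \log\eps + (n-2j_\Delta+2)\log 3$ bounds the rightmost column of $\Delta$ by $j_\Delta + s_\Delta/\log 9 \leq n/2 + 1 - \log(1/\eps)/(2\log 3)$, giving the desired containment.

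The main obstacle is the separation estimate.  For each triangle $\Delta$, I would form the enlarged triangle $\tilde\Delta$ of size $s_\Delta + c'\log(1/\eps)$ with a small $c' \in (0,1)$: since $\eps^{1-c'}<1/2$, the points of $\tilde\Delta \setminus \Delta$ remain in the no-wrap regime emanating from the corner and hence have $\delta > \eps$, i.e.\ are white.  It then suffices to show $\tilde\Delta_a \cap \Delta_b = \emptyset$ for distinct corners.  When the corners are comparable under the up-left partial order, say $(j_a,l_a)$ up-left of $(j_b,l_b)$, distinctness forces wraparound along the monotone path from $(j_a,l_a)$ to $(j_b,l_b)$, yielding $(j_b-j_a)\log 9 + (l_a-l_b)\log 2 > \log(1/(2\delta_a)) = s_a + \log(1/(2\eps))$, which (for $c' < 1$) places $(j_b,l_b)$ strictly beyond the hypotenuse of $\tilde\Delta_a$; since $\Delta_b$ extends still further down-right, the whole of $\Delta_b$ lies outside $\tilde\Delta_a$, and the buffer translates to Euclidean clearance $\gg c'\log(1/\eps)$.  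The remaining ``diagonal'' configuration $j_a < j_b$, $l_a < l_b$ is the subtlest: here I would invoke the exact two-sided identity $9^{j_b-j_a}\theta'(j_a,l_a) = 2^{l_b-l_a}\theta'(j_b,l_b)$ in $\R/\Z$ and split on whether $\max(9^{j_b-j_a}, 2^{l_b-l_a}) \geq 1/(2\eps)$ (giving direct coordinate-wise separation of the corners) or the identity lifts to an exact equality $9^{j_b-j_a}\delta_a = 2^{l_b-l_a}\delta_b$ in $\R$, in which case the corner conditions at both endpoints (ruling out that the ``natural'' pre-images of $\theta'$ under $\times 2$ and $\times 9$ are the small ones) together with the disjointness $\Delta_a \cap \Delta_b = \emptyset$ pin down the geometric gap.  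I expect most of the technical work to lie in this last sub-case.
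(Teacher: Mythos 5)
Your decomposition-first approach is a genuine alternative to the paper's construction (the paper works pointwise: from any black point it walks up, then left, to discover the corner, then verifies via three case analyses that a $c_0\log\frac1\eps$-neighbourhood of the resulting triangle is white outside the triangle itself). The pieces of your argument that are worked out are sound: the intrinsic corner definition, the crude lower bound $\delta(j,l)\geq 3^{-(n-2j+2)}$ for the strip containment, the enlarged triangle $\tilde\Delta$ being white in $\tilde\Delta\setminus\Delta$, and the wraparound argument for corners comparable in the up-left partial order all check out.

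There are, however, two genuine gaps. The smaller one is structural: you assert that the fibres of the ``assign each $(j,l)$ to its $\delta$-minimising no-wrap-reachable corner'' map are the triangles, but this is not automatic. A priori some $(j,l)\in\Delta$ could be no-wrap joined to a corner with smaller $\delta$ that is \emph{not} $(j_\Delta,l_\Delta)$ and is \emph{not} up-left comparable with it --- ruling this out is essentially equivalent to knowing that the white buffer around $\Delta$ exists, i.e.\ the separation statement you are trying to prove. The paper avoids this circularity by first establishing the white buffer around $\Delta_*$ and only then concluding that $l_*(j',l')$ and $j_*(j',l')$ are constant on $\Delta_*$; your write-up needs the same reordering.

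The larger gap is the one you flag yourself: the ``diagonal'' configuration $j_a<j_b$, $l_a<l_b$. This is not a residual technicality --- it is the crux of the lemma. In the paper this corresponds to Case 2 of the proof, where one must show that a black point in the region $j'\geq j_*$, $l'>l_*$ close to $\Delta_*$ contradicts the construction of $l_*$; the key move is the algebraic identity $1 = 3^2 - 4\cdot 2^1$, which (via \eqref{alpha-def}) yields the triangle-inequality recursion $|\theta(j'-1,l_*+1)| \ll |\theta(j',l_*+1)| + |\theta(j'-1,l_*)|$, and one then \emph{iterates} this horizontally to propagate the smallness of $\theta$ all the way to the column $j$. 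The exact two-sided identity $9^{j_b-j_a}\theta'(j_a,l_a)=2^{l_b-l_a}\theta'(j_b,l_b)\bmod\Z$ you invoke is correct, and the dichotomy on whether $\max(9^{j_b-j_a},2^{l_b-l_a})$ exceeds $1/(2\eps)$ is a reasonable opening. But merely knowing that one of $j_b-j_a$, $l_b-l_a$ is $\gg\log\frac1\eps$ does not separate the triangles (the right tip of $\Delta_a$ and the bottom tip of $\Delta_b$ could still be within $O(1)$ even when the corners are far apart); and in the exact-lift sub-case the ``corner conditions at both endpoints'' constraint you mention, while correct as stated, does not by itself pin down a quantitative gap --- you need something that propagates along the intermediate lattice points, which is exactly what the $1 = 3^2 - 4\cdot 2^1$ recursion supplies. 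As written your proposal does not contain that ingredient, and without it the separation estimate is not established.
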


\begin{figure} [t]
\centering
\includegraphics[width=5in]{./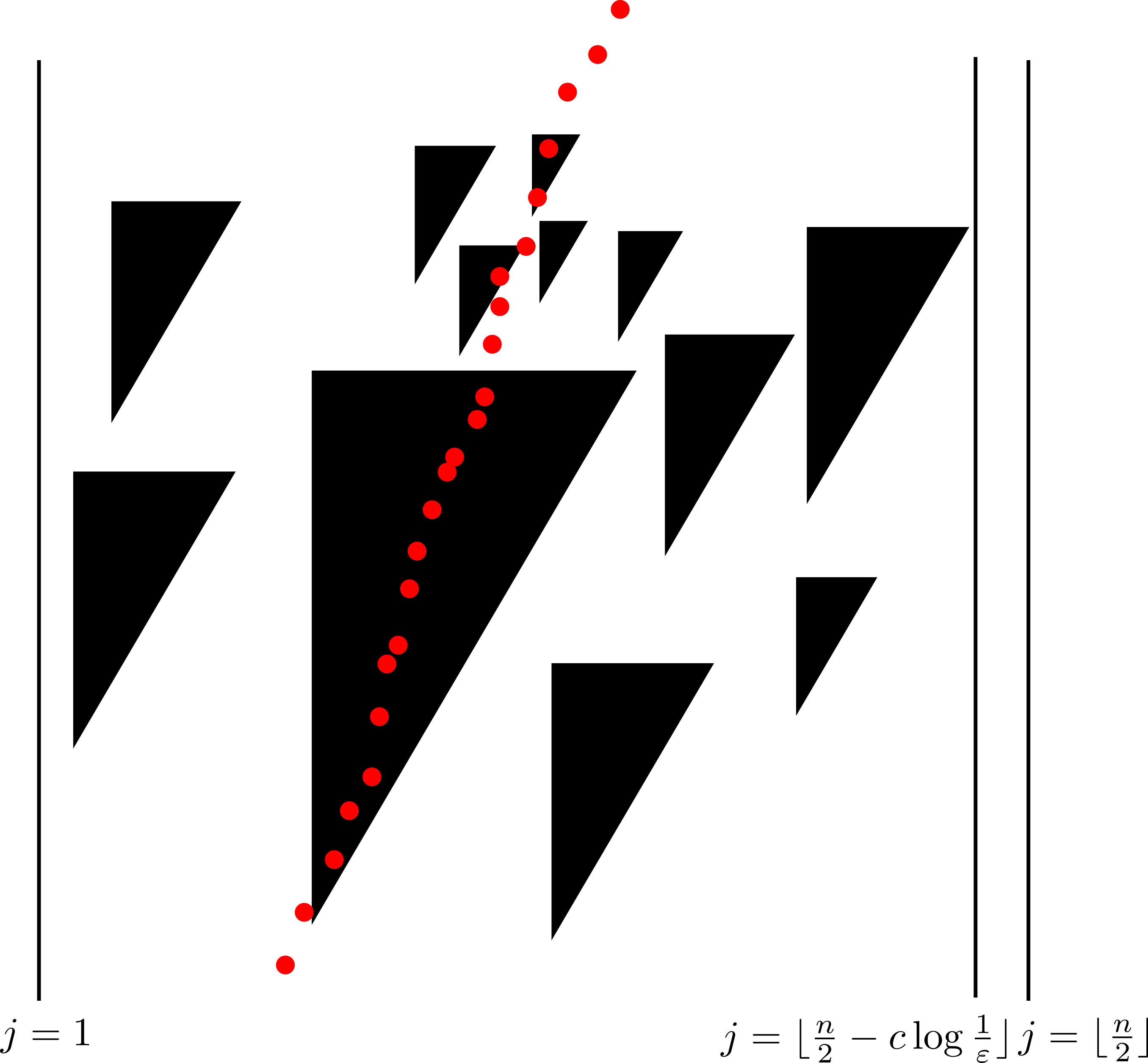}
\caption{The black set is a union of triangles, in the strip $[\frac{n}{2} - \frac{1}{10} \log \frac{1}{\eps}] \times \Z$, that are separated from each other by $\gg \log \frac{1}{\eps}$.  The red dots depict (a portion of) a renewal process $\v_1, \v_{[1,2]}, \v_{[1,3]}$ that we will encounter later in this section; our main objective will be to establish that this process usually contains a fair number of white points.  We remark that the average slope $\frac{16}{4}=4$ of this renewal process will exceed the slope $\frac{\log 9}{\log 2} \approx 3.17$ of the triangle diagonals, so that the process tends to exit a given triangle through its horizontal side.  The coordinate $j$ increases in the rightward direction, while the coordinate $l$ increases in the upward direction.}
\label{fig:triangles}
\end{figure}

\begin{proof}  We first observe some simple relations between adjacent values of $\theta$.
From \eqref{alpha-def} (or \eqref{chi-id}) we observe the identity
\begin{equation}\label{ide}
 3^{2(j_*-j)} 2^{(l-l_*)} \theta(j,l) = \theta(j_*,l_*) \mod \Z
\end{equation}
whenever $j \leq j_*$ and $l \geq l_*$.  Thus for instance
\begin{equation}\label{ide-right}
\theta(j+1,l) = 9\theta(j,l) \mod \Z
\end{equation}
and
\begin{equation}\label{ide-down}
\theta(j,l-1) = 2 \theta(j,l) \mod \Z.
\end{equation}
Among other things, this implies that
$$ \theta(j,l) = \theta(j+1,l) - 4 \theta(j,l-1) \mod \Z$$
and hence by the triangle inequality
\begin{equation}\label{ide-upleft}
|\theta(j,l)| \leq |\theta(j+1,l)| + 4 |\theta(j,l-1)|.
\end{equation}

These identities have the following consequences.  Call a point $(j,l) \in [n/2] \times \Z$ \emph{weakly black} if
$$ |\theta(j,l)| \leq \frac{1}{100}.$$
Clearly any black point is weakly black.  We have the following further claims.
\begin{itemize}
\item[(i)]  If $(j,l)$ is weakly black, and either $(j+1,l)$ or $(j,l-1)$ is black, then $(j,l)$ is black.  (This follows from \eqref{ide-right} or \eqref{ide-down} respectively.)
\item[(ii)] If $(j+1,l), (j,l-1)$ are weakly black, then $(j,l)$ is also weakly black.  (Indeed, from \eqref{ide-upleft} we have $|\theta(j,l)| \leq \frac{5}{100}$, and the claim now follows from \eqref{ide-right} or \eqref{ide-down}.)
\item[(iii)]  If $(j-1, l)$ and $(j,l-1)$ are weakly black, then $(j,l)$ is also weakly black.  (Indeed, from \eqref{ide-right} we have $|\theta(j,l)| \leq \frac{9}{100}$, and the claim now follows from \eqref{ide-down}.)
\end{itemize}

Now we begin the proof of the lemma.  Suppose $(j,l) \in [n/2] \times \Z$ is black, then by \eqref{wh}, \eqref{alpha-def} we have
$$ \frac{\xi 3^{2j-2} (2^{-l+1} \mod 3^n)}{3^n} \in [-\eps, \eps] \mod \Z$$
and hence
$$
 \frac{\xi 3^{n-1} (2^{-l+1} \mod 3^n)}{3^n} \in [-3^{n+1-2j} \eps, 3^{n+1-2j}\eps] \mod \Z.
$$
On the other hand, since $\xi$ is not a multiple of $3$, the expression $\frac{\xi 3^{n-1} (2^{-l+1} \mod 3^n)}{3^n}$ is either equal to $1/3$ or $2/3$ mod $\Z$.  We conclude that
\begin{equation}\label{stil}
3^{n+1-2j}\eps \geq \frac{1}{3},
\end{equation}
so the black points in $[n/2] \times \Z$ actually lie in $[\frac{n}{2} - \frac{1}{10} \log \frac{1}{\eps}] \times \Z$.

Suppose that $(j,l) \in [n/2] \times \Z$ is such that $(j,l')$ is black for all $l' \geq l$, thus
$$ |\theta(j,l')| \leq \eps$$
for all $l' \geq l$.  From \eqref{ide-down} this implies that
$$ \theta(j,l') = 2 \theta(j,l'+1)$$
for all $l' \geq l$, hence
$$ \theta(j,l') \leq 2^{l-l'} \eps$$
for all $l' \geq l$.  Repeating the proof of \eqref{stil}, one concludes that
$$ 3^{n+1-2j} 2^{l-l'} \eps \geq \frac{1}{3},$$
which is absurd for $l'$ large enough.  Thus it is not possible for $(j,l')$ to be black for all $l' \geq l$.

Now let $(j,l) \in [n/2] \times \Z$ be black.  By the preceding discussion, there exists a unique $l_* = l_*(j,l) \geq l$ such that $(j,l')$ is black for all $l \leq l' \leq l_*$, but such that $(j,l_*+1)$ is white.  Now let $j_* = j_*(j,l) \leq j$ be the unique positive integer such that $(j',l_*)$ is black for all $j_* \leq j' \leq j$, but such that either $j_*=1$ or $(j_*-1,l_*)$ is white.  Informally, $(j_*,l_*)$ is obtained from $(j,l)$ by first moving upwards as far as one can go in $B$, then moving leftwards as far as one can go in $B$; see Figure \ref{fig:cases}.  As one should expect from glancing at this figure (or Figure \ref{fig:triangles}), $(j_*,l_*)$ should be the top left corner of the triangle containing $(j,l)$, and the arguments below are intended to support this claim.

By construction, $(j_*,l_*)$ is black, thus by \eqref{wh} we have
\begin{equation}\label{ap}
|\theta(j_*,l_*)| = \eps \exp(-s_*)
\end{equation}
for some $s_* \geq 0$. From \eqref{ide} this implies in particular that
\begin{equation}\label{ap-2}
|\theta(j',l')| \leq \eps \exp(-s_* + (j'-j_*) \log 9 + (l_*-l') \log 2 )
\end{equation}
whenever $j' \geq j_*, l' \geq l_*$, with equality whenever the right-hand side is strictly less than $1/2$.

Let $\Delta_*$ denote the triangle with top left corner $(j_*,l_*)$ and size $s_*$.  If $(j',l') \in \Delta_*$, then by \eqref{ap-2} we have
$$ |\theta(j',l')| \leq 3^{2(j'-j_*)} 2^{(l_*-l')} \eps \exp(-s_*) \leq \eps$$
and hence every element of $\Delta_*$ is black (and thus lies in $[\frac{n}{2} - c \log \frac{1}{\eps}] \times \Z$).

Next, we make the following claim:

\begin{itemize}
\item[(*)] Every point $(j',l') \in [n/2] \times \Z$ that lies outside of $\Delta_*$, but is at a distance of at most $\frac{1}{10} \log \frac{1}{\eps}$ to $\Delta_*$, is white.
\end{itemize}

To verify Claim (*), we divide into three cases (see Figure \ref{fig:cases}):

\begin{figure} [t]
\centering
\includegraphics[width=5in]{./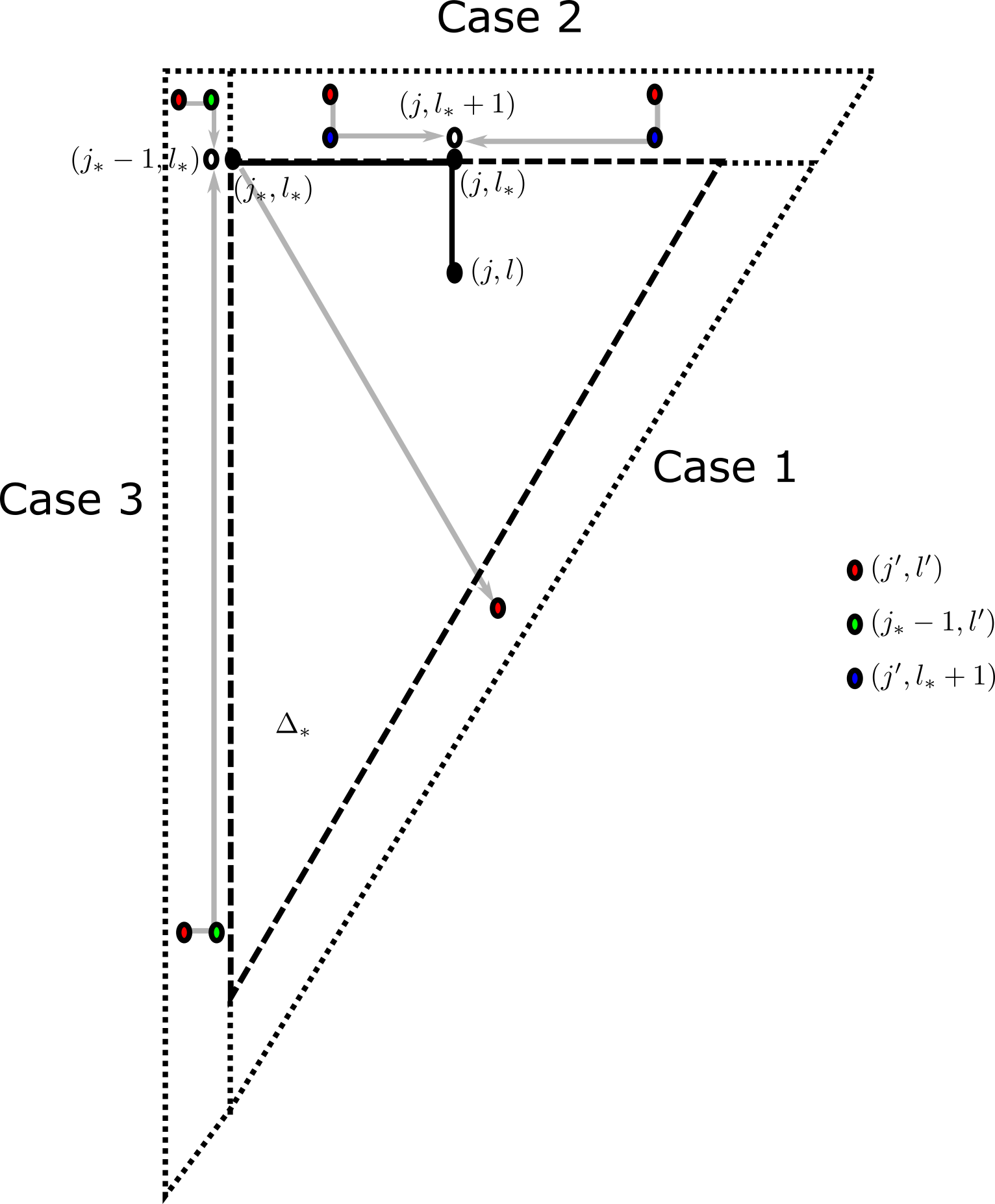}
\caption{The proof of Lemma \ref{black}.  The points connecting $(j,l)$ to $(j,l_*)$, and from $(j,l_*)$ to $(j_*,l_*)$, are known to be black, while the points $(j, l_*+1), (j_*-1, l_*)$ are known to be white.  The point $(j',l')$ can be in various locations, as illustrated by the red dots here.  From \eqref{ap-2} one can obtain that every point in the dashed triangle $\Delta_*$ is black (and every point in the Case 1 region is weakly black), which can treat the Case 1 locations of $(j',l')$ (and also forces $(j,l)$ to lie inside $\Delta_*$).  In Case 2, $(j',l')$ can be to the right or left of $(j,l_*+1)$, but in either case one can show that if $(j',l')$ is black, then $(j',l_*+1)$ (displayed here in blue) is weakly black and hence $(j,l_*+1)$ is weakly black and in fact black, a contradiction.  Similarly, in Case 3, $(j',l')$ can be above or below $(j_*-1,l_*)$, but in either case one can show that if $(j',l')$ is black, then so $(j_*-1,l')$ (displayed here in green) is weakly black and hence $(j_*-1,l_*)$ is weakly black and in fact black, again giving a contradiction.}
\label{fig:cases}
\end{figure}

\textbf{Case 1: $j' \geq j_*, l' \leq l_*$.}  In this case we have from \eqref{ts} that
$$ s_* < (j'-j_*) \log 9 + (l_*-l') \log 2 \leq s_* + \frac{\log 9 + \log 2}{10} \log \frac{1}{\eps}$$
and hence
$$ \eps \exp(-s_* + (j'-j_*) \log 9 + (l_*-l') \log 2 ) \eps^{1-\frac{\log 9 + \log 2}{10}} < \frac{1}{2}.$$
Applying the equality case of \eqref{ap-2}, we conclude that
$$ \theta = \eps \exp(-s_* + (j'-j_*) \log 9 + (l_*-l') \log 2 ) \eps^{1-\frac{\log 9 + \log 2}{10}} > \eps$$
and thus $(j',l')$ is white as claimed.

\textbf{Case 2: $j' \geq j_*, l' > l_*$.}  In this case we have from \eqref{ts} that
\begin{equation}\label{lp}
 0 < (l'-l_*) \log 2 \leq \frac{\log 2}{10} \log \frac{1}{\eps}
\end{equation}
and
\begin{equation}\label{joe}
 (j'-j_*) \log 9 \leq s_* +  \frac{\log 9}{10} \log \frac{1}{\eps}
\end{equation}
(say).  Suppose for contradiction that $(j',l')$ was black, thus
$$ |\theta(j',l')| \leq \eps.$$
From \eqref{lp} and \eqref{ide} (or \eqref{ide-down}) this implies that
$$ |\theta(j',l_*+1)| \leq \eps^{1-\frac{\log 2}{10}},$$
so in particular $(j',l_*+1)$ is weakly black.

If $j' \geq j$, then from \eqref{ap-2}, \eqref{joe} we also have
\begin{equation}\label{theps}
 |\theta(j'-1,l_*)| \leq \eps^{1-\frac{\log 9}{10}},
\end{equation}
thus $(j'-1,l_*)$ is weakly black.  Applying claim (ii) and the fact that $(j',l_*+1)$ is weakly black, we conclude that $(j'-1,l_*+1)$ is weakly black.  Iterating this argument, we conclude that $(j'',l_*+1)$ is weakly black for all $j_* \leq j'' \leq j'$.  In particular, $(j,l_*+1)$ is weakly black; since $(j,l_*)$ is black by construction of $l_*$, we conclude from Claim (i) that $(j,l_*+1)$ is black.  But this contradicts the construction of $l_*$.

Now suppose that $j' < j$.  From construction of $l_*, j_*$ we see that $(j'+1,l_*)$ is black, hence weakly black; since $(j',l_*+1)$ is weakly black, we conclude from Claim (iii) that $(j'+1,l_*+1)$ is weakly black.  Iterating this argument we conclude that $(j'',l_*+1)$ is weakly black for all $j' \leq j'' \leq j$, thus in particular $(j,l_*+1)$ is weakly black, and we obtain a contradiction as before.

\textbf{Case 3: $j' < j_*$.}  Clearly this implies $j_* > 1$; also, from \eqref{ts} we have
\begin{equation}\label{lolo}
 - \frac{\log 2}{10} \log \frac{1}{\eps} \leq (l_*-l') \log 2 \leq s_* + \frac{\log 2}{10} \log \frac{1}{\eps}
\end{equation}
and
\begin{equation}\label{jojo}
 0 < (j_* - j') \log 9 \leq \frac{\log 9}{10} \log \frac{1}{\eps}.
\end{equation}
Suppose for contradiction that $(j',l')$ was black, thus
$$ |\theta(j',l')|  \leq \eps.$$
From \eqref{jojo} and \eqref{ide} (or \eqref{ide-right}) we thus have
\begin{equation}\label{jl}
 |\theta(j_*-1,l')| \leq \eps^{1-\frac{\log 9}{10}}.
\end{equation}
If $l' \geq l_*$, then from \eqref{lolo}, \eqref{ide} we then have
$$ |\theta(j_*-1,l_*)| \leq \eps^{1-\frac{\log 9 + \log 2}{10}},$$
so $(j_*-1,l_*)$ is weakly black.  By construction of $j_*$, $(j_*,l_*)$ is black, hence by Claim (i) $(j_*-1,l_*)$ is black, contradicting the construction of $j_*$.

Now suppose that $l' < l_*$.  From \eqref{jl}, $(j_*-1, l')$ is weakly black.  On the other hand, from \eqref{lolo}, \eqref{ap-2} that
$$ |\theta(j_*,l'+1)| \leq \eps^{1-\frac{\log 2}{10}}$$
so $(j_*, l'+1)$ is also weakly black.  By Claim (ii), this implies that $(j_*-1, l'+1)$ is weakly black.  Iterating this argument we see that $(j_*-1, l'')$ is weakly black for all $l' \leq l'' \leq l_*$, hence $(j_*-1,l_*)$ is weakly black and we can obtain a contradiction as before.  This concludes the treatment of Case 3 of Claim (*).

We have now verified Claim (*) in all cases.  From this claim and the construction $(j_*,l_*)$ from $(j,l)$, we now see that $(j,l)$ must lie in $\Delta_*$; indeed, if $(j,l_*)$ was outside of $\Delta_*$ then one of the (necessarily black) points between $(j_*,l_*)$ and $(j,l_*)$ would violate Case 1 of Claim (*), and similarly if $(j,l_*)$ was in $\Delta_*$ but $(j,l)$ was outside $\Delta_*$ then one of the (necessarily black points) between $(j,l_*)$ and $(j,l)$ would again violate Case 1 of Claim (*); see Figure \ref{fig:cases}.  Furthermore, for any $(j',l') \in \Delta_*$, that $l_*(j',l') = l_*$ and $j_*(j',l') = j_*$.  In other words, we have
$$ \Delta_* = \{ (j',l') \in B: l_*(j',l') = l_*; j_*(j',l') = j_* \},$$
and so the triangles $\Delta_*$ form a partition of $B$.  By the preceding arguments we see that these triangles lie in $[\frac{n}{2} - \frac{1}{10} \log \frac{1}{\eps}] \times \Z$ and are separated from each other by at least $\frac{1}{10} \log \frac{1}{\eps}$.  This proves the lemma.
\end{proof}

\begin{remark} One can say a little bit more about the structure of the black set $B$; for instance, from Euler's theorem we see that $B$ is periodic with respect to the vertical shift $(0, 2 \times 3^{n-1})$ (cf. Lemma \ref{recursive}), and one could use Baker's theorem \cite{baker} that (among other things) establishes a Diophantine property of $\frac{\log 3}{\log 2}$ in order to obtain some further control on $B$.  However, we will not exploit any further structure of the black set in our arguments beyond what is provided by Lemma \ref{black}.
\end{remark}

\subsection{Formulation in terms of holding time}

We now return to the probabilistic portion of the proof of Proposition \ref{jeo-prop}.  Currently we have a finite sequence $\b_1,\dots,\b_{\lfloor n/2\rfloor}$ of random variables that are iid copies of the sum $\a_1+\a_2$ of two independent copies $\a_1,\a_2$ of $\Geom(2)$.  We may extend this sequence to an infinite sequence $\b_1,\b_2,\b_3,\dots$ of iid copies of $\a_1+\a_2$.  Recalling from definition that $W$ is a subset of $[n/2] \times \Z$, the point $(j,\b_{[1,j]})$ can only lie in $W$ when $j \in [n/2]$. Thus the left-hand side of \eqref{jeo} can then be written as
$$
\E \exp( - \eps^3 \# \{ j \in \N+1: \b_j = 3, (j,\b_{[1,j]}) \in W \} ).$$

We now describe the random set $\{ (j,\b_{[1,j]}): j \in \N+1, \b_j = 3\}$ as\footnote{We are indebted to Marek Biskup for this suggestion.} a two-dimensional renewal process (a special case of a \emph{renewal-reward process}).  Since the events $\b_j=3$ are independent and each occur with probability
\begin{equation}\label{pj3}
 \P( \b_j = 3 ) = \P( \Pascal = 3 ) = \frac{1}{4} > 0,
\end{equation}
we see that almost surely one has $\b_j=3$ for at least one $j \in \N$.  Define the \emph{two-dimensional holding time} $\Hold \in (\N+1) \times (\N+2)$ to be the random shift $(\j,\b_{[1,\j]})$, where $\j$ is the least positive integer for which $\b_\j =3$; this random variable is almost surely well defined.  Note from \eqref{pj3} that the first component $\j$ of $\Hold$ has the distribution $\j \equiv \Geom(4)$.
A little thought then reveals that the random set
\begin{equation}\label{bas}
 \{ (j,\b_{[1,j]}): j \in \N+1, \b_j = 3\}
\end{equation}
has the same distribution as the random set
\begin{equation}\label{vas}
 \{ \v_1, \v_{[1,2]}, \v_{[1,3]}, \dots \},
\end{equation}
where $\v_1, \v_2, \dots$ are iid copies of $\Hold$, and we extend the summation notation \eqref{sum} to the $\v_j$, thus for instance $\v_{[1,k]} \coloneqq \v_1 + \dots + \v_k$.  In particular, we have
$$
 \# \{ j \in \N+1: \b_j = 3, (j,\b_{[1,j]}) \in W \} \equiv \# \{ k \in \N+1: \v_{[1,k]} \in W \},$$
and so we can write the left-hand side of \eqref{jeo} as
\begin{equation}\label{jeo-2}
 \E \prod_{k \in \N+1} \exp( - \eps^3 1_W(\v_{[1,k]}) );
\end{equation}
note that all but finitely many of the terms in this product are equal to $1$.

We now pause our analysis of \eqref{jeo}, \eqref{jeo-2} to record some basic properties about the distribution of $\Hold$.

\begin{lemma}[Basic properties of holding time]\label{Exptail}  The random variable $\Hold$ has exponential tail (in the sense of \eqref{pvl}), is not supported in any coset of any proper subgroup of $\Z^2$, and has mean $(4,16)$.  In particular, the conclusion of Lemma \ref{chern} holds for $\Hold$ with $\vec \mu = (4,16)$.
\end{lemma}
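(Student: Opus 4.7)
The plan is to verify each of the three claimed properties of $\Hold = (\j, \b_{[1,\j]})$ in turn. Since $\Pascal \equiv \Geom(2) + \Geom(2)$ with $\E \Geom(2) = 2$, we have $\E \Pascal = 4$, and the explicit formula $\P(\Pascal = 3) = 2/2^3 = 1/4$ shows that the waiting time $\j$ is geometric with mean $4$, so $\j$ has exponential tail and $\E \j = 4$. This gives the first coordinate of the mean. For the second coordinate, I would invoke Wald's identity: $\j$ is a stopping time with respect to the natural filtration of $(\b_i)$, $\E \j < \infty$, and the $\b_i$ are iid with $\E |\b_1| < \infty$, so $\E \b_{[1,\j]} = \E \j \cdot \E \Pascal = 4 \cdot 4 = 16$. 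This establishes $\vec\mu = (4,16)$.

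For the exponential tail of $\Hold$, I would split according to the size of $\j$. Since $\b_i \geq 2$ for all $i$ we have $|\Hold| \leq \j + \b_{[1,\j]} \leq 2\b_{[1,\j]}$, so it suffices to control $\b_{[1,\j]}$. For a constant $C$ to be chosen later, set $K \coloneqq \lceil \lambda/C \rceil$ and estimate
\[
\P(\b_{[1,\j]} \geq \lambda) \;\leq\; \P(\j \geq K) + \P(\b_{[1,K]} \geq \lambda),
\]
using that on $\{\j < K\}$ one has $\b_{[1,\j]} \leq \b_{[1,K]}$ since each $\b_i$ is nonnegative. The first summand is $\ll \exp(-cK) \ll \exp(-c\lambda/C)$ by the geometric distribution of $\j$. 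For the second, $\Pascal$ has moment generating function finite in a neighbourhood of the origin (immediate from $\P(\Pascal = b) = (b-1)2^{-b}$), so a standard Chernoff bound for a sum of $K$ iid copies of $\Pascal$ (each of mean $4$) gives $\P(\b_{[1,K]} \geq \lambda) \ll \exp(-c\lambda)$ once $C$ is chosen large enough that $4K \leq \lambda/2$. This yields $\P(|\Hold| \geq \lambda) \ll \exp(-c_0\lambda)$, as required.

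For the non-degeneracy, I would exhibit explicit atoms of $\Hold$ whose pairwise differences $\Z$-span $\Z^2$. The event $\j = 1$ (i.e.\ $\b_1 = 3$) gives $\Hold = (1,3)$; the events $\j = 2$, $\b_1 = b$ (for $b \in \N+2 \setminus \{3\}$) give $\Hold = (2, b+3)$; in particular, $b = 2, 4, 5$ each contribute with positive probability, yielding the points $(2,5), (2,7), (2,8)$. The differences from $(1,3)$ are $(1,2), (1,4), (1,5)$; since $(1,5)-(1,4) = (0,1)$ together with $(1,2)$ has determinant $-1$, these generate all of $\Z^2$. Hence the support of $\Hold$ is not contained in any coset of any proper subgroup of $\Z^2$, and all hypotheses of Lemma~\ref{chern} are met for $\Hold$ with $d=2$ and $\vec\mu=(4,16)$.

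None of these steps poses a real obstacle. The only substantive checks are the applicability of Wald's identity (legitimate because $\j$ is a genuine stopping time with $\E\j < \infty$ and $\E|\b_1| < \infty$) and that the three chosen atoms of the support span the full lattice rather than a sublattice of index $>1$; both are routine verifications.
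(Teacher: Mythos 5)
Your proposal is correct, and it reaches the same conclusion as the paper but by noticeably different routes on two of the three subclaims. For the mean, the paper derives $\E\Hold$ from a distributional self-similarity: $\Hold$ equals $(1,3)$ with probability $\tfrac14$ and otherwise has the law of $(1,\Pascal')+\Hold'$ with $\Pascal'$ the conditional Pascal and $\Hold'$ an independent copy, yielding the linear equation $\E\Hold=(1,\E\Pascal)+\tfrac34\E\Hold$. You instead invoke Wald's identity for the stopped sum $\b_{[1,\j]}$, which is a clean shortcut and equally rigorous once you observe (as you do) that $\j$ is a genuine stopping time with $\E\j=4<\infty$ and $\E\Pascal=4<\infty$. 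For the exponential tail, the paper computes the moment generating function $\E\exp(\Hold\cdot k)$ explicitly as a geometric series built from the same recursive decomposition and shows it is finite near zero, then applies Markov. You instead split on $\j\geq K$ versus $\j<K$ with $K\asymp\lambda$, bound the first event by the geometric tail of $\j$, and bound the second by a Chernoff estimate for the unconditioned sum $\b_{[1,K]}$; the pointwise domination $\b_{[1,\j]}\leq\b_{[1,K]}$ on $\{\j<K\}$ that you use is valid since the $\b_i$ are nonnegative, and this cleanly sidesteps the dependence between $\j$ and $\b_{[1,\j]}$. For non-degeneracy both arguments exhibit atoms of the law in the same family $\{(1,3)\}\cup\{(2,b+3):b\in\N+2\setminus\{3\}\}$; you make the lattice-spanning step fully explicit by checking that the difference vectors $(1,2)$ and $(0,1)$ have determinant $\pm1$, which the paper leaves implicit. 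On balance, your Wald route for the mean is arguably shorter, while the paper's MGF route for the tail is arguably more self-contained (it avoids quoting a Chernoff bound for $\Pascal$ separately), but there is no gap in what you have written.
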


\begin{proof}  From the definition of $\Hold$ and \eqref{pj3}, we see that $\Hold$ is equal to $(1,3)$ with probability $1/4$, and on the remaining event of probability $3/4$, it has the distribution of $(1,\Pascal') + \Hold'$, where $\Pascal'$ is a copy of $\Pascal$ that is conditioned to the event $\Pascal \neq 3$, so that
\begin{equation}\label{beb}
 \P( \Pascal' = b ) = \frac{4}{3} \frac{b-1}{2^b}
\end{equation}
for $b \in \N+2 \backslash \{3\}$, and $\Hold'$ is a copy of $\Hold$ that is independent of $\Pascal'$.   Thus $\Hold$ has the distribution of $(1,3) + (1,\b'_1) + \dots + (1,\b'_{\j-1})$, where $\b'_1,\b'_2,\dots$ are iid copies of $\Pascal'$ and $\j \equiv \Geom(4)$ is independent of the $\b'_j$.  In particular, for any $k = (k_1,k_2) \in \R^2$, one has from monotone convergence that
\begin{equation}\label{vk}
\E \exp( \Hold \cdot k ) = \sum_{j \in \N} \frac{1}{4} \left(\frac{3}{4}\right)^{j-1} \exp\left( (1,3) \cdot k\right) \left(\E \exp( (1,\Pascal') \cdot k) \right)^j.
\end{equation}
From \eqref{beb} and dominated convergence, we have $\E \exp( (1,\Pascal') \cdot k ) < \frac{4}{3}$ for $k$ sufficiently close to $0$, which by \eqref{vk} implies that $\E \exp( \Hold \cdot k ) < \infty$ for $k$ sufficiently close to zero.  This gives the exponential tail property by Markov's inequality.

Since $\Hold$ attains the value $(1,3)+(1,b)$ for any $b \in \N+2 \backslash \{3\}$ with positive probability, as well as attaining $(1,3)$ with positive probability, we see that the support of $\Hold$ is not supported in any coset of any proper subgroup of $\Z^2$.  Finally, from the description of $\Hold$ at the start of this proof we have
$$ \E \Hold = \frac{1}{4} (1,3) + \frac{3}{4} \left((1,\E \Pascal') + \E \Hold\right);$$
also, from the definition of $\Pascal'$ we have
$$ \E \Pascal = \frac{1}{4} 3 + \frac{3}{4} \E \Pascal'.$$
We conclude that
$$ \E \Hold = (1,\E \Pascal) + \frac{3}{4} \E \Hold;$$
since $\E \Pascal = 2 \E \Geom(2) =4$, we thus have $\E \Hold = (4,16)$ as required.
\end{proof}

The following lemma allows us to control the distribution of first passage locations of renewal processes with holding times $\equiv \Hold$, which will be important for us as it lets us understand how such renewal processes exit a given triangle $\Delta$:

\begin{lemma}[Distribution of first passage location]\label{stop} Let $\v_1,\v_2,\dots$ be iid copies of $\Hold$, and write $\v_k = (\j_k,\l_k)$.  Let $s \in \N$, and define the first passage time $\k$ to be the least positive integer such that $\l_{[1,k]} > s$.  Then for any $j,l \in \N$ with $l > s$, one has
$$
\P( \v_{[1,\k]} = (j,l) ) \ll \frac{e^{-c(l-s)}}{(1+s)^{1/2}} G_{1+s}\left( c\left(j - \frac{s}{4}\right) \right),$$
where $G_{1+s}(x) = \exp(-\frac{|x|^2}{1+s}) + \exp(-|x|)$ was the function defined in \eqref{gaussian-def}.
\end{lemma}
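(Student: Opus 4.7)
The plan is to decompose by the first-passage index $\k$ and the last jump $\v_\k$. If $\k = k$ and $\v_k = (a,b)$, then $\v_{[1,k-1]} = (j-a, l-b)$, and the conditions $\l_{[1,k-1]} \leq s < \l_{[1,k]} = l$ together with $b \geq 3$ (since $\Hold$ takes values in $(\N+1)\times(\N+2)$) become $b \geq \max(3, l-s)$ and $a \geq 1$. Using independence of $\v_{[1,k-1]}$ from $\v_k$, for $k \geq 2$ we obtain
$$\P(\v_{[1,\k]}=(j,l),\, \k=k) = \sum_{\substack{a \geq 1 \\ b \geq \max(3,l-s)}} \P(\v_{[1,k-1]}=(j-a,l-b))\, \P(\Hold=(a,b)),$$
while the $k=1$ term is just $\P(\Hold=(j,l))$, which is bounded directly by the exponential tail.

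From Lemma \ref{Exptail}, $\Hold$ has exponential tail and mean $\vec\mu = (4,16)$, so $\P(\Hold=(a,b)) \ll e^{-c(a+b)}$; the constraint $b \geq l-s$ extracts a factor $e^{-c(l-s)}$ and leaves a summable geometric weight in the excess $b - (l-s)_+$. Lemma \ref{chern}(i) supplies the two-dimensional local limit bound
$$\P(\v_{[1,k-1]}=(j-a,l-b)) \ll \frac{1}{k}\, G_{k-1}\bigl(c(j-a-4(k-1),\, l-b-16(k-1))\bigr)$$
for $k \geq 2$.

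The central step is to sum these bounds in $k$ at fixed $(a,b)$, producing what is essentially a two-dimensional renewal density. The Gaussian factor in the $l$-coordinate localizes $k-1$ to a window of width $O(\sqrt{1+(l-b)})$ about $(l-b)/16$; on this window $1/k \asymp 1/(1+(l-b))$ and the remaining Gaussian in the $j$-coordinate collapses to the one-dimensional $G_{l-b}(c(j-a-(l-b)/4))$. Summing a weight of size $1/(1+(l-b))$ over a window of width $\sqrt{1+(l-b)}$ yields the key renewal estimate
$$\sum_{k \geq 2} \P(\v_{[1,k-1]} = (j-a, l-b)) \ll \frac{1}{\sqrt{1+(l-b)}}\, G_{l-b}\bigl(c'(j-a-(l-b)/4)\bigr),$$
with the non-Gaussian region of $G_{k-1}$ (where $|x| \gtrsim k$) contributing negligibly thanks to its exponential tail. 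Finally we sum over $(a,b)$: the weight $e^{-c(a+b)}$ concentrates the sum on $a = O(1)$ and $b = (l-s)_+ + O(1)$, so $l-b = s + O(1)$ and $j-a-(l-b)/4 = j - s/4 + O(1)$ in the dominant range, delivering
$$\P(\v_{[1,\k]}=(j,l)) \ll \frac{e^{-c(l-s)}}{\sqrt{1+s}}\, G_{1+s}\bigl(c(j-s/4)\bigr).$$

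The principal obstacle is the renewal estimate above: converting the naive $1/(1+s)$ coming from a single application of the two-dimensional local CLT into $1/\sqrt{1+s}$ requires carefully summing out the gaussian in the $l$-direction, handling the transition between the gaussian core and the exponential tails of $G_{k-1}$, and separately treating small values of $l-b$ where the local CLT degrades. Those edge cases can be absorbed by bounding $\P(\Hold = (a,b))$ directly via its exponential tail, so the remaining steps are essentially bookkeeping.
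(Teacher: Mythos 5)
Your proposal follows essentially the same route as the paper's proof: decompose over the first-passage index and the final jump, use the exponential tail of $\Hold$ to pull out the factor $e^{-c(l-s)}$, apply Lemma~\ref{chern}(i) as the local limit ingredient, and sum in $k$ to produce the one-dimensional renewal kernel $(1+s')^{-1/2}G_{1+s'}(c(j'-s'/4))$. The paper merely organizes the bookkeeping slightly differently (it performs the $k$-summation last, after geometrically summing out the last increment's coordinates), but the decomposition, the intermediate estimates, and the final routine calculation are the same.
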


Informally, this lemma asserts that as a rough first approximation one has
\begin{equation}\label{rough}
 \v_{[1,\k]} \approx \Unif\left( \left\{ (j,l): j = \frac{s}{4} + O( (1+s)^{1/2}); s < l \leq s + O(1) \right\} \right).
\end{equation}

\begin{proof}  Note that by construction of $\k$ one has $\l_{[1,\k]} - \l_\k \leq s$, so that $\l_\k \geq \l_{[1,\k]}-s$.  From the union bound, we therefore have
$$ \P( \v_{[1,\k]} = (j,l) ) \leq \sum_{k \in \N+1} \P( (\v_{[1,k]} = (j,l)) \wedge (\l_k \geq l - s) );$$
since $\v_k$ has the exponential tail and is independent of $\v_1,\dots,\v_{k-1}$, we thus have
$$ \P( \v_{[1,\k]} = (j,l) ) \ll \sum_{k \in \N+1} \sum_{l_k \geq l-s} \sum_{j_k \in \N+1} e^{- c(j_k+l_k)} \P( \v_{[1,k-1]} = (j-j_k,l-l_k) ).$$
Writing $l_k = l - s + l'_k$, we then have
\begin{align*}
\P( \v_{[1,\k]} = (j,l) ) &\ll e^{-c(l-s)} \sum_{k \in \N+1} \sum_{l'_k \in \N} \sum_{j_k \in \N+1} \\
&\quad\quad\quad e^{-c(j_k+l'_k)} \P( \v_{[1,k-1]} = (j-j_k,s-l'_k) ).
\end{align*}
We can restrict to the region $l'_k \leq s$, since the summand vanishes otherwise.  It now suffices to show that
\begin{equation}\label{head}
\begin{split}
&\sum_{k \in \N+1} \sum_{0 \leq l_k' \leq s} \sum_{j_k \in \N+1} e^{- c(j_k+l'_k)} \P\left( \v_{[1,k-1]} = (j-j_k,s-l'_k) \right)\\
&\quad \ll (1+s)^{-1/2} G_{1+s}\left( c(j - \frac{s}{4}) \right).
\end{split}
\end{equation}
This is in turn implied by
\begin{equation}\label{turn}
\begin{split}
&\sum_{k \in \N+1} \sum_{0 \leq l_k' \leq s} e^{-cl'_k} \P( \v_{[1,k-1]} = (j',s-l'_k) ) \\
&\quad \ll (1+s)^{-1/2} G_{1+s}\left( c(j' - \frac{s}{4}) \right)
\end{split}
\end{equation}
for all $j' \in \Z$, since \eqref{head} then follows by replacing $j'$ by $j - j_k$, multiplying by $\exp(-cj_k)$, and summing in $j_k$ (and adjusting the constants $c$ appropriately).  In a similar vein, it suffices to show that
$$\sum_{k \in \N+1} \P( \v_{[1,k-1]} = (j',s') ) \ll (1+s')^{-1/2} G_{1+s'}\left( c(j' - \frac{s'}{4}) \right)$$
for all $s' \in \N$, since \eqref{turn} follows after setting $s' = s - l'_k$, multiplying by $\exp(-cl'_k)$, and summing in $l'_k$ (splitting into the regions $l'_k \leq s/2$ and $l'_k > s/2$ if desired to simplify the calculations).

From Lemma \ref{Exptail} and Lemma \ref{chern} one has
$$ \P( \v_{[1,k-1]} = (j',s') ) \ll k^{-1} G_{k-1}\left( c ((j',s') - (k-1)(4,16))\right),$$
and the claim now follows from summing in $k$ and a routine calculation (splitting for instance into the regions $16 (k-1) \in [s'/2,2s']$, $16(k-1) < s'/2$, and $16(k-1)>2s'$).
\end{proof}

\subsection{Recursively controlling a maximal expression}

We return to the study of the left-hand side of \eqref{jeo}, which we have expressed as \eqref{jeo-2}.
For any $(j,l) \in \N+1 \times \Z$, let $Q(j,l)$ denote the quantity
\begin{equation}\label{qjl-def}
 Q(j,l) \coloneqq \E \prod_{k \in \N} \exp( - \eps^3 1_W((j,l) + \v_{[1,k]}) )
\end{equation}
then we have the recursive formula
\begin{equation}\label{recurse}
 Q(j,l) = \exp( - \eps^3 1_W(j,l) ) \E Q((j,l) + \Hold).
\end{equation}
Observe that for each $(j,l) \in \N+1 \times \Z$, we have the conditional expectation
$$ \E\left( \prod_{k \in \N+1} \exp( - \eps^3 1_W(\v_{[1,k]}) ) | \v_1 = (j,l) \right) = Q(j,l)$$
since after conditioning on $\v_1 = (j,l)$ then the $\v_{[1,k]}$ have the same distribution as $(j,l) + \v'_{[1,k-1]}$ where $\v'_1,\v'_2,\dots$ is another sequence of iid copies of $\Hold$.  Since $\v_1$ has the distribution of $\Hold$, we conclude from the law of total probability that
$$ \E \prod_{k \in \N+1} \exp( - \eps^3 1_W(\v_{[1,k]}) ) = \E Q(\Hold).$$
From \eqref{jeo-2} we thus see that we can rewrite the desired estimate \eqref{jeo} as
\begin{equation}\label{jeo-3}
 \E Q(\Hold) \ll_A n^{-A}.
\end{equation}
One can think of $Q(j,l)$ as a quantity controlling how often one encounters white points when one walks along a two-dimensional renewal process  $(j,l), (j,l) + \v_1, (j,l)+\v_{[1,2]}, \dots$ starting at $(j,l)$ with holding times given by iid copies of $\Hold$.  The smaller this quantity is, the more white points one is likely to encounter.  The main difficulty is thus to ensure that this renewal process is usually not trapped within the black triangles $\Delta$ from Lemma \ref{black}; as it turns out (and as may be evident from an inspection of Figure \ref{fig:triangles}), the large triangles will be the most troublesome to handle (as they are so large compared to the narrow band of white points surrounding them that are provided by Lemma \ref{black}).

Suppose that we can prove a bound of the form
\begin{equation}\label{qla}
Q(j,l) \ll_A \max(\lfloor n/2\rfloor -j,1)^{-A}
\end{equation}
for all $(j,l) \in (\N+1) \times \Z$; this is trivial for $j \geq n/2$ but becomes increasingly non-trivial for smaller values of $j$.  Then
$$ Q(\Hold) \ll_A \max(\lfloor n/2\rfloor -\j,1)^{-A} \ll_A n^{-A} \j^A$$
where $\j \equiv \Geom(4)$ is the first component of $\Hold$.  As $\Geom(4)$ has exponential tail, we conclude \eqref{jeo-3} and hence Proposition \ref{jeo-prop}, which then implies Propositions \ref{key}, \ref{f-decay} and Theorem \ref{main}.

It remains to prove \eqref{qla}.  Roughly speaking, we will accomplish this by a downwards induction on $j$, or more precisely, by an upwards induction on a quantity $m$, which is morally equivalent to $\lfloor n/2\rfloor - j$.  To make this more precise, it is convenient to introduce the quantities $Q_m$ for any $m \in [n/2]$ by the formula
\begin{equation}\label{qm-def}
 Q_m \coloneqq \sup_{(j,l) \in (\N+1) \times \Z: j \geq \lfloor n/2\rfloor - m} \max(\lfloor n/2\rfloor -j,1)^A Q(j,l).
\end{equation}
Clearly we have
\begin{equation}\label{qma}
Q_m \leq m^A,
\end{equation}
since $Q(j,l) \leq 1$ for all $j,l$; this bound can be thought of as supplying the ``base case'' for our induction).  We trivially have $Q_m \geq Q_{m-1}$ for any $1 \leq m \leq n/2$.  We will shortly establish the opposite inequality:

\begin{proposition}[Monotonicity]\label{mono-prop}  We have
\begin{equation}\label{mono}
Q_m \leq Q_{m-1}
\end{equation}
whenever $C_{A,\eps} \leq m \leq n/2$ for some sufficiently large $C_{A,\eps}$ depending on $A,\eps$.
\end{proposition}

Assuming Proposition \ref{mono-prop}, we conclude from \eqref{qma} and a (forwards) induction on $m$ that $Q_m \leq C_{A,\eps}^A \ll_A 1$ for all $1 \leq m \leq n/2$, which gives \eqref{qla}.  This in turn implies Proposition \ref{jeo-prop}, and hence Proposition \ref{key}, Proposition \ref{f-decay}, and Theorem \ref{main}.

It remains to establish Proposition \eqref{mono-prop}.  Let $C_{A,\eps} \leq m \leq n/2$ for some sufficiently large $C_{A,\eps}$.  It suffices to show that
\begin{equation}\label{haha}
 Q(j,l) \leq m^{-A} Q_{m-1}
\end{equation}
whenever $j = \lfloor n/2 \rfloor - m$ and $l \in \Z$.  Note from \eqref{qm-def} that we immediately obtain $Q(j,l) \leq m^{-A} Q_m$, but to be able to use $Q_{m-1}$ instead of $Q_m$ we will apply \eqref{recurse} at least once, in order to estimate $Q(j,l)$ in terms of other values $Q(j',l')$ of $Q$ with $j' > j$.  This causes a degradation in the $m^{-A}$ term, even when $m$ is large; to overcome this loss we need to ensure that (with high probability) the two-dimensional renewal process visits a sufficient number of white points before we use $Q_{m-1}$ to bound the resulting expression.  This is of course consistent with the interpretation of \eqref{jeo} as an assertion that the renewal process encounters plenty of white points.

We divide the proof of \eqref{haha} into three cases.  Let ${\mathcal T}$ be the family of triangles from Lemma \ref{black}.

\textbf{Case 1: $(j,l) \in W$.}  This is the easiest case, as one can immediately get a gain from the white point $(j,l)$.  From \eqref{recurse} we have
$$ Q(j,l) = \exp( - \eps^3 ) \E Q((j,l) + \Hold).$$
For any $(j',l') \in (\N+1) \times \Z$, we have from \eqref{qm-def} (applied with $m$ replaced by $m-1$) that
$$ Q((j,l) + (j',l')) \leq \max( \lfloor n/2\rfloor -j - j', 1)^{-A} Q_{m-1} =
\max( m - j', 1)^{-A} Q_{m-1}$$
since $j+j' \geq j+1 = \lfloor n/2 \rfloor - (m-1)$.  Replacing $(j',l')$ by $\Hold$ (so that $j'$ has the distribution of $\Geom(4)$) and taking expectations, we conclude that
$$ Q(j,l) \leq \exp( - \eps^3 ) Q_{m-1} \E \max( m - \Geom(4), 1)^{-A}.$$
We can bound
\begin{equation}\label{magi}
 \max(m-r,1)^{-1} \leq m^{-1} \exp\left( O\left( \frac{r\log m}{m}\right ) \right)
\end{equation}
for any $r \in \N+1$; indeed this bound is trivial for $r \geq m$, and for $r < m$ one can use the concave nature of $x \mapsto \log(1-x)$ for $0 < x < 1$ to conclude that
$$ \frac{\log\left(1-\frac{r}{m}\right)}{r/m} \geq \frac{\log \left(1 - \frac{m-1}{m}\right)}{(m-1)/m}$$
which rearranges to give the stated bound.   Replacing $r$ by $\Geom(4)$ and raising to the $A^{\mathrm{th}}$ power, we obtain
$$ Q(j,l) \leq \exp( - \eps^3 ) m^{-A} Q_{m-1} \E \exp\left( O\left( \frac{A\log m}{m} \Geom(4) \right) \right).$$
For $m$ large enough depending on $A,\eps$, we then have
\begin{equation}\label{qjl}
 Q(j,l) \leq \exp( - \eps^3/2 ) m^{-A} Q_{m-1}
\end{equation}
which gives \eqref{haha} in this case (with some room to spare).

\textbf{Case 2: $(j,l) \in \Delta$ for some triangle $\Delta \in {\mathcal T}$, and $l \geq l_\Delta - \frac{m}{\log^2 m}$.}   This case is slightly harder than the preceding one, as one has to walk randomly through the triangle $\Delta$ before one has a good chance to encounter a white point, but because this portion of the walk is relatively short, the degradation of the weight $m^{-A}$ during this portion will be negligible.

We turn to the details. Set $s \coloneqq l_\Delta - l$, thus $0 \leq s \leq \frac{m}{\log^2 m}$.  Let $\v_1,\v_2,\dots$ be iid copies of $\Hold$, write $\v_k = (\j_k, \l_k)$ for each $k$ with the usual summation notations \eqref{sum}, and define the first passage time $\k \in \N+1$ to be the least positive integer such that
\begin{equation}\label{lam}
 \l_{[1,\k]} > s.
\end{equation}
This is a finite random variable since the $\l_k$ are all positive integers.  Heuristially, $\k$ represents the time in which the sequence first exits the triangle $\Delta$, assuming that this exit occurs on the top edge of the triangle.  It is in principle possible for the sequence to instead exit $\Delta$ through the hypotenuse of the triangle, in which case $\k$ will be somewhat larger than the first exit time; however, as we shall see below, the Chernoff bound in Lemma \ref{stop} can be used to show that the former scenario will occur with probability $\gg 1$, which will be sufficient for the purposes of establishing \eqref{haha} in this case.

By iterating \eqref{recurse} appropriately (or using \eqref{qjl-def}), we have the identity
\begin{equation}\label{litera}
 Q(j,l) = \E \left[ \exp\left( - \eps^3 \sum_{i=0}^{\k-1} 1_W((j,l) + \v_{[1,i]}) \right) Q((j,l) + \v_{[1,\k]}) \right]
\end{equation}
and hence by \eqref{qm-def}
$$ Q(j,l) \leq  Q_{m-1} \E \left[ \exp\left( - \frac{\eps^3}{2} 1_W((j,l) + \v_{[1,\k]}) \right) \max(m - \j_{[1,\k]},1)^{-A}\right]$$
which by \eqref{magi} gives
$$ Q(j,l) \leq m^{-A} Q_{m-1} \E \exp\left( - \frac{\eps^3}{2} 1_W((j,l) + \v_{[1,\k]})\right) \exp\left( O\left( \frac{A \log m}{m} \j_{[1,\k]} \right)\right).$$
To prove \eqref{haha} in this case, it thus suffices to show that
\begin{equation}\label{loko}
\E \exp\left( - \frac{\eps^3}{2} 1_W((j,l) + \v_{[1,\k]}) \right) \exp\left( O\left( \frac{A \log m}{m} \j_{[1,\k]} \right)\right) \leq 1.
\end{equation}
Since $\exp(-\eps^3/2) \leq 1 - \eps^3/4$, we can upper bound the left-hand side by
\begin{equation}\label{loko-2}
 \E \exp\left( O\left( \frac{A\log m}{m} \j_{[1,\k]} \right) \right) - \frac{\eps^3}{4} \P( (j,l) + \v_{[1,\k]} \in W ).
\end{equation}

We begin by controlling the first term on the right-hand side of \eqref{loko-2}.
By definition, the first passage location $(j,l) + \v_{[1,\k]}$ takes values in the region $\{ (j',l') \in \Z^2: j' > j, l' > l_\Delta \}$.  From Lemma \ref{stop} we have
\begin{equation}\label{ston}
\P( (j,l) + \v_{[1,\k]} = (j',l') ) \ll \frac{e^{-c(l'-l_\Delta)}}{(1+s)^{1/2} } G_{1+s}\left(c(j'-j - \frac{s}{4}) \right).
\end{equation}
Summing in $l'$, we conclude that
$$\P( \j_{[1,\k]} = j'-j ) \ll (1+s)^{-1/2} G_{1+s}\left( c(j'-j - \frac{s}{4}) \right)$$
for any $j'$; informally, $\j_{[1,\k]}$ is behaving like a Gaussian random variable centred at $s/4$ with standard deviation $\asymp (1+s)^{1/2}$.  In particular, because of the hypothesis $s \leq \frac{m}{\log^2 m}$, we have
$$\P( \j_{[1,\k]} = r ) \ll  \exp( - |r| )$$
when $r > \frac{m}{\log^2 m}$ (say).  With our hypotheses $s \leq \frac{m}{\log^2 m}$ and $m \geq C_{A,\eps}$, the quantity $\frac{A \log m}{m}$ is much smaller than $1$, and by using the above bound to control the contribution when $\j_{[1,\k]} > \frac{m}{\log^2 m}$ we have
\begin{equation}\label{loko-3}
\begin{split}
\E \exp\left( O\left( \frac{A\log m}{m} \j_{[1,\k]} \right) \right)
&\leq  \E \exp\left( O\left( \frac{A\log m}{m} \frac{m}{\log^2 m} \right) \right) + O\left( \exp\left( - c \frac{m}{\log^2 m} \right) \right)\\
= 1 + O\left( \frac{A}{\log m} \right).
\end{split}
\end{equation}
Now we turn attention to the second term on the right-hand side of \eqref{loko-2}.
Using \eqref{ston} to handle all points $(j',l')$ outside the region $l' = l_\Delta+O(1)$ and $j' = j + \frac{s}{4} + O( (1+s)^{1/2} )$, we have
\begin{equation}\label{hash}
 \P\left( (j,l) + \v_{[1,\k]} = \left(j+\frac{s}{4} + O((1+s)^{1/2}),l_\Delta + O(1)\right) \right) \gg 1
\end{equation}
for a suitable choice of implied constants in the $O$-notation that is independent of $\eps$ (cf. \eqref{rough}).
On the other hand, since $(j,l) \in \Delta$ and $s = l_\Delta - l$, we have from \eqref{ts} that
$$ 0 \leq (j-j_\Delta) \log 9 \leq s_\Delta - s \log 2$$
and thus (since $0 < \frac{1}{4} \log 9 < \log 2$) one has
$$ -O(1) \leq (j'-j_\Delta ) \log 9 \leq s_\Delta + O(1)$$
whenever $j' = j + \frac{s}{4} + O((1+s)^{1/2})$, with the implied constants independent of $\eps$.  We conclude that with probability $\gg 1$, the first passage location $(j,l) + \v_{[1,\k]}$ lies outside of $\Delta$, but at a distance $O(1)$ from $\Delta$, hence is white by Lemma \ref{black}.  We conclude that
\begin{equation}\label{loko-4}
 \P( (j,l) + \v_{[1,\k]} \in W ) \gg 1
\end{equation}
and \eqref{haha} (and hence \eqref{loko}) now follows from \eqref{loko-2}, \eqref{loko-3}, \eqref{loko-4} since $m \geq C_{A,\eps}$.

\textbf{Case 3: $(j,l) \in \Delta$ for some triangle $\Delta \in {\mathcal T}$, and $l < l_\Delta - \frac{m}{\log^2 m}$.}  This is the most difficult case, as one has to walk so far before exiting $\Delta$ that one needs to encounter multiple white points, not just a single white point, in order to counteract the degradation of the weight $m^{-A}$.  Fortunately, the number of white points one needs to encounter is $O_{A,\eps}(1)$, and we will be able to locate such a number of white points on average for $m$ large enough.

We will need a large constant $P$ (much larger than $A$ or $1/\eps$, but much smaller than $m$) depending on $A,\eps$ to be chosen later; the implied constants in the asymptotic notation below will not depend on $P$ unless otherwise specified.  As before, we set $s \coloneqq l_\Delta - l$, so now $s > \frac{m}{\log^2 m}$.  From \eqref{ts} we have
$$ (j-j_\Delta) \log 9 + s \log 2 \leq s_\Delta$$
while from Lemma \ref{black} one has $j_\Delta + \frac{s_\Delta}{\log 9} \leq \lfloor \frac{n}{2} \rfloor \leq j+m$, hence
\begin{equation}\label{bound}
 s \leq \frac{\log 9}{\log 2} m.
\end{equation}
We again let $\v_1,\v_2,\dots$ be iid copies of $\Hold$, write $\v_k = (\j_k, \l_k)$ for each $k$, and define the first passage time $\k \in \N+1$ to be the least positive integer such that \eqref{lam} holds.  From \eqref{litera} we have
$$
 Q(j,l) \leq \E Q((j,l) + \v_{[1,\k]}).
$$
Applying \eqref{recurse} we then have
\begin{equation}\label{qjl2}
 Q(j,l) \leq \E \exp\left( - \eps^3 \sum_{p=0}^{P-1} 1_W((j,l) + \v_{[1,\k+p]} ) \right) Q((j,l) + \v_{[1,\k+P]}).
\end{equation}
Applying \eqref{qm-def} to $Q((j,l) + \v_{[1,\k+P]}) = Q(j+\j_{[1,\k+P]}, l+\l_{[1,\k+P]})$, we have
$$ \max( \lfloor n/2 \rfloor - j - \j_{[1,\k+P]}, 1)^A Q((j,l) + \v_{[1,\k+P]}) \leq Q_{m-1}$$
(since $j + \j_{[1,\k+P]} \geq j+1 \geq \lfloor n/2 \rfloor - (m-1)$).  We can rearrange this inequality as
$$ Q((j,l) + \v_{[1,\k+P]}) \leq m^{-A} Q_{m-1} \max\left( 1 - \frac{\j_{[1,\k+P]}}{m}, \frac{1}{m}\right)^{-A};$$
inserting this back into \eqref{qjl2}, we conclude that
$$
 Q(j,l) \leq m^{-A} Q_{m-1} \E \exp\left( - \eps^3 \sum_{p=0}^{P-1} 1_W((j,l) + \v_{[1,\k+p]}) \right) \max\left( 1 - \frac{\j_{[1,\k+P]}}{m}, \frac{1}{m}\right)^{-A}.
$$
Thus, to establish \eqref{haha} in this case, it suffices to show that
\begin{equation}\label{pelota}
 \E \exp\left( - \eps^3 \sum_{p=0}^{P-1} 1_W((j,l) + \v_{[1,\k+p]} )\right) \max\left( 1 - \frac{\j_{[1,\k+P]}}{m}, \frac{1}{m}\right)^{-A} \leq 1.
\end{equation}
Let us first consider the event that $\j_{[1,\k+P]} \geq 0.9 m$.  From Lemma \ref{stop} and the bound \eqref{bound}, we have
$$ \P( \j_{[1,\k]} \geq 0.8 m ) \ll \exp( -c m)$$
(noting that $0.8 > \frac{1}{4} \frac{\log 9}{\log 2}$) while from Lemma \ref{chern} (recalling that the $\j_k$ are iid copies of $\Geom(4)$) we have
$$ \P( \j_{[\k+1,\k+P]} \geq 0.1m ) \ll_P \exp( -c m)$$
and thus by the triangle inequality
$$ \P( \j_{[1,\k+P]} \geq 0.9 m ) \ll_P \exp( -c m).$$
Thus the contribution of this case to \eqref{pelota} is $O_{P,A}(m^A \exp(-cm)) = O_{P,A}(\exp(-cm/2))$.  If instead we have $\j_{[1,\k+P]} < 0.9 m$, then
$$ \max\left( 1 - \frac{\j_{[1,\k+P]}}{m}, \frac{1}{m}\right)^{-A} \leq 10^A.$$
Since $m$ is large compared to $A,P$, to show \eqref{pelota} it thus suffices to show that
\begin{equation}\label{cheap-0}
 \E \exp\left( - \eps^3 \sum_{p=0}^{P-1} 1_W((j,l) + \v_{[1,\k+p]} )\right) \leq 10^{-A-1}.
\end{equation}
Since the left-hand side of \eqref{cheap-0} is at most
$$
\P\left( \sum_{p=0}^{P-1} 1_W((j,l) + \v_{[1,\k+p]} ) \leq \frac{10 A}{\eps^3} \right) + \exp(-10A),$$
it will suffice to establish the bound
\begin{equation}\label{cheap}
\P\left( \sum_{p=0}^{P-1} 1_W((j,l) + \v_{[1,\k+p]} ) \leq \frac{10 A}{\eps^3} \right) \leq 10^{-A-2}
\end{equation}
(say).

Roughly speaking, the estimate \eqref{cheap} asserts that once one exits the large triangle $\Delta$ then one should almost always encounter at least $10A/\eps^3$ white points by a certain time $P = O_{A,\eps}(1)$.

To prove \eqref{cheap}, we introduce another random statistic that measures the number of triangles that one encounters on an infinite two-dimensional renewal process $(j',l'), (j',l') + \v_1, (j',l') + \v_{[1,2]},\dots$, where $(j',l') \in (\N+1) \times \Z$ and $\v_1,\v_2,\dots$ are iid copies of $\Hold$.  (We will eventually set $(j',l') \coloneqq (j,l) + \v_{[1,\k]}$, so that the above renewal process is identical in distribution to $(j,l) + \v_{[1,\k]}$, $(j,l) + \v_{[1,\k+1]}$, $(j,l) + \v_{[1,\k+2]}, \dots$.)

Given an initial point $(j',l') \in (\N+1) \times \Z$, we recursively introduce the \emph{stopping times} $\t_1 = \t_1(j',l'),\dots,\t_\r = \t_{\r(j',l')}(j,l)$ by defining $\t_1$ to be the first natural number (if it exists) for which $(j',l') + \v_{[1,\t_1]}$ lies in a triangle $\mathbf{\Delta}_1 \in {\mathcal T}$, then for each $i>1$, defining $\t_i$ to be the first natural number (if it exists) with $l' + \l_{[1,\t_i]} > l_{\mathbf{\Delta}_{i-1}}$ and $(j',l') + \v_{[1,\t_i]}$ lies in a triangle $\mathbf{\Delta}_i \in {\mathcal T}$.  We set $\r = \r(j',l')$ to be the number of stopping times that can be constructed in this fashion (thus, there are no natural numbers $k$ with $l + \l_{[1,k]} > l_{\mathbf{\Delta}_\r}$ and $(j',l') + \v_{[1,k]}$ black).  Note that $\r$ is finite since the process $(j',l')+\v_{[1,k]}$ eventually exits the strip $[n/2] \times \Z$ when $k$ is large enough, at which point it no longer encounters any black triangles.

The key estimate relating $\r$ with the expression in \eqref{cheap} is then

\begin{lemma}[Many triangles usually implies many white points]\label{rip}  Let $\v_1,\v_2,\dots$ be iid copies of $\Hold$.  Then for any $(j',l') \in (\N+1) \times \Z$ and any positive integer $R$, we have
\begin{equation}\label{que}
 \E 1_{R \leq \r} \exp\left( - \sum_{p=1}^{\t_{\min(\r,R)}} 1_W((j',l') + \v_{[1,p]}) + \eps R \right) \leq \exp(\eps),
\end{equation}
where $0 < \eps < 1/100$ is the sufficiently small absolute constant that has been in use throughout this section.
\end{lemma}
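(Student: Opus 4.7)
Plan.  We prove Lemma~\ref{rip} by induction on $R \geq 1$, uniformly in the starting point $(j,l) \in (\N+1) \times \Z$.  Write $\Phi_R(j,l)$ for the expectation on the left-hand side of \eqref{que}.  The base case $R = 1$ is immediate: on $\{\r = 0\}$ the integrand equals $1$, while on $\{\r \geq 1\}$ it is $\exp(\eps - \sum_{p=1}^{\t_1} 1_W) \leq e^\eps$, so $\Phi_1(j,l) \leq \P(\r = 0) + e^\eps\, \P(\r \geq 1) \leq e^\eps$.

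For the inductive step $R \geq 2$, assume $\Phi_{R-1}(j',l') \leq e^\eps$ for all $(j',l')$ and decompose according to the value of $\r$.  On $\{\r = 0\}$ and $\{\r = 1\}$ (where $M := \min(\r, R) \leq 1$) the same direct bound gives contributions at most $\P(\r = 0)$ and $e^\eps\, \P(\r = 1)$ respectively.  On $\{\r \geq 2\}$ (where $M \geq 2$), let $\sigma_1$ be the smallest $k > \t_1$ with $l + \l_{[1,k]} > l_{\mathbf{\Delta}_1}$; since $\l_k \geq 3$ a.s., $\sigma_1$ is a.s.\ finite, and $\t_1 < \sigma_1 \leq \t_2 \leq \t_M$.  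Setting $(j^\#, l^\#) := (j,l) + \v_{[1, \sigma_1]}$ and invoking the strong Markov property at $\sigma_1$, the continuation of the walk is an independent fresh renewal from $(j^\#, l^\#)$; since the $l$-coordinates after $\sigma_1$ already exceed $l_{\mathbf{\Delta}_1}$, the $l$-constraints in the definitions of $\t_2, \t_3, \dots$ become automatic, and the future dynamics is identical in law to a fresh instance of the original process started at $(j^\#, l^\#)$ with $R - 1$ rounds remaining.  Writing $Y_1 := 1_W((j^\#, l^\#))$ and noting that the positions at times $\t_1, \t_1 + 1, \dots, \sigma_1 - 1$ all lie in $\mathbf{\Delta}_1 \subset B$, the split $\sum_{p=1}^{\t_M} 1_W = \sum_{p=1}^{\sigma_1} 1_W + \sum_{p = \sigma_1 + 1}^{\t_M} 1_W$ together with the inductive hypothesis yields
$$
\E\bigl[1_{\r \geq 2}\, \exp(\eps M - \textstyle\sum_{p=1}^{\t_M} 1_W)\bigr] \leq e^\eps\, \E\bigl[1_{\r \geq 2}\, e^{-Y_1}\, \Phi_{R-1}(j^\#, l^\#)\bigr] \leq e^{2\eps}\, \E\bigl[1_{\r \geq 2}\, \E[e^{-Y_1} \mid \mathcal{F}_{\t_1}]\bigr].
$$

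The geometric content of the argument is the uniform estimate $\P\!\left((j,l) + \v_{[1, \sigma_1]} \in W \mid \mathcal{F}_{\t_1},\, \r \geq 1\right) \geq c_0$ for some absolute $c_0 > 0$, proved exactly as in Case~2 of the preceding proof: Lemma~\ref{stop} applied with $s := l_{\mathbf{\Delta}_1} - (l + \l_{[1, \t_1]})$ places $(j^\#, l^\#)$ with probability $\gg 1$ near $(j + \j_{[1, \t_1]} + s/4,\, l_{\mathbf{\Delta}_1})$, with $O(\sqrt{1 + s})$ horizontal spread and $O(1)$ vertical spread, and the inequality $\tfrac14 \log 9 < \log 2$ together with the $\gg \log(1/\eps)$ triangle-separation of Lemma~\ref{black} forces any such position to lie in $W$.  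This yields $\E[\exp(-Y_1) \mid \mathcal{F}_{\t_1}, \r \geq 1] \leq \gamma := 1 - c_0(1 - e^{-1}) < 1$, so combining the three cases,
$$
\Phi_R(j,l) \leq \P(\r = 0) + e^\eps\, \P(\r = 1) + e^{2\eps} \gamma\, \P(\r \geq 2).
$$
Taking $\eps$ small enough that $e^\eps \gamma \leq 1$ (an absolute smallness condition which we may absorb into our original choice of $\eps$), the right-hand side is at most $\P(\r = 0) + e^\eps\bigl(1 - \P(\r = 0)\bigr) \leq e^\eps$, closing the induction.

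The principal obstacle is to establish the lower bound $c_0$ on the white-exit probability \emph{uniformly} across triangles $\mathbf{\Delta}_1$ of all sizes, including triangles of near-maximal size $\Theta(n)$ with the walk starting deep inside so that $s$ is also of order $\Theta(n)$.  In this regime the exit location has $O(\sqrt{s})$ horizontal fluctuation, and one must use the strict inequality $\tfrac14 \log 9 < \log 2$ to verify that these fluctuations do not push the exit past the horizontal extent of $\mathbf{\Delta}_1$ at height $l_{\mathbf{\Delta}_1}$, so that the exit remains within Euclidean distance $O(1)$ of $\mathbf{\Delta}_1$ and the triangle-separation estimate of Lemma~\ref{black} forces it to be white.
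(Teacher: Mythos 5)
The proposal is not correct: there is a genuine gap in the handling of the event $\{\r\geq 2\}$.

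The problem is that $1_{\r\geq 2}$ is not $\mathcal{F}_{\sigma_1}$-measurable. Whether $\r\geq 2$ depends on the future of the renewal process after time $\sigma_1$, not just on $\v_1,\dots,\v_{\sigma_1}$. When you invoke the strong Markov property at $\sigma_1$ and write
\[
\E\bigl[1_{\r\geq 2}\,e^{-Y_1}\cdot(\text{shifted factor})\bigr]\ \leq\ e^\eps\,\E\bigl[1_{\r\geq 2}\,e^{-Y_1}\,\Phi_{R-1}(j^\#,l^\#)\bigr],
\]
you are implicitly pulling $1_{\r\geq 2}$ out of the conditional expectation at $\mathcal{F}_{\sigma_1}$, which is not valid. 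Only $1_{\r\geq 1}$ (equivalently $\{\sigma_1<\infty\}$) is $\mathcal{F}_{\sigma_1}$-measurable, and on that event $\{\r\geq2\}$ coincides with the event that the fresh renewal started at $(j^\#,l^\#)$ encounters a triangle, i.e.\ $\{\r'\geq 1\}$ for the shifted process. The correct tower manipulation therefore yields
\[
\E\bigl[1_{\r\geq 2}\,\exp(\eps M-\textstyle\sum_{p=1}^{\t_M}1_W)\bigr] \ \leq\ e^{2\eps}\,\E\bigl[1_{\r\geq 1}\,e^{-Y_1}\bigr] \ \leq\ e^{2\eps}\,\gamma\,\P(\r\geq 1),
\]
with $\P(\r\geq 1)$, not $\P(\r\geq 2)$. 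Once this is fixed, your final combining step
\[
\P(\r=0)+e^\eps\P(\r=1)+e^{2\eps}\gamma\P(\r\geq 1)\ \leq\ e^\eps
\]
fails: the coefficient of $\P(\r=1)$ on the left is $e^\eps+e^{2\eps}\gamma>e^\eps$ for any $\gamma>0$. To salvage your $\r=0/\r=1/\r\geq2$ split, you would need the white-exit estimate \emph{conditioned on} $\{\r\geq 2\}$, namely $\P\bigl((j^\#,l^\#)\in W\mid\r\geq 2\bigr)\gg 1$; but this does not follow from the unconditional estimate, because $\{\r\geq 2\}$ anti-correlates with $\{Y_1=1\}$ (a black exit point forces $\r\geq 2$ with probability one, whereas a white exit point does not), and you do not supply any argument controlling this.

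The paper sidesteps the issue entirely by never separating off $\{\r=1\}$: it treats all of $\{\r\neq 0\}$ at once, conditions at $\k_1$, and uses the trivial combining $\P(\r=0)+e^\eps\P(\r\neq 0)\leq e^\eps$, which asks nothing of $\P(\r\geq2)$. It then proves the needed white-exit probability $\P\bigl(\r\neq 0\wedge(j,l)+\v_{[1,\k_1]}\in W\bigr)\gg\P(\r\neq 0)$ by decomposing $\{\r\neq 0\}$ into the disjoint $\mathcal{F}_{\k_1}$-measurable events $E_{p,\Delta_1,(j',l')}$ and applying \eqref{loko-4} on each; this decomposition is what makes the stopping-time conditioning clean, and your proposal omits it. (Two secondary remarks: your parenthetical claim that the positions at times $\t_1,\dots,\sigma_1-1$ all lie in $\mathbf{\Delta}_1$ is false, since the walk can temporarily cross the diagonal side of the triangle before $\sigma_1$, although this particular observation is not load-bearing in your argument; and the constraint $e^\eps\gamma\leq 1$ you impose introduces a dependence of $\eps$ on the implied constant $c_0$ that the paper's trivial combining does not need, though this alone would be harmless.)
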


Informally the estimate \eqref{que} asserts that when $\r$ is large (so that the renewal process $(j',l'), (j',l') + \v_1, (j',l') + \v_{[1,2]},\dots$ passes through many different triangles), then the quantity $\sum_{p=1}^{\t_{\min(\r,R)}} 1_W((j',l') + \v_{[1,p]})$ is usually also large, implying that the same renewal process also visits many white points.  This is basically due to the separation between triangles that is given by Lemma \ref{black}.

\begin{proof}  Denote the quantity on the left-hand side of \eqref{que} by $Z( (j',l'), R )$.
We induct on $R$.  The case $R=1$ is trivial, so suppose $R \geq 2$ and that we have already established that
\begin{equation}\label{z-bound}
 Z((j'',l''), R-1) \leq \exp(\eps)
\end{equation}
for all $(j'',l'') \in (\N+1) \times \Z$. If $\r=0$, the expression vanishes.
 Suppose instead that $\r \neq 0$, so that the first stopping time $\t_1$ and triangle $\mathbf{\Delta}_1$ exists.  Let $\k_1$ be the first natural number for which $l' + \l_{[1,\k_1]} > l_{\Delta_1}$; then $\k_1$ is well-defined (since we have an infinite number of $\l_k$, all of which are at least $2$) and $\k_1 > \t_1$.  The conditional expectation of $\exp( - \sum_{p=1}^{\t_{\min(\r,R)}} 1_W((j',l') + \v_{[1,p]} ) + \eps \min(\r,R))$ relative to the random variables $\v_1,\dots,\v_{\k_1}$ is equal to
$$  \exp\left( -  \sum_{p=1}^{\k_1} 1_W((j',l') + \v_{[1,p]} ) + \eps \right) Z( 1_W((j',l') + \v_{[1,\k_1]}, R-1)$$
which we can upper bound using the inductive hypothesis \eqref{z-bound} as
$$  \exp\left( -  1_W((j',l') + \v_{[1,\k_1]} ) + 2\eps \right).$$
We thus obtain the inequality
$$
Z( (j',l'), R) \leq \exp(2\eps) \E 1_{\r \neq 0} \exp( - 1_W((j',l') + \v_{[1,\k_1]} ) )
$$
so to close the induction it suffices to show that
$$ \E 1_{\r \neq 0} \exp( - 1_W((j',l') + \v_{[1,\k_1]} ) ) \leq \exp(-\eps) \P( \r \neq 0).$$
Since the left-hand side is equal to
$$ \P( \r \neq 0 ) - (1-1/e) \P( (\r \neq 0) \wedge ((j',l') + \v_{[1,\k_1]} \in W) ) $$
and $\eps>0$ is a sufficiently small absolute constant, it will thus suffice to establish the bound
$$ \P( (\r \neq 0) \wedge ((j',l') + \v_{[1,\k_1]} \in W) ) \gg \P( \r \neq 0 ).$$
For each $p \in \N+1$, triangle $\Delta_1 \in {\mathcal T}$, and $(j'',l'') \in \Delta_1$, let $E_{p,\Delta_1,(j'',l'')}$ denote the event that $(j',l') + \v_{[1,p]} = (j'',l'')$, and $(j',l') + \v_{[1,p']} \in W$ for all $1 \leq p' < p$.  Observe that the event $\r \neq 0$ is the disjoint union of the events $E_{p,\Delta_1,(j'',l'')}$.  It therefore suffices to show that
\begin{equation}\label{econ}
 \P\left( E_{p,\Delta_1,(j'',l'')} \wedge ((j',l') + \v_{[1,\k_1]} \in W) \right) \gg \P( E_{p,\Delta_1,(j'',l'')} ).
\end{equation}
We may of course assume that the event $E_{p,\Delta_1,(j'',l'')}$ occurs with non-zero probability. Conditioning to this event, we see that $(j',l') + \v_{[1,\k_1]}$ has the same distribution as (the unconditioned random variable) $(j'',l'') + \v_{[1,\k'']}$, where the first passage time $\k''$ is the first natural number for which $l'' + \l_{[1,\k'']} > l_{\Delta_1}$.  By repeating the proof of \eqref{loko-4}, one has
$$
 \P( (j'',l'') + \v_{[1,\k'']} \in W | E_{p,\Delta_1,(j'',l'')} ) \gg 1
$$
giving \eqref{econ}.  This establishes the lemma.
\end{proof}

To use this bound we need to show that the renewal process $(j,l)+\v_[1,\k], (j,l) + \v_[1,\k+1], (j,l) + \v_{[1,\k+2]},\dots$ either passes through many white points, or through many triangles.  This will be established via a probabilistic upper bound on the size $s_\Delta$ of the triangles encountered.  The key lemma in this regard is

\begin{lemma}[Large triangles are rarely encountered shortly after a lengthy crossing]\label{77}  Let $(j,l)$ be an element of a black triangle $\Delta$ with $s \coloneqq l_\Delta - l$ obeying $s > \frac{m}{\log^2 m}$ (where we recall $m = \lfloor n/2\rfloor - j$), and let $\k$ be the first passage time associated to $s$ defined in Lemma \ref{stop}.  Let $p \in \N$ and $1 \leq s' \leq m^{0.4}$. Let $E_{p,s'}$ denote the event that $(j,l) + \v_{[1,\k+p]}$ lies in a triangle $\Delta' \in {\mathcal T}$ of size $s_{\Delta'} \geq s'$.  Then
$$ \P( E_{p,s'} ) \ll A^2 \frac{1+p}{s'} + \exp( - c A^2 (1+p) ).$$
\end{lemma}

As in the rest of this section, we stress that the implied constants in our asymptotic notation are uniform in $n$ and $\xi$.

\begin{proof} We can assume that
\begin{equation}\label{sbig}
s' \geq C A^2 (1+p)
\end{equation}
for a large constant $C$, since the claim is trivial otherwise.

 From Lemma \ref{stop} we have \eqref{ston} as before, so on summing in $j'$ we have
$$
\P( l + \l_{[1,k]} = l' ) \ll \exp( - c (l'-l_\Delta) ) $$
and thus
$$
\P( l + \l_{[1,k]} \geq l_\Delta + A^2 (1+p) ) \ll \exp( - c A^2 (1+p) ).$$
Similarly, from Lemma \ref{chern} one has
$$
\P( \l_{[\k+1,\k+p]} \geq A^2 (1+p) ) \ll \exp( - c A^2 (1+p) )$$
and thus
$$
\P( l + \l_{[1,\k+p]} \geq l_\Delta + 2A^2 (1+p) ) \ll \exp( - c A^2 (1+p) ).$$
In a similar spirit, from \eqref{ston} and summing in $l'$ one has
$$
\P( j + \j_{[1,\k]} = j' ) \ll s^{-1/2} G_{1+s}\left( c(j'-j - \frac{s}{4}) \right)$$
so in particular
$$
\P\left( \left|\j_{[1,\k]} - \frac{s}{4}\right| \geq s^{0.6} \right) \ll \exp( - c s^{0.2} ) \ll A^2 \frac{1+p}{s'}$$
from the upper bound on $s'$.  From Lemma \ref{chern} we also have
$$
\P( |\j_{[\k+1,\k+p]}| \geq s^{0.6} ) \ll \exp( - c s^{0.6} ) \ll A^2 \frac{1+p}{s'}$$
and hence
$$
\P\left( \left|\j_{[1,\k+p]} - \frac{s}{4}\right| \geq 2s^{0.6} \right) \ll A^2 \frac{1+p}{s'}$$
Thus, if $E'$ denotes the event that $l + \l_{[1,\k+p]} \geq l_\Delta + 2A^2 (1+p)$ or $|\j_{[1,\k+p]} - \frac{s}{4}| \geq 2s^{0.6}$, then
\begin{equation}\label{pang}
\P( E') \ll A^2 \frac{1+p}{s'} + \exp( - c A^2 (1+p) ).
\end{equation}
We will devote the rest of the proof to establishing the complementary estimate
\begin{equation}\label{pang-2}
\P(  E_{p,s'} \wedge \bar{E'}) \ll A^2 \frac{1+p}{s'}
\end{equation}
which together with \eqref{pang} implies the lemma.

Suppose now that we are outside the event $E'$, and that  $(j,l) + \v_{[1,\k+p]}$ lies in a triangle $\Delta'$, thus
\begin{equation}\label{bang-1}
l + \l_{[1,\k+p]} = l_\Delta + O( A^2 (1+p) )
\end{equation}
and
\begin{equation}\label{bang-2}
\j_{[1,\k+p]} = \frac{s}{4} + O(s^{0.6}) = \frac{s}{4} + O(m^{0.6})
\end{equation}
thanks to \eqref{bound}.
From \eqref{ts} we then have
\begin{equation}\label{jjd}
0 \leq j+\j_{[1,\k+p]}-j_{\Delta'} \leq \frac{1}{\log 9} s_{\Delta'} - \frac{\log 2}{\log 9} (l_{\Delta'}-l-\l_{[1,\k+p]}).
\end{equation}
Suppose that the lower tip of $\Delta'$ lies well below the upper edge of $\Delta$ in the sense that
$$ l_{\Delta'} - \frac{s_{\Delta'}}{\log 2} \leq l_\Delta - 10.$$
Then by \eqref{bang-1} we can find an integer $j' = j+\j_{[1,\k+p]} + O( A^2 (1+p) )$ such that $j' \geq j_{\Delta'}$ and
$$0 \leq j'-j_{\Delta'} \leq \frac{1}{\log 9} s_{\Delta'} - \frac{\log 2}{\log 9} (l_{\Delta'}-l_\Delta).$$
In other words, $(j',l_\Delta) \in \Delta'$. But by \eqref{bang-2} we have
$$ j' = j + \frac{s}{4} + O(m^{0.6}) + O( A^2 (1+p) ) = j + \frac{s}{4} + O(m^{0.6}).$$
From \eqref{ts} we have
$$ 0 \leq (j-j_\Delta) \log 9 \leq s_\Delta - s \log 2$$
and hence (since $s \geq \frac{m}{\log^2 m}$ and $\frac{1}{4} \log 9 < \log 2$)
$$ 0 \leq (j'-j_\Delta) \log 9 \leq s_\Delta$$
Thus $(j',l_\Delta) \in \Delta$.  Thus $\Delta$ and $\Delta'$ intersect, which by Lemma \ref{black} forces $\Delta=\Delta'$, which is absurd since $(j,l) + \v_{[1,\k+p]}$ lies in $\Delta'$ but not $\Delta$ (the $l$ coordinate is larger than $l_\Delta$).  We conclude that
$$ l_{\Delta'} - \frac{s_{\Delta'}}{\log 2} > l_\Delta - 10.$$
On the other hand, from \eqref{ts} we have
$$ l_{\Delta'} - \frac{s_{\Delta'}}{\log 2} \leq l + \l_{[1,\k+p]}$$
hence by \eqref{bang-1} we have
\begin{equation}\label{joel}
 l_{\Delta'} - \frac{s_{\Delta'}}{\log 2} = l_\Delta + O( A^2 (1+p) ).
\end{equation}
From \eqref{jjd}, \eqref{joel}, \eqref{bang-1} we then have
\begin{align*}
 0 \leq j + \j_{[1,\k+p]} - j_{\Delta'} &\leq \frac{1}{\log 9} s_{\Delta'} - \frac{\log 2}{\log 9} (l_{\Delta'} - l - \l_{[1,\k+p]})  \\
&= -\frac{\log 2}{\log 9} (l_\Delta - l - \l_{[1,\k+p]} + O(A^2(1+p)) ) \\
&= O( A^2 (1+p) ).
\end{align*}
so that
$$ j + \j_{[1,\k+p]} = j_{\Delta'} + O( A^2 (1+p) ).$$
Thus, outside the event $E'$, the event that $(j,l) + \v_{[1,\k+p]}$ lies in a triangle $\Delta'$ can only occur if $(j,l) + \v_{[1,\k+p]}$ lies within a distance $O(A^2(1+p))$ of the point $(j_{\Delta'}, l_\Delta)$.

Now suppose we have two distinct triangles $\Delta', \Delta''$ in ${\mathcal T}$ obeying \eqref{joel}, with $s_{\Delta'}, s_{\Delta''} \geq s'$ with $j_{\Delta'} \leq j_{\Delta''}$.  Set $l_* \coloneqq l_\Delta + \lfloor s'/2 \rfloor$, and observe from \eqref{ts} that $(j_*,l_*) \in \Delta'$ whenever $j_*$ lies in the interval
$$ j_{\Delta'} \leq j_* \leq j_{\Delta'} + \frac{1}{\log 9} s_{\Delta'} - \frac{\log 2}{\log 9} (l_{\Delta'} - l_*)$$
and similarly $(j_*,l_*) \in \Delta''$ whenever
$$ j_{\Delta''} \leq j_* \leq j_{\Delta''} + \frac{1}{\log 9} s_{\Delta''} - \frac{\log 2}{\log 9} (l_{\Delta''} - l_*).$$
By Lemma \ref{black}, these two intervals cannot have any integer point in common, thus
$$ j_{\Delta'} + \frac{1}{\log 9} s_{\Delta'} - \frac{\log 2}{\log 9} (l_{\Delta'} - l_*) \leq j_{\Delta''}.$$
Applying \eqref{joel} and the definition of $l_*$, we conclude that
$$ j_{\Delta'} + \frac{1}{2} \frac{\log 2}{\log 9} s' + O( A^2 (1+p) )\leq j_{\Delta''}$$
and hence by \eqref{sbig}
$$ j_{\Delta''} - j_{\Delta'} \gg s'.$$
We conclude that for the triangles $\Delta'$ in ${\mathcal T}$ obeying \eqref{joel} with $s_{\Delta'} \geq s'$, the points $(j_{\Delta'}, l_\Delta)$ are $\gg s'$-separated.  Let $\Sigma$ denote the collection of such points, thus $\Sigma$ is a $\gg s'$-separated set of points, and outside of the event $E'$, $(j,l) + \v_{[1,\k+p]}$ can only occur in a triangle $\Delta'$ with $s_{\Delta'} \geq s'$ if
$$ \dist( (j,l) + \v_{[1,\k+p]}, \Sigma ) \ll A^2(1+p).$$
We conclude that
$$ \P( E_{p,s'} \wedge \bar{E'} ) \ll
\P\left( \dist( (j,l) + \v_{[1,\k+p]}, \Sigma ) \ll A^2(1+p) \right).$$

From \eqref{ston} we see that
\begin{align*}
&\P\left( (j,l) + \v_{[1,\k+p]} = (j_{\Delta'},l_\Delta) + O( A^2(1+p) ) \right)\\
&\quad\quad\ll \frac{A^2(1+p)}{s^{1/2}}
G_{1+s}\left( c(j_{\Delta'}-j - \frac{s}{4}) \right)\\
&\quad\quad\ll \frac{A^2(1+p)}{s'} \sum_{j' = j_{\Delta'} + O(s')}
\frac{1}{s^{1/2}} G_{1+s}\left( c(j'-j - \frac{s}{4}) \right).
\end{align*}
Summing and using the $\gg s'$-separated nature of $\Sigma$, we conclude that
\begin{align*}
\P\left( \dist( (j,l) + \v_{[1,\k+p]}, \Sigma ) \ll A^2(1+p) \right)
&\ll \frac{A^2(1+p)}{s'} \sum_{j' \in \Z}
\frac{1}{s^{1/2}} G_{1+s}\left( c(j'-j - \frac{s}{4}) \right) \\
&\ll \frac{A^2(1+p)}{s'}
\end{align*}
and the claim \eqref{pang-2} follows.
\end{proof}

From Lemma \ref{77} we have
$$ \P( E_{p,4^A (1+p)^3} ) \ll A^2 \frac{1}{4^A (1+p)^2} + \exp( - c A^2 (1+p) )$$
whenever $0 \leq p \leq m^{0.1}$.  Thus by the union bound, if $E_*$ denotes the union of the $E_{p,4^A (1+p)^3}$ for $0 \leq p \leq m^{0.1}$, then
$$ \P( E_*) \ll A^2 4^{-A}.$$
Next, we apply Lemma \ref{rip} with $(j',l') \coloneqq (j,l) + \v_{[1,\k]}$ to conclude that
$$
 \E 1_{R \leq \r} \exp\left( - \sum_{p=1}^{\t_{\min(\r,R)}} 1_W((j,l) + \v_{[1,\k+p]} + \eps R \right) \leq \exp(\eps),$$
where now $\r = \r((j,l) + \v_{[1,\k]})$ and $\t_i = \t_i((j,l) + \v_{[1,\k]})$.  If we then let $F_*$ to be the event that
$$
 \exp\left( - \sum_{p=1}^{\t_{\min(\r,R)}} 1_W((j,l) + \v_{[1,\k+p]} + \eps \min(\r,R) \right) > 10^{A+2} \exp(\eps)$$
then by Markov's inequality we have
$$ \P( F_*) \leq 10^{-A-2}.$$
Outside of the event $F_*$, but assuming $\r \geq R$, we have
$$ \exp\left( - \sum_{p=1}^{\t_{\min(\r,R)}} 1_W((j,l) + \v_{[1,\k+p]} + \eps R \right)  \ll 10^A$$
which implies under these hypotheses that
$$ \sum_{p=1}^{\t_{\min(\r,R)}} 1_W((j,l) + \v_{[1,\k+p]})  \gg \eps R - O( A ).$$
In particular, if we set $R \coloneqq \lfloor A^2 / \eps^4\rfloor$, we have
\begin{equation}\label{pp1}
 \sum_{p=1}^{\t_{R}} 1_W((j,l) + \v_{[1,\k+p]})  \gg \frac{A^2}{\eps^3}
\end{equation}
whenever we lie outside of $F_*$ and $\r \geq R$.

Now suppose we lie outside of both $E_*$ and $F_*$.  To prove \eqref{cheap}, it will now suffice to show the deterministic claim
\begin{equation}\label{pp2}
 \sum_{p=0}^{P-1} 1_W((j,l) + \v_{[1,\k+p]} ) > \frac{10 A}{\eps^3}.
\end{equation}
We argue by contradiction.  Suppose that \eqref{pp2} fails, thus
$$ \sum_{p=0}^{P-1} 1_W((j,l) + \v_{[1,\k+p]} ) \leq \frac{10 A}{\eps^3}.$$
Then the point $(j,l) + \v_{[1,\k+p]}$ is white for at most $10 A/\eps^3$ values of $0 \leq p \leq P-1$, so in particular for $P$ large enough there is $0 \leq p \leq 10A/\eps^3+1 = O_{A,\eps}(1)$ such that $(j,l) + \v_{[1,\k+p]}$ is black. By Lemma \ref{black}, this point lies in a triangle $\Delta' \in {\mathcal T}$.  As we are outside $E_*$,  the event $E_{p,4^A(1+p^3)}$ fails, so we have
$$ s_{\Delta'} < 4^A (1+p)^3.$$
Thus by \eqref{ts}, for $p'$ in the range
$$p + 10 \times 4^A (1+p)^3 < p' \leq P-1,$$
we must have $l + \l_{[1,\k+p']} > l_{\Delta'}$, hence we exit $\Delta'$ (and increment the random variable $\r$).  In particular, if
$$ p + 10 \times 4^A (1+p)^3 + 10 A/\eps^3 + 1 \leq P-1,$$
then we can find
$$ p' \leq p + 10 \times 4^A (1+p)^3 + 10 A/\eps^3 + 1 = O_{p,A,\eps}(1) $$
such that $l + \l_{[1,\k+p']} > l_{\Delta'}$ and $(j,l) + \v_{[1,\k+p]}$ is black (and therefore lies in a new triangle $\Delta''$).  Iterating this $R$ times, we conclude (if $P$ is sufficiently large depending on $A,\eps$) that $\r \geq R$ and that $\t_R \leq P$.  Choosing $P$ large enough so that all the previous arguments are justified, the claim \eqref{pp2} now follows from \eqref{pp1}, giving the required contradiction.  This (finally!) concludes the proof of \eqref{haha}, and hence Proposition \ref{mono-prop}.  As discussed previously, this implies Propositions \ref{jeo-prop}, \ref{key}, \ref{f-decay} and Theorem \ref{main}.

\end{document}